\RequirePackage{ifpdf}
\ifpdf 
\documentclass[pdftex]{IPA}
\else
\documentclass{IPA}
\fi

\usepackage{amsmath}
\usepackage{amsthm} 
\usepackage{amsfonts} 
\usepackage{amssymb} 
\usepackage{graphicx}
\usepackage{appendix}
\usepackage{hyperref}
\usepackage{bbm}
\usepackage{mathrsfs}
\usepackage{pgf,tikz}
\usepackage{mathrsfs}
\usepackage{stmaryrd}
\usepackage{enumitem}
\usepackage[all]{xy}
\usetikzlibrary{arrows}

\DeclareMathAlphabet{\mathpzc}{OT1}{pzc}{m}{it}

\renewcommand{\hat}{\widehat}

\newtheorem{teo}{Theorem}[section]

\newtheorem{claim}[teo]{Claim}

\numberwithin{equation}{section}

\begin{document}
\thispagestyle{empty}

\ShortArticleName{On the splitting of infinitesimal Poisson automorphisms around symplectic leaves}

\ArticleName{On the splitting of infinitesimal Poisson automorphisms around symplectic leaves}

\Author{Eduardo VELASCO-BARRERAS~$^\dag$ and Yury VOROBIEV~$^\ddag$} \AuthorNameForHeading{E. Velasco-Barreras and Yu. Vorobiev}

\Address{Department of Mathematics, University of Sonora\\
Rosales y Blvd. Luis Encinas, 83000 Hermosillo, M\'exico}

\Email{\href{mailto:lalo.velasco@mat.uson.mx}{lalo.velasco@mat.uson.mx}$^\dag$, \href{mailto:yurimv@guaymas.uson.mx}{yurimv@guaymas.uson.mx}$^\ddag$}

\sloppy 

\Abstract{A geometric description of the first Poisson cohomology groups is given in the semilocal context, around (possibly singular)
symplectic leaves. This result is based on the splitting theorems for infinitesimal automorphisms of coupling Poisson structures
which describe the interaction between the tangential and transversal data of the characteristic distributions. As a
consequence, we derive some criteria of vanishing of the first Poisson cohomology groups and apply the general splitting formulas
to some particular classes of Poisson structures associated with singular symplectic foliations.
}

\Keywords{Singular foliations; Symplectic leaves; Poisson cohomology; Coupling Poisson
structures; Infinitesimal automorphisms}

\Classification{53D17; 53C05; 53C12}

\section{Introduction}

This paper is devoted to the study of infinitesimal automorphisms of Poisson manifolds carrying singular symplectic foliations.

Let $(M,\Psi)$ be a Poisson manifold equipped with a Poisson tensor $\Psi \in\mathfrak{X}^{2}(M)$, that is, a bivector field on $M$ satisfying the Jacobi identity $[\Psi,\Psi]=0$. Here $[~,~]$ denotes the Schouten-Nijenhuis bracket for multivector fields on $M$ which induces the Lichnerowicz coboundary operator $\delta_{\Psi}:\mathfrak{X}^{p}(M)\rightarrow\mathfrak{X}^{p+1}(M)$ given by $\delta_{\Psi}(\cdot)=[\Psi,\cdot]$. Recall that the Poisson cohomology of $(M,\Psi)$ is just the cohomology of the cochain complex $(\mathfrak{X}^{\ast}(M),\delta_{\Psi})$ whose spaces are denoted by $H_{\Psi}^{p}(M)$. In particular, $H_{\Psi}^{0}(M)=\operatorname{Casim}(M,\Psi)$ is the space of Casimir functions and the first cohomology group $H_{\Psi}^{1}(M)$ \ is the quotient of the Lie algebra of the infinitesimal Poisson automorphisms (Poisson vector fields)
\[
\operatorname{Poiss}(M,\Psi)=\{Z\in\mathfrak{X}(M)\mid L_{Z}\Psi\equiv[Z,\Psi]=0\}
\]
with respect to the ideal of the Hamiltonian vector fields
\[
\operatorname{Ham}(M,\Psi)=\{X_{F}=\Psi^{\sharp}dF\mid F\in C^{\infty}(M)\}.
\]
Here, $\Psi^{\sharp}:T^{\ast}M\rightarrow TM$ is the induced vector bundle morphism defined by $\langle\beta,\Psi^{\sharp}\alpha\rangle=\Psi(\alpha,\beta)$.

The Hamiltonian vector fields generate the generalized characteristic distribution $\Psi^{\sharp}(T^{\ast}M)\subseteq TM$ which is integrable in the sense of Stefan-Sussmann and gives rise to a singular symplectic foliation $(\mathcal{S},\omega)$ equipped with a leafwise symplectic form $\omega$. The singular situation occurs when the rank of the Poisson tensor $\Psi$ is not locally constant at some points in $M$. In this case, the leafwise symplectic form $\omega$ has a singular behavior, in the sense that it cannot be represented as the pull-back of a global 2-form on $M$.

In the regular case, when $\operatorname{rank}\Psi=\mathrm{const}$ on $M$, one can associate to the regular symplectic foliation $\mathcal{S}$ the cochain complex $(\Gamma(\wedge^{\ast}T^{\ast}\mathcal{S)},d_{\mathcal{S}})$, where $d_{\mathcal{S}}$ is the \emph{foliated exterior differential} for foliated forms. This gives rise to the foliated (or, tangential) de Rham cohomology groups $H_{\operatorname{dR}}^{k}(\mathcal{S})$. Fixing a normal subbundle $\nu(\mathcal{S})\subset TM$ of the regular symplectic foliation $\mathcal{S}$,
\[
TM=T\mathcal{S}\oplus\nu(\mathcal{S}),
\]
we can introduce the subspace $\Gamma_{\mathcal{S}\text{-}\mathrm{pr}}(\nu(\mathcal{S}))$ of all smooth sections $Y$ of $\nu(\mathcal{S})$ which are projectable to the leaf space $M\diagup\mathcal{S}$, that is, $[Y,\Gamma(T\mathcal{S})] \subset \Gamma(T\mathcal{S})$. Then, for the first cohomology group of the regular Poisson manifold, we have the following splitting formula into tangential and transversal components \cite{Va-90,VK-88}:
\begin{equation}\label{F1}
H_{\Psi}^{1}(M) \cong H_{\mathrm{dR}}^{1}(\mathcal{S})\oplus\left\{Y\in\Gamma_{\mathcal{S}\text{-}\mathrm{pr}}(\nu(\mathcal{S}))\mid L_{Y}\omega\text{ is }d_{\mathcal{S}}\text{-exact}\right\}.
\end{equation}
This formula involves the Lie derivative $L_{Y}\omega$ of the leafwise symplectic form along a projectable vector field on $(M,\mathcal{S})$ which is a well-defined $d_{\mathcal{S}}$-closed, foliated 2-form, in general. The tangential component in \eqref{F1} is independent of $\omega$ and just coincides with the first foliated de Rham cohomology group of $\mathcal{S}$. Notice that the natural homomorphism $H^1_{\mathrm{dR}}(M) \rightarrow H^1_{\mathrm{dR}}(\mathcal{S})$ induced by the canonical inclusion of the $\mathcal{S}$-leaves is not surjective in general, and hence the image under the homomorphism $H_{\mathrm{dR}}^{1}(M)\rightarrow H_{\Psi}^{1}(M)$ induced by $\Psi^{\sharp}$ is not necessarily isomorphic to the tangential component $H^1_{\mathrm{dR}}(\mathcal{S})$. The transversal component in \eqref{F1} consists of the $\nu(\mathcal{S})$-valued infinitesimal automorphisms $Y$ of the symplectic foliation which correspond to the cohomologically trivial in $H_{\operatorname{dR}}^{2}(\mathcal{S})$ transversal variations $L_{Y}\omega$ of $\omega$. Notice that this component coincides also with the image under the homomorphism $H_{\Psi}^{1}(M)\rightarrow\Gamma(\nu(\mathcal{S}))$ induced by the projection $TM\rightarrow\nu(\mathcal{S})$ along $T\mathcal{S}$ and hence its non-triviality implies the existence of non-vanishing Poisson vector fields of $\Psi$ transversal to the symplectic foliation. Formula \eqref{F1} can be used to compute the first Poisson cohomology in some particular cases \cite{AG,KaMaslov,Va-94,Va-90,VK-88,Xu-92}.

Our purpose is to understand how far the above results can be generalized in the singular case. We show that there is the following analog of \eqref{F1} in the semilocal context, around a possibly singular symplectic leaf.

\begin{claim}\label{Claim:Semilocal}
Let $S\subset M$ be an embedded symplectic leaf of the Poisson manifold $(M,\Psi)$. Then, there exists a tubular neighborhood $N$ of $S$ in $M$ such that
\begin{equation}\label{S1}
\Psi=\Psi_{H}+\Psi_{V}\text{ on }N,
\end{equation}
where $\Psi_{H}$ is a bivector field on $N$ of constant rank, $\operatorname{rank}\Psi_{H}=\dim S$ and $\Psi_{V}$ is a Poisson bivector field on $N$ vanishing at $S$. Moreover, the first cohomology group of the Poisson structure $\Psi$ admits the following splitting:
\begin{equation}\label{S2}
H_{\Psi}^{1}(N)\cong H_{\bar{\partial}}^{1}\oplus\frac{\ker\{\rho:\mathcal{A}\longrightarrow H_{\bar{\partial}}^{2}\}}{\operatorname{Ham}(N,\Psi_{V})}.
\end{equation}
\end{claim}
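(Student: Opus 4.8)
The plan is to reduce everything to the geometry of coupling Poisson structures and then to analyze the Lichnerowicz complex in adapted horizontal/vertical variables. First I would produce the neighborhood $N$ and the splitting \eqref{S1}. Because $S$ is an embedded symplectic leaf, its normal bundle inherits a fiberwise transverse Poisson structure vanishing along the zero section, and the semilocal normal form for Poisson structures near a symplectic leaf supplies a fibration $\pi\colon N\to S$, an Ehresmann connection, a horizontal nondegenerate coupling $2$-form $\sigma$, and a vertical Poisson tensor, out of which $\Psi$ is recovered as a coupling tensor. With respect to this data the horizontal part $\Psi_H$ has constant rank $\dim S$, and the vertical part $\Psi_V$ is the transverse structure, which vanishes at $S$; the identity $[\Psi,\Psi]=0$ is then equivalent to the structure equations relating $\sigma$, the connection, and $\Psi_V$.

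Next I would recast the defining equation $\delta_\Psi Z=[\Psi,Z]=0$ of a Poisson vector field in these variables. Writing $Z=Z_{\mathrm{hor}}+Z_{\mathrm{ver}}$ relative to the connection and expanding the Schouten bracket into its horizontal, mixed, and vertical components, the single cocycle condition becomes a triangular system. The vertical block characterizes $Z_{\mathrm{ver}}$, modulo vertical Hamiltonians, as an infinitesimal automorphism of $\Psi_V$; the mixed block forces the horizontal component to descend to a $\bar\partial$-closed horizontal $1$-form, where $\bar\partial$ is the covariant foliated differential associated with the coupling connection and plays the role of $d_{\mathcal{S}}$ from \eqref{F1}; and the top block imposes that the induced variation $L_Z\sigma$ of the coupling form be $\bar\partial$-exact.

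The splitting \eqref{S2} then results from passing to cohomology. The class of the horizontal component gives an element of $H^1_{\bar\partial}$, recovering the tangential summand and generalizing $H^1_{\mathrm{dR}}(\mathcal{S})$. The transversal and vertical data assemble into the space $\A$ of admissible normal sections, and $\rho$ sends such a section $Y$ to the class $[L_Y\sigma]\in H^2_{\bar\partial}$ of its coupling-form variation, so that the infinitesimal automorphism condition is precisely $\rho(Y)=0$. The residual freedom consists of the vertical Hamiltonian vector fields $\Ham(N,\Psi_V)$, which are Poisson-cohomologically trivial, and quotienting by them yields the second summand; this quotient, absent from the regular formula \eqref{F1}, is exactly the new feature produced by a nontrivial transverse structure. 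Finally one checks that $\rho$ is well defined, independent of the connection and of representatives, and that the sum is direct.

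The principal obstacle I anticipate is the degeneration of $\Psi_V$ along $S$. Unlike the regular case, the vertical structure vanishes on the leaf and may be nonlinear, so both the operator $\bar\partial$ and the vertical differential $\delta_{\Psi_V}$ must be organized on a complex that remains well behaved across the singularity, and one must verify that the induced filtration of the Lichnerowicz complex by horizontal degree collapses to a clean splitting in degree one rather than leaving higher spectral-sequence terms. Controlling the interaction of $\bar\partial$ with $\delta_{\Psi_V}$, and confirming that $\Ham(N,\Psi_V)$ accounts for exactly the vertical gauge, is where the technical core of the argument lies.
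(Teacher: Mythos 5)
Your overall route is the same as the paper's: invoke the coupling-neighborhood theorem to obtain $\Psi=\Psi_H+\Psi_V$ with geometric data $(\gamma,\sigma,P)$, transport the Lichnerowicz complex to the bigraded complex of that data, read the cocycle condition $[\Psi,Z]=0$ as a triangular system in bidegrees $(0,2)$, $(1,1)$, $(2,0)$, and then organize the solutions through the operator $\bar{\partial}$, the algebra $\mathcal{A}$, the map $\rho$, and the quotient by $\operatorname{Ham}(N,\Psi_V)$; even your closing remark about the filtration by horizontal degree matches the paper's alternative spectral-sequence derivation.

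There is, however, one genuine gap, and it sits exactly where the singular case departs from the regular one: your definition $\rho(Y)=[L_Y\sigma]$ is not well defined when $\Psi_V\neq 0$. For $Y\in\mathcal{A}$ the $2$-form $L_Y\sigma$ is in general neither Casimir-valued nor $\bar{\partial}$-closed, so it has no class in $H^2_{\bar{\partial}}$: since $Y$ is a Poisson vector field of $P$, the curvature identity $\operatorname{Curv}^\gamma(u,v)=-P^\sharp d\sigma(u,v)$ gives $P^\sharp d\bigl(L_Y\sigma(u,v)\bigr)=[Y,P^\sharp d\sigma(u,v)]=-[Y,\operatorname{Curv}^\gamma(u,v)]$, which need not vanish. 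The correct cocycle is the corrected form $\tau_Y=\partial^\gamma_{1,0}\beta_Y+L_Y\sigma$, where $\beta_Y$ is a $1$-form satisfying $[\operatorname{hor}^\gamma(u),Y]=-P^\sharp d\beta_Y(u)$ --- precisely the datum produced by your ``mixed block'' --- and only this sum is Casimir-valued and $\bar{\partial}$-closed; one must also check that $[\tau_Y]$ does not depend on the choice of $\beta_Y$ (two choices differ by an element of $\mathcal{C}^1$, which changes $\tau_Y$ by a $\bar{\partial}$-exact term). Relatedly, your reading of the mixed block is off: it does not force the horizontal component of a general Poisson vector field to be a $\bar{\partial}$-closed Casimir-valued form; it couples that component to $Y$ via the equation above, and the clean splitting $\operatorname{Poiss}(N,\Psi)\cong Z^1_{\bar{\partial}}\oplus\ker\rho$ emerges only after subtracting from each Poisson vector field a preferred extension $X_Y=-\sharp_H(\beta_Y-c_Y)+Y$ of its vertical part, where $\bar{\partial}c_Y=\tau_Y$. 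In the regular case one may take $\beta_Y=0$, which is why $\rho(Y)=[L_Y\omega]$ is correct there; carrying that formula over verbatim is exactly the step that fails here.
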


The first part of Claim \ref{Claim:Semilocal} is just the known fact \cite{Vo-01} on the existence of a coupling neighborhood of a closed symplectic leaf. One can think of the bivector fields $\Psi_{H}$ and $\Psi_{V}$ in decomposition \eqref{S1} as the ``regular" and ``singular" parts of $\Psi$, respectively. The horizontal subbundle $\mathbb{H}:=\Psi_{H}^{\sharp }(T^{\ast}N)\subset TN$ is a regular (possibly nonintegrable) distribution on $N$ of rank $\mathbb{H}=\dim S$ which belongs to $T\mathcal{S}$. The Poisson bivector field $\Psi_{V}$ (of varying rank, in general) is said to be a \emph{transverse Poisson structure} around the leaf $S$ and characterized by the property: the characteristic distribution of $\Psi_{V}$ belongs to the vertical subbundle of the tubular neighborhood. The restriction of $\Psi_{V}$ to each fiber is just the local transverse Poisson structure whose existence and uniqueness are provided by Weinstein's splitting theorem \cite{We-83}.

The first ``tangential" term $H_{\bar{\partial}}^{\ast}$ in the splitting \eqref{S2} is the cohomology of the cochain complex  $(\mathcal{C}^{\ast}=\oplus_{p}\mathcal{C}^{p},\bar{\partial})$, where $\mathcal{C}^{p}:=\Omega^{p}(S)\otimes_{C^{\infty}(S)}\operatorname{Casim}(N,\Psi_{V})$ is the $C^{\infty}(S)$-module of $p$-forms on $S$ with values in Casimir functions of $\Psi_{V}$. The coboundary operator $\bar{\partial}$ is defined in terms of the covariant exterior derivative associated with $\mathbb{H}$. The second, ``transversal" term in \eqref{S2} involves the kernel of an intrinsic morphism\ $\rho$ from a Lie subalgebra $\mathcal{A}$ of $\operatorname{Poiss}(N,\Psi_{V})$ to the second cohomology group $H_{\bar{\partial}}^{2}$ of $\bar{\partial}$. Notice that, in the case when the symplectic leaf $S$ is regular, the tubular neighborhood $N$ can be chosen in such a way that $\Psi_{V}\equiv0$. Then, $\mathbb{H}\equiv TS$ and formula \eqref{S2} coincides with \eqref{F1} under an appropriate choice of $\nu(\mathcal{S})$.

We give a geometric derivation of formula \eqref{S2} which is related to the description of infinitesimal automorphisms of coupling Poisson structures \cite{Vo-01} and based on the Schouten-Ehresmann bigraded calculus on fibered and foliated manifolds \cite{Va-04,Vo-05}. Algebraically, the study of the Poisson cohomology around $S$ deals with a class of bigraded cochain complexes appearing in various contexts, for example, in \cite{Bra-10,CrFe-10,CrMa-13,ItskovYuraKarasev,Marcut-13,Va-94}. Using some natural filtrations of these complexes, we show that formula \eqref{S2} can be also derived in the framework of the theory of spectral sequences. As well as in the regular case \cite{Va-90}, the main motivation of this algebraic approach is related to further computations of the Poisson cohomology in higher degrees.

We apply formula \eqref{S2} to two particular cases which are related with singular symplectic foliations. Firstly, by using Conn's result \cite{Conn-85}, we formulate some sufficient criteria for the triviality of $H_{\Psi}^{1}(N)$, in the case when the isotropy algebra of the symplectic leaf $S$ is a semisimple Lie algebra of compact type. The main point here is to regard the cochain complex $(\mathcal{C}^{*},\bar{\partial})$ as a subcomplex of a foliated de Rham complex. This condition can be realized when the symplectic leaf $S$ is ``flat" in the following sense: there exists a regular foliation $\mathcal{F}$ on $N$ for which $S$ is a leaf and such that the projectable sections of $T\mathcal{F}$ are infinitesimal automorphisms of $\Psi_{V}$. Then, $(\mathcal{C}^{*},\bar{\partial})$ can be viewed as a subcomplex of $(\Gamma(\wedge^{\ast}T^{*}\mathcal{F}),d_{\mathcal{F}})$ and as a consequence we have the natural homomorphism $H_{\bar{\partial}}^{1}\rightarrow H_{\operatorname{dR}}^{1}(\mathcal{F})$. If this homomorphism is injective, then the triviality of the first foliated de Rham cohomology implies the triviality of $H_{\bar{\partial}}^{1}$, which leads to vanishing the first cohomology group $H_{\Psi}^{1}(N)$. We formulate the injectivity condition in some geometric terms. Here is one of the results in the above context.

\begin{claim}\label{Claim:LieBundle}
Let $S\subset M$ be an embedded symplectic leaf of the Poisson manifold $(M,\Psi)$ such that the normal bundle of $S$ (viewing as a Lie-Poisson bundle) is trivial. Assume that the isotropy algebra of the symplectic leaf $S$ is a semisimple Lie algebra of compact type. If $S$ is compact and simply connected, then there exists a tubular neighborhood $N$ of $S$ in $M$ such that every Poisson vector field of $\Psi$ is Hamiltonian on $N$.
\end{claim}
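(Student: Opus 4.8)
The plan is to read off the vanishing of $H_\Psi^1(N)$ directly from the splitting \eqref{S2} provided by Claim~\ref{Claim:Semilocal}: it suffices to show that, under the present hypotheses, both direct summands on its right-hand side are trivial. This decouples the problem into a \emph{tangential} task, the vanishing of $H_{\bar\partial}^1$, and a \emph{transversal} task, the vanishing of the quotient $\ker\{\rho:\A\to H_{\bar\partial}^2\}/\Ham(N,\Psi_V)$. The first is governed by the topology of $S$ together with the geometry of the leaf, and the second by the algebraic rigidity coming from semisimplicity.

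For the tangential summand I would first exploit the triviality of the normal bundle of $S$ as a Lie--Poisson bundle. This furnishes, after shrinking the coupling neighborhood of \cite{Vo-01}, a trivialization $N\cong S\times W$ over a neighborhood $W$ of the origin in the dual $\g^\ast$ of the isotropy algebra, in which $\Psi_V$ is the pullback of a single transverse Poisson structure on $W$ (linear, by Weinstein \cite{We-83}) that is independent of the point of $S$. Taking $\F$ to be the foliation by the leaves $S\times\{w\}$ then realizes $S$ as a flat leaf: $\F$ is regular, has $S$ as a leaf, its tangent sections preserve $\Psi_V$, and the horizontal subbundle $\H$ is identified with $T\F$. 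Consequently $(\C^\ast,\bar\partial)$ embeds as a subcomplex of the foliated de Rham complex $(\Gamma(\wedge^\ast T^\ast\F),d_\F)$, yielding the comparison homomorphism $H_{\bar\partial}^1\to H_{\mathrm{dR}}^1(\F)$. Since every leaf of $\F$ is diffeomorphic to the compact simply connected manifold $S$, the smooth-family version of the leafwise Poincar\'e lemma over $W$ gives $H_{\mathrm{dR}}^1(\F)=0$. It remains to check injectivity: if a $\bar\partial$-closed Casimir-valued $1$-form $\alpha$ equals $d_\F f$ for some $f\in C^\infty(N)$, then integration along the leaves shows that $f$ agrees, up to a function constant on the leaves, with a genuine Casimir of $\Psi_V$, so that $\alpha=\bar\partial g$ with $g$ a Casimir; here the compact type of $\g$ enters through averaging over the isotropy group to project onto the invariants. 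This gives $H_{\bar\partial}^1=0$.

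The transversal summand I would handle through Conn's linearization theorem \cite{Conn-85}, which applies precisely because $\g$ is semisimple of compact type and identifies the fiber Poisson structures of $\Psi_V$ with the linear Lie--Poisson structure of $\g^\ast$. Whitehead's lemmas then force the vanishing of the relevant vertical Poisson cohomology, so that the obstruction morphism $\rho$ has kernel exactly $\Ham(N,\Psi_V)$: every $\Psi_V$-Poisson field in $\A$ whose associated transversal variation is $\bar\partial$-trivial is already $\Psi_V$-Hamiltonian. Hence the transversal quotient in \eqref{S2} is trivial, and together with $H_{\bar\partial}^1=0$ this yields $H_\Psi^1(N)=0$; that is, every Poisson vector field of $\Psi$ is Hamiltonian on $N$.

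I expect the technical heart, and the main obstacle, to be the geometric realization of the flatness of $S$: the passage from the algebraic triviality of the Lie--Poisson normal bundle to an honest flat foliation $\F$ with integrable horizontal distribution $\H=T\F$ and an $S$-independent transverse structure, together with the identification of $\bar\partial$ as the restriction of $d_\F$. Once this normal form is in place, both $H_{\mathrm{dR}}^1(\F)=0$ and the injectivity of the comparison map follow from the simple connectedness of $S$ and the compact-type averaging, while the transversal vanishing follows from the semisimple rigidity; but it is the compatibility of the coupling data (connection, curvature and transverse structure) underlying this flat normal form that requires the most care.
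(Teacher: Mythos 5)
Your overall architecture coincides with the paper's: reduce to the splitting \eqref{S2}, kill the transversal summand by Conn-type rigidity, and kill the tangential summand by comparing $(\C^{*},\bar\partial)$ with the foliated de Rham complex of the horizontal foliation of a linearized product model (this is the chain Theorem \ref{teo:Vanish1} $\to$ Theorem \ref{Teo:BiFib} $\to$ Proposition \ref{prop:SympLeaf} in the paper). However, two of your steps, as stated, fail. First, the identification ``$\H = T\F$'' is false: $\H=\Pi^{\sharp}(\V^{0})$ is intrinsically attached to the coupling structure, and by the structure equation \eqref{CC3} its curvature equals $-P^{\sharp}d\sigma$, which vanishes \emph{on} $S$ but not on a neighborhood; no choice of tubular neighborhood makes $\H$ integrable in general. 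Since the operator $\bar\partial$ entering \eqref{S2} is built from the coupling connection (see \eqref{DEF}), your comparison map $H^{1}_{\bar\partial}\to H^{1}_{\operatorname{dR}}(\F)$ is not even defined until one proves that $\bar\partial$ computed with the coupling connection agrees, on Casimir-valued forms, with the operator of the flat connection $\gamma^{0}$ tangent to $\F$. The paper obtains this from condition \eqref{PC1}: when $\Poiss_{V}(E,P)=\Ham(E,P)$, the horizontal lifts of $u\in\X(B)$ with respect to any two Poisson connections differ by Hamiltonian vector fields, and Hamiltonian vector fields annihilate Casimir functions, whence $\bar\partial^{\gamma}=\bar\partial^{\gamma^{0}}$ on $\C^{*}$. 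In particular, your claim that the problem ``decouples'' into independent tangential and transversal tasks is wrong: the tangential half of your argument already requires the transversal (Conn) input.

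Second, ``Whitehead's lemmas then force the vanishing of the relevant vertical Poisson cohomology'' is not a proof. Whitehead's lemmas concern Chevalley--Eilenberg cohomology with finite-dimensional coefficients; what is needed is Conn's theorem that the \emph{smooth} first Poisson cohomology of the Lie--Poisson structure vanishes on an invariant ball, together with the existence of homotopy operators, and---crucially for the bundle-level statement $\Poiss_{V}(E,P)=\Ham(E,P)$---the parametrized version of these operators glued by a partition of unity over $S$; this is exactly how Section \ref{Sec:Isotropy} of the paper argues. Relatedly, the transverse structure is linear by Conn, not by Weinstein (Weinstein's splitting gives existence and uniqueness of the transverse structure, not its linearity), and passing from fiberwise linearization to a fiber-preserving diffeomorphism $g$ with $g_{*}P=P^{(1)}$ over all of $S$, smooth in the base point and with image of the form $S\times K$ for a ball $K$, is a result the paper imports from \cite{Vo-05}; you correctly flag this as the technical heart but leave it unresolved. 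On the positive side, your averaging argument over the compact isotropy group for the injectivity of $H^{1}_{\bar\partial}\to H^{1}_{\operatorname{dR}}(\F)$ is sound once the linearized product model and the connection-independence above are in place (averaging a function whose $\F$-leafwise differential is Casimir-valued produces a Casimir with the same leafwise differential), and it is a more elementary route to hypothesis (ii) of Theorem \ref{teo:Vanish1} than the paper's own derivation via Lemma \ref{lemma:Hups0} and $H^{1}_{\Lambda}(K)=\{0\}$.
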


Secondly, we apply formula \eqref{S2} to another particular case, when the Casimir functions of the transversal Poisson structure $\Psi_{V}$ are projectable (foliated) with respect to the vertical foliation of the tubular neighborhood. In this case, the cochain complex $(\mathcal{C}^{*},\bar{\partial})$ is isomorphic to the de Rham complex of the symplectic leaf $S$ and hence $H_{\bar{\partial}}^{\ast}\cong H_{\operatorname{dR}}^{\ast}(S)$. Therefore, under assumption that $H_{\operatorname{dR}}^{2}(S)=0$, the first cohomology $H_{\Psi}^{1}(N)$ is directly expressed in terms of the Lie algebra $\mathcal{A}$. Using this argument, we illustrate the computation of $H_{\Psi}^{1}(N)$ by some examples.

We remark that most of the results of this article (for example, Claim \ref{Claim:Semilocal}) can be generalized to the Dirac case in order to describe the first Lie algebroid cohomology of Dirac structures around presymplectic leaves \cite{Marcut-13,Va2-04}.

The paper is organized as follows. In Section \ref{Sec:Cov}, we briefly recall some notions and facts about Ehresmann connections.
In Section \ref{Sec:coupling}, we review some properties of coupling Poisson structures and formulate the result on the
Lichnerowicz-Poisson complex in coupling neighborhoods of symplectic leaves. In Section \ref{Sec:InfPoiss}, we present our
main results on the infinitesimal automorphisms of coupling Poisson structures and give a proof of Claim \ref{Claim:Semilocal}.
In the last three sections, \ref{Sec:Vanish}-\ref{Sec:ProyCasim}, the general results are applied to some particular cases.

\section{Covariant Exterior Derivatives}\label{Sec:Cov}

In this section, we recall some notions and facts in the theory of Ehresmann connections on fiber bundles which will be used throughout the text (for more details, see \cite{KMS-93,Va-04,Vo-05}).

Let $E\overset{\pi}{\rightarrow} B$ be a fiber bundle (a surjective submersion) over a manifold $B$. Denote by $\mathbb{V}:=\ker d\pi\subset TE$ the vertical subbundle and by $\mathbb{V}^{0}$ $\subset T^{\ast}E$ its annihilator. The sections of the vector bundles $\wedge^{q}\mathbb{V}$ and $\wedge^{p}\mathbb{V}^{0}$ are called the vertical $q$-vector fields and horizontal $p$-forms on $E$, respectively. In particular,
$\Gamma(\wedge^{0}\mathbb{V}^{0})=C^{\infty}(E)$.

Recall that a vector field $X\in\mathfrak{X}(E)$ is said to be $\pi$-related to $u\in\mathfrak{X}(B)$, if $d\pi\circ X = u\circ \pi$. In this case, we say that $X$ is $\pi$-\emph{projectable}. It is clear that $\pi$-projectable vector fields form an $\mathbb{R}$-Lie subalgebra of $\mathfrak{X}(E)$, which will be denoted by $\Gamma_{\pi\text{-}\mathrm{pr}}(TE)$. Given any vector subbundle $F\subset TE$, we will also use the notation $\Gamma_{\pi\text{-}\mathrm{pr}}(F) := \Gamma_{\pi\text{-}\mathrm{pr}}(TE)\cap\Gamma(F)$ throughout this text. Note that every $\pi$-projectable vector field $X\in\Gamma_{\pi\text{-}\mathrm{pr}}(TE)$ has the property $[X,\Gamma(\mathbb{V})]\subseteq\Gamma(\mathbb{V})$. Conversely, in the case when the fibers of $E\overset{\pi}{\rightarrow} B$ are connected, this property characterizes the $\pi$-projectability of vector fields.

On the other hand, recall that an \emph{Ehresmann connection} on $E$ is a vector bundle morphism $\gamma:TE\rightarrow TE$ such that $\gamma^{2}=\gamma$ and is identical on the vertical subbundle, $\gamma(Y)=Y$ \ $\forall~Y\in\Gamma(\mathbb{V})$. Therefore, $\gamma$ induces a splitting
\begin{equation}\label{Hor1}
TE=\mathbb{H}\oplus\mathbb{V},
\end{equation}
where $\mathbb{H}=\mathbb{H}^{\gamma}:=\ker\gamma$ is the horizontal subbundle associated with $\gamma$. Conversely, given a subbundle $\mathbb{H}$ complementary to $\mathbb{V}$, one can recover the Ehresmann connection from $\mathbb{H}$ by setting $\gamma=\operatorname{pr}_{V}:TE\rightarrow \mathbb{V}$ (the projection along $\mathbb{H}$).

Suppose we are given an Ehresmann connection $\gamma$. The smooth sections of the vector bundles $\wedge^{p}\mathbb{H}$ and $\wedge^{q}\mathbb{H}^{0}$ are said to be horizontal $p$-vector fields and the vertical $q$-forms on $E$, respectively. The \emph{horizontal lift} of $u\in\mathfrak{X}(B)$ with respect to $\gamma$ is the unique horizontal vector field $\operatorname{hor}^{\gamma}(u)\in\Gamma(\mathbb{H})$ which is $\pi$-related with $u$. Therefore, the horizontal lifts are $\pi$-projectable and hence, satisfy the following condition:
\begin{equation}\label{Pr}
[\operatorname{hor}^{\gamma}(u),\Gamma(\mathbb{V})]\subset\Gamma(\mathbb{V}).
\end{equation}

Consider the $C^{\infty}(B)$-module $\Omega_{B}^{p,q}(E):= \Omega^{p}(B)\otimes_{C^{\infty}(B)}\Gamma(\wedge^{q}\mathbb{V})$ of all $p$-forms on $B$ with values in vertical $q$-vector fields on $E$. In particular, by property \eqref{Pr}, the Lie derivative $L_{\operatorname{hor}^{\gamma}(u)}$ leaves invariant the subspaces $\Gamma(\wedge^{q}\mathbb{V})$ of vertical tensor fields. Hence, the $\gamma$-\emph{covariant exterior derivative} $\partial_{1,0}^{\gamma}:\Omega_{B}^{p,q}(E)\rightarrow\Omega_{B}^{p+1,q}(E)$ is defined by the standard formula
\begin{align}
& (\partial_{1,0}^{\gamma}\eta)(u_{0},u_{1},\ldots,u_{p}):=
\sum_{i=0}^{p}(-1)^{i}L_{\operatorname{hor}^{\gamma}(u_{i})}\eta(u_{0},u_{1},\ldots,\hat{u}_{i},\dots,u_{p})\label{Def}\\
&  +\sum_{0\leq i<j\leq p} (-1)^{i+j}
\eta([u_{i},u_{j}],u_{0},\ldots,\hat{u}_{i},\ldots,\hat{u}_{j},\ldots,u_{p}).\nonumber
\end{align}
The curvature form $\operatorname{Curv}^{\gamma}\in\Omega_{B}^{2,1}(E)$ of the connection $\gamma$ is given by
\[
\operatorname{Curv}^{\gamma}(u_{1},u_{2}) :=
[\operatorname{hor}^{\gamma}(u_{1}),\operatorname{hor}^{\gamma}(u_{2})]
- \operatorname{hor}^{\gamma}([u_{1},u_{2}]).
\]
The Bianchi identity reads $\partial_{1,0}^{\gamma}\operatorname{Curv}^{\gamma}=0.$ Moreover, we have the identity
\begin{align}
&((\partial_{1,0}^{\gamma})^{2}\eta)(u_{0},\ldots,u_{p+1})\label{Cur2}\\
&  =-\sum_{0\leq i<j\leq p+1}(-1)^{i+j}L_{\operatorname{Curv}^{\gamma}(u_{i},u_{j})} \eta(u_{0},u_{1},\ldots,\hat{u}_{i},\ldots,\hat{u}_{j},\ldots,u_{p+1})\nonumber
\end{align}
which says that $\partial_{1,0}^{\gamma}$ is a coboundary operator if and only if the connection $\gamma$ is \emph{flat}, i.e.,
$\operatorname{Curv}^{\gamma}=0$. Geometrically, the zero curvature condition is equivalent to the integrability of the horizontal subbundle $\mathbb{H}$.

The splitting \eqref{Hor1} induces the following $\mathbb{H}$-dependent bigrading of multivector fields on $E$:
\begin{equation}\label{Dec}
\Gamma(\wedge^{k}TE)=\bigoplus_{p+q=k}\Gamma(\wedge^{p,q}TE),
\end{equation}
where $\wedge^{p,q}TE:=\wedge^{p}\mathbb{H}\otimes\wedge^{q}\mathbb{V}$. For any $k$-vector field $A$ on $E$, the term of bidegree $(p,q)$ in decomposition \eqref{Dec} is denoted by $A_{p,q}$. Moreover, the dual splitting $T^{\ast}E=\mathbb{V}^{0}\oplus\mathbb{H}^{0}$ induces a bigrading of differential forms on $E$, as follows:
\[
\Gamma(\wedge^{k}T^{\ast}E)=\bigoplus_{p+q=k}\Gamma(\wedge^{p}\mathbb{V}^{0}\otimes\wedge^{q}\mathbb{H}^{0}).
\]

We observe that there exists a natural identification
\[
\Omega_{B}^{p,0}(E)=\Omega^{p}(B)\otimes_{C^{\infty}(B)}C^{\infty}(E)\cong\Gamma(\wedge^{p}\mathbb{V}^{0}).
\]
Indeed, one can associate to every $\eta\in\Omega_{B}^{p,0}(E)$ a horizontal $p$-form
$\pi^{\ast}\eta\in\Gamma(\wedge^{p}\mathbb{V}^{0})$, given for $X_{1},\ldots,X_{p}\in\mathfrak{X}(E)$ and $e\in E$ by
\[
(\pi^{\ast}\eta)(X_{1},\ldots,X_{p})(e) := \eta(d_{e}\pi(X_{1}),\ldots,d_{e}\pi(X_{p})).
\]
Since $\mathbb{V}=\ker d\pi$, it is clear that $\pi^{\ast}\eta\in\Gamma(\wedge^{p}\mathbb{V}^{0})$. Therefore, if we fix an Ehresmann connection $\gamma$, then $\pi^{\ast}\eta$ is uniquely determined by its values on horizontal lifts, namely,
\[
(\pi^{\ast}\eta)(\operatorname{hor}^{\gamma}(u_{1}),\ldots,\operatorname{hor}^{\gamma}(u_{p}))=\eta(u_{1},\ldots,u_{p}).
\]
Moreover, from \eqref{Def}, we get the relation
\begin{equation}\label{PROP}
\pi^{\ast}(\partial_{1,0}^{\gamma}\eta)=(d(\pi^{\ast}\eta))_{p+1,0},
\end{equation}
where $d$ is the exterior differential for forms on $E$.

\section{Coupling Neighborhoods}\label{Sec:coupling}

In this section, we recall some properties of coupling Poisson structures on fiber bundles and their applications to describe the geometry of Poisson manifolds around its symplectic leaves. For more details, see \cite{Marcut-13,Va-04,Vo-01,Vo-05}.

\paragraph{Coupling Poisson Structures} Let $E\overset{\pi}{\rightarrow} B$ be a fiber bundle and $\mathbb{V}^{0}\subset T^{*}E$ the annihilator of the vertical subbundle $\mathbb{V}$.

\begin{definition}
The Poisson structure defined by a bivector field $\Pi\in\mathfrak{X}^{2}(E)$ is said to be a \emph{coupling Poisson structure} on the fiber bundle if
\begin{equation}\label{SPL}
TE=\mathbb{H}\oplus\mathbb{V},\text{~where~}\mathbb{H}:=\Pi^{\sharp}(\mathbb{V}^{0}).
\end{equation}
\end{definition}

Note that every coupling Poisson structure $\Pi$ has the bigraded decomposition of the form $\Pi=\Pi_{2,0}+\Pi_{0,2}$, \ where $\Pi_{2,0}\in\Gamma(\wedge^{2}\mathbb{H})$ is a horizontal bivector field of constant rank, $\operatorname{rank}\Pi_{2,0}=\operatorname{rank}\mathbb{H}$, and $\Pi_{0,2} \in\Gamma(\wedge^{2}\mathbb{V})$ is a \emph{vertical Poisson tensor}. The characteristic distribution of $\Pi$ is the direct sum of the horizontal bundle $\mathbb{H}$ and the
characteristic distribution of $\Pi_{0,2}$,
\[
\Pi^{\sharp}(T^{\ast}E)=\mathbb{H}\oplus\Pi_{0,2}^{\sharp}(\mathbb{H}^{0}).
\]
It follows that the fibers of the projection $\pi$ intersect the symplectic leaves of $\Pi$ transversally and symplectically. Moreover, the restriction of $\Pi_{2,0}^{\sharp}:T^{\ast}E\rightarrow TE$ to $\mathbb{V}^{0}$ is a vector bundle isomorphism onto $\mathbb{H}$.

One can associate to a given coupling Poisson tensor $\Pi$ the geometric data $(\gamma,\sigma,P)$ consisting of the Ehresmann connection $\gamma\in\Omega^{1}(E;\mathbb{V})$ associated with the horizontal subbundle $\mathbb{H} =\Pi^{\sharp}(\mathbb{V}^{0})$, a nondegenerated 2-form $\sigma\in\Omega ^{2}(B)\otimes_{C^{\infty}(B)} C^{\infty}(E)$, called the \emph{coupling form}, and the vertical Poisson bivector field  $P:=\Pi_{0,2}\in\Omega_{B}^{0,2}(E)$. The nondegeneracy of the 2-form $\sigma$ means that the vector bundle morphism  ($\pi^{\ast}\sigma)^{\flat}:\mathbb{H}\rightarrow\mathbb{V}^{0}$ \ is an isomorphism. In terms of the horizontal part of $\Pi$, the coupling form is given by $(\pi^{\ast}\sigma)^{\flat}=-\left(\Pi_{2,0}^{\sharp}\mid_{\mathbb{V}^{0}}\right)^{-1}$. One can show that the geometric data satisfy the \emph{structure equations}
\begin{align}
[P,P]&=0, \label{CC1}\\
L_{\operatorname{hor}^{\gamma}(u)}P&=0, \label{CC2}\\
\operatorname{Curv}^{\gamma}(u,v)&=-P^{\sharp}d\sigma(u,v), \label{CC3}\\
\partial_{1,0}^{\gamma}\sigma&=0, \label{CC4}
\end{align}
for any $u,v\in\mathfrak{X}(B)$, which give a factorization of the Jacobi identity for $\Pi$. Condition \eqref{CC2} means that the connection $\gamma$ on the Poisson fiber bundle $(E\xrightarrow{\pi}B,P)$ is Poisson. In general, the curvature $\operatorname{Curv}^{\gamma}\in\Omega_{B}^{2,1}(E)$ of a Poisson connection takes values in the space of vertical Poisson vector fields of $P$. The curvature identity \eqref{CC3} says that $\operatorname{Curv}^{\gamma}(u,v)$ is a Hamiltonian vector field for any $u,v\in\mathfrak{X}(B)$. Moreover, the coupling 2-form $\sigma$ must be $\gamma$-covariantly constant (condition \eqref{CC4}). We say that some geometric data are \emph{integrable} if they satisfy \eqref{CC1}-\eqref{CC4}.

Conversely, every integrable geometric data $(\gamma,\sigma,P)$ defines a coupling Poisson tensor $\Pi$ on $E$ under the nondegeneracy condition for $\sigma$.

\paragraph{Bigrading of the Lichnerowicz-Poisson Complex} Following \cite{CrFe-10}, let us associate to the geometric data $(\gamma,\sigma,P)$ of a coupling Poison tensor $\Pi\in\mathfrak{X}^{2}(E)$ the following cochain complex. Consider the Schouten-Nijenhuis bracket $[~,~]:\Gamma(\wedge^{k_{1}}TE)\times\Gamma(\wedge^{k_{2}}TE)\rightarrow\Gamma(\wedge^{k_{1}+k_{2}-1}TE)$ for multivector fields on the total space $E$ defined in such a way that the triple $(\Gamma(\wedge TE),\wedge,[~,~])$ is a graded Poisson algebra of degree $-1$ (see \cite{DZ}). It is clear that the Schouten-Nijenhuis bracket of two vertical multivector fields on $E$ is again vertical. As a consequence, we can endow the bigraded $C^{\infty}(B)$-module
\[
\mathfrak{M}^{\ast\ast}=\bigoplus_{k=0}^{\infty}\mathfrak{M}^{k},\qquad \mathfrak{M}^{k}:=\bigoplus_{p+q=k}\Omega_{B}^{p,q}(E).
\]
with a structure of graded Poisson algebra of degree $-1$, $(\mathfrak{M}^{\ast\ast},\wedge,[~,~])$. Explicitly, for $\eta\in\Omega_{B}^{p,q}(E)$ and $\theta\in\Omega_{B}^{p',q'}(E)$, we have \cite{CrFe-10,Marcut-13}
\begin{align*}
  (\eta\wedge\theta)(u_{1},\ldots,u_{p+p'})&:=(-1)^{p'q}\sum_{\tau}\operatorname{sgn}(\tau)\eta(u_{\tau(1)},\ldots,u_{\tau(p)})\wedge\theta(u_{\tau(p+1)},\ldots,u_{\tau(p+p')}),\\
  [\eta,\theta](u_{1},\ldots,u_{p+p'})&:=(-1)^{p'(q-1)}\sum_{\tau}\operatorname{sgn}(\tau)[\eta(u_{\tau(1)},\ldots,u_{\tau(p)}),\theta(u_{\tau(p+1)},\ldots,u_{\tau(p+p')})],
\end{align*}
where $u_{i}\in\mathfrak{X}(B)$. Here, in the right-hand sides of these equalities, the symbols $\wedge$ and $[~,~]$ denote the exterior product and the Schouten-Nijenhuis bracket on $\Gamma(\wedge^{*}\mathbb{V})$, respectively. Thus, every element $\theta\in\Omega_{B}^{p,q}(E)$ induces a graded derivation $\operatorname{ad}_{\theta}$ of bidegree $(p,q-1)$, defined by the adjoint operator
$\operatorname{ad}_{\theta}(\cdot)=[\theta,\cdot]$. In particular, the vertical Poisson bivector field $P\in\Gamma(\wedge^{2}\mathbb{V})$ induces the derivation $\delta_{P}:=\operatorname{ad}_{P}:\Omega_{B}^{p,q}(E)\rightarrow\Omega_{B}^{p,q+1}(E)$ of bidegree $(0,1)$ given by
\[
(\operatorname{ad}_{P}\eta)(u_{1},\dots,u_{p}):=(-1)^{p}[P,\eta(u_{1},\dots,u_{p})].
\]
This is a coboundary operator which gives rise to the vertical Poisson complex $(\oplus_{q=0}^{\infty}\Omega_{B}^{0,q}(E),\delta_{P})$.

Now, using the geometric data $(\gamma,\sigma,P)$, we can define an operator $\partial:\mathfrak{M}^{\ast\ast}\rightarrow\mathfrak{M}^{\ast\ast}$ as the sum of bigraded operators
\begin{equation}\label{Co}
\partial := \partial_{2,-1}^{\sigma} + \partial_{1,0}^{\gamma} + \partial_{0,1}^{P},
\end{equation}
where $\partial_{2,-1}^{\sigma}:=-\operatorname{ad}_{\sigma}$, \ $\partial_{1,0}^{\gamma}$ is the covariant exterior derivative (see Section \ref{Sec:Cov}), and $\partial_{0,1}^{P}:=\delta_{P}$. Observe that the integrability conditions for the geometric data $(\gamma,\sigma,P)$ mean that $\partial$ is a coboundary operator, $\partial^{2}=0$. Indeed, computing the bigraded components of $\partial^{2}$, we get that equations \eqref{CC1}-\eqref{CC4} are equivalent to the following relations:
\begin{align}
  (\partial_{0,1}^{P})^{2} &=0,\label{eq:Cob1}\\
  \partial_{1,0}^{\gamma}\partial_{0,1}^{P} + \partial_{0,1}^{P}\partial_{1,0}^{\gamma} &=0,\label{eq:Cob2}\\
  \partial_{2,-1}^{\sigma}\partial_{0,1}^{P} + \partial_{0,1}^{P}\partial_{2,-1}^{\sigma} + (\partial_{1,0}^{\gamma})^{2} &=0,\label{eq:Cob3}\\
  \partial_{2,-1}^{\sigma}\partial_{1,0}^{\gamma} + \partial_{1,0}^{\gamma}\partial_{2,-1}^{\sigma} &= 0.\label{eq:Cob4}
\end{align}
Moreover, by the Jacobi identity for the bracket on $\mathfrak{M}$, one can show that $(\partial_{2,-1}^{\sigma})^{2} = 0$.

Various versions of the following fact can be found in \cite{CrFe-10,Marcut-13}.

\begin{proposition}\label{prop:IsoCompx}
Let $\Pi\in\mathfrak{X}^{2}(E)$ be a coupling Poisson tensor on $E\overset{\pi}{\rightarrow}B$ and let $(\gamma,\sigma,P)$ be the geometric data associated with $\Pi$. Then the Lichnerowicz-Poisson complex $(\mathfrak{X}^{\ast}(E),\delta_{\Pi})$ is isomorphic to the cochain complex $(\mathfrak{M}^{\ast\ast},\partial)$.
\end{proposition}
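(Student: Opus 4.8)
The plan is to exhibit an explicit isomorphism of cochain complexes $\Phi\colon(\mathfrak{M}^{\ast\ast},\partial)\to(\mathfrak{X}^{\ast}(E),\delta_{\Pi})$. I would first construct $\Phi$ as a bigrading-preserving isomorphism of graded algebras $(\mathfrak{M}^{\ast\ast},\wedge)\to(\Gamma(\wedge TE),\wedge)$, and then verify that it intertwines the coboundary operators, $\Phi\circ\partial=\delta_{\Pi}\circ\Phi$. The decisive simplification is that both $\delta_{\Pi}=[\Pi,\cdot]$ and $\partial$ are graded derivations of degree $+1$ of their respective exterior algebras: $\delta_{\Pi}$ because $[\Pi,\cdot]$ is a derivation of the Schouten algebra, and $\partial$ because each of its components $\partial^{\sigma}_{2,-1}=-\operatorname{ad}_{\sigma}$, $\partial^{\gamma}_{1,0}$ and $\partial^{P}_{0,1}=\operatorname{ad}_{P}$ is a derivation of $(\mathfrak{M}^{\ast\ast},\wedge)$ of total degree $+1$. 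Since $\Gamma(\wedge TE)$ is generated, as an algebra, by $C^{\infty}(E)$ and $\mathfrak{X}(E)$, it then suffices to check the identity $\Phi\circ\partial=\delta_{\Pi}\circ\Phi$ on elements of bidegree $(0,0)$, $(1,0)$ and $(0,1)$.

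To construct $\Phi$, I would start from the identification $\Omega_{B}^{p,0}(E)\cong\Gamma(\wedge^{p}\mathbb{V}^{0})$, $\eta\mapsto\pi^{\ast}\eta$, of $B$-forms with horizontal forms recalled in Section \ref{Sec:Cov}, and compose it with the fiberwise musical isomorphism induced by the coupling form, $\pi^{\ast}\sigma^{\flat}\colon\mathbb{H}\to\mathbb{V}^{0}$ --- equivalently its inverse $\Pi_{2,0}^{\sharp}|_{\mathbb{V}^{0}}\colon\mathbb{V}^{0}\to\mathbb{H}$ --- so as to convert the horizontal form factor $\wedge^{p}\mathbb{V}^{0}$ into the horizontal multivector factor $\wedge^{p}\mathbb{H}$, while acting by the identity on the vertical factor $\wedge^{q}\mathbb{V}$. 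This yields, for each $(p,q)$, a $C^{\infty}(B)$-linear isomorphism $\Omega_{B}^{p,q}(E)=\Omega^{p}(B)\otimes_{C^{\infty}(B)}\Gamma(\wedge^{q}\mathbb{V})\xrightarrow{\ \sim\ }\Gamma(\wedge^{p}\mathbb{H}\otimes\wedge^{q}\mathbb{V})=\Gamma(\wedge^{p,q}TE)$, and, summing over $p+q=k$, a total-degree-preserving isomorphism $\mathfrak{M}^{k}\to\mathfrak{X}^{k}(E)$. That $\Phi$ is multiplicative is a formal check against the sign convention $(-1)^{p'q}$ in the definition of the wedge product on $\mathfrak{M}^{\ast\ast}$, using that both exterior products are computed factorwise on the horizontal and vertical parts.

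For the chain-map identity on generators I would proceed degree by degree. In bidegree $(0,0)$, where $\Phi=\operatorname{id}$ on $C^{\infty}(E)$, the term $\operatorname{ad}_{\sigma}f$ vanishes since $\sigma$ is function-valued, so $\partial f=\partial^{\gamma}_{1,0}f+\partial^{P}_{0,1}f$; under $\Phi$ these two summands are carried to the horizontal and the vertical component of $\delta_{\Pi}f=[\Pi,f]=-\Pi^{\sharp}df$, respectively, whence $\Phi(\partial f)=\delta_{\Pi}f$. In bidegree $(0,1)$, i.e.\ for vertical vector fields, the key input is the Poisson-connection equation \eqref{CC2}: it gives $[\Pi_{0,2},\operatorname{hor}^{\gamma}(u)]=-L_{\operatorname{hor}^{\gamma}(u)}\Pi_{0,2}=0$, which forces the vertical part $[\Pi_{0,2},\cdot]$ of $\delta_{\Pi}$ to be a pure $(0,+1)$ operator matching $\partial^{P}_{0,1}$. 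In bidegree $(1,0)$, i.e.\ for horizontal fields, one decomposes $[\Pi_{2,0},\cdot]$ into its $(+1,0)$ and $(+2,-1)$ components: the covariant-constancy equation \eqref{CC4} for $\sigma$, together with relation \eqref{PROP}, identifies the $(+1,0)$ component with $\partial^{\gamma}_{1,0}$, while the curvature identity \eqref{CC3}, $\operatorname{Curv}^{\gamma}=-P^{\sharp}d\sigma$, identifies the $(+2,-1)$ component with $\partial^{\sigma}_{2,-1}=-\operatorname{ad}_{\sigma}$.

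The main obstacle is the bidegree-$(1,0)$ computation, and two points require care. First, one must show that $[\Pi_{2,0},\cdot]$ produces no spurious $(0,+1)$ (or higher-vertical) component: a priori the Lie derivatives $L_{\operatorname{hor}^{\gamma}(u)}\Pi_{2,0}$ and $L_{W}\Pi_{2,0}$ occurring in the Leibniz expansion can tilt the horizontal bivector into mixed bidegrees, and it is precisely the structure equations \eqref{CC2}--\eqref{CC4} that guarantee these contributions reassemble into the two allowed shifts. Second, because $\Phi$ carries the $\sigma$-twist on the horizontal factor, matching $\partial^{\gamma}_{1,0}$ through $\Phi$ is not automatic: one needs $\partial^{\gamma}_{1,0}$ to commute with the musical isomorphism $\Pi_{2,0}^{\sharp}$, which is exactly the content of \eqref{CC4}, $\partial^{\gamma}_{1,0}\sigma=0$. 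Once these verifications are in place, $\Phi$ is a degree-preserving algebra isomorphism intertwining two derivations that agree on generators, hence an isomorphism of cochain complexes; as a consistency by-product, the equivalence of the structure equations with \eqref{eq:Cob1}--\eqref{eq:Cob4} re-proves $\partial^{2}=0$ in parallel with $\delta_{\Pi}^{2}=0$.
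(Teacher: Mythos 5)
Your proposal is correct and takes essentially the same route as the paper's proof: the paper's map $\flat_{\sigma}$ in \eqref{eq:IsoCompx} is exactly the inverse of your $\Phi$ (built from $\pi^{\ast}$ and the musical isomorphism $(\pi^{\ast}\sigma)^{\flat}=-\left(\Pi_{2,0}^{\sharp}|_{\mathbb{V}^{0}}\right)^{-1}$), and the paper likewise uses the fact that graded derivations are determined by their action on $C^{\infty}(E)$ and $\Gamma(TE)$ to reduce the chain-map identity to the generators $C^{\infty}(E)$, $\Gamma(\mathbb{V})$, $\Gamma(\mathbb{H})$, verifying it there by means of the structure equations \eqref{CC1}--\eqref{CC4}. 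The only differences are cosmetic: the direction of the isomorphism, and minor bookkeeping of which structure equation enters in which bidegree.
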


\begin{proof}
Consider the decomposition of multivector fields \eqref{Dec}. Note that each $A\in\Gamma(\wedge^{p,q}TE)$ can be viewed as a $C^{\infty}(B)$ $p$-linear skew-symmetric map $A:\Gamma(\mathbb{V}^{0})\times\cdots\times\Gamma(\mathbb{V}^{0})\rightarrow\Gamma(\wedge^{q}\mathbb{V})$.
Define $\flat_{\sigma}A\in\mathfrak{M}^{p,q}$ by
\begin{equation}\label{eq:IsoCompx}
  (\flat_{\sigma}A)(u_{1},\ldots,u_{p}):=(-1)^{p}A((\pi^{*}\sigma)^{\flat}\operatorname{hor}^{\gamma}u_{1},\ldots,(\pi^{*}\sigma)^{\flat}\operatorname{hor}^{\gamma}u_{p}).
\end{equation}
for any $u_{i}\in\mathfrak{X}(B)$. We claim that the map $\flat_{\sigma}:\Gamma(\wedge TE)\rightarrow\mathfrak{M}$ is a cochain complex isomorphism. Since $(\pi^{*}\sigma)^{\flat}|_{\mathbb{H}}=-\left(\Pi_{2,0}|_{\mathbb{V}^{0}}\right)^{-1}$ is a vector bundle isomorphism, it follows that $\flat_{\sigma}$ is an exterior algebra isomorphism. By the property that every graded derivation of $\Gamma(\wedge TE)$ is determined by its action on $C^{\infty}(E)$ and $\Gamma(TE)$, it suffices to show that $\flat_{\sigma}\circ\delta_{\Pi} = \partial\circ\flat_{\sigma}$ holds on $C^{\infty}(E)$, $\Gamma(\mathbb{V})$ and $\Gamma(\mathbb{H})$.

For every $f\in C^{\infty}(E)$, we have $(\flat_{\sigma}\circ\delta_{\Pi})(f) = \flat_{\sigma}[\Pi_{2,0},f]+[P,f]$ and $(\partial\circ\flat_{\sigma})(f) = \partial^{\gamma}_{1,0}f + [P,f]$. Moreover, $\flat_{\sigma}[\Pi_{2,0},f](u) = df(\operatorname{hor}^{\gamma}u)=\partial^{\gamma}_{1,0}f(u)$.

Next, let $X\in\Gamma(\mathbb{V})$. By bigrading arguments, the equality $(\flat_{\sigma}\circ\delta_{\Pi})(X) = (\partial\circ\flat_{\sigma})(X)$ splits into three equations: $\flat_{\sigma}[\Pi,X]_{1,1} = \partial^{\gamma}_{1,0}X$, $\flat_{\sigma}[\Pi,X]_{0,2} = \delta_{P}X$, and $\flat_{\sigma}[\Pi,X]_{2,0} = -\mathrm{ad}_{\sigma}X$. For the first equation, by definition, we have $\flat_{\sigma}[\Pi,X]_{1,1}(u) = [\operatorname{hor}^{\gamma}u,X] = \partial^{\gamma}_{1,0}X(u)$. The second one holds because of $[\Pi_{2,0},X]_{0,2}=0$. The last equation follows from $\flat_{\sigma}[\Pi,X]_{2,0}(u,v) = [X,\sigma(u,v)] = -\mathrm{ad}_{\sigma}X(u,v)$.

Finally, for $X=\operatorname{hor}^{\gamma}u$, $u\in\mathfrak{X}(B)$, the equality $(\flat_{\sigma}\circ\delta_{\Pi})(X) = (\partial\circ\flat_{\sigma})(X)$ splits into the following relations: $\partial^{\gamma}_{1,0}\flat_{\sigma}(X) = \flat_{\sigma}[\Pi,X]_{2,0}$, $\delta_{P}\flat_{\sigma}(X) = \flat_{\sigma}[\Pi,X]_{1,1}$, and $0=\flat_{\sigma}[\Pi,X]_{0,2}$. The verification of these equalities is straightforward by using the structure equations \eqref{CC1}-\eqref{CC4}.
\end{proof}

As a consequence of Proposition \ref{prop:IsoCompx}, we conclude that the infinitesimal automorphisms of the coupling Poisson structure $\Pi$ are determined by the 1-cocycles $\eta=\eta_{1,0}+\eta_{0,1}\in Z_{\partial}^{1}$ of $\partial$ which are the solutions to the equations:
\begin{align}
\partial_{0,1}^{P}(\eta_{0,1})&=0, \label{EQ1}\\
\partial_{1,0}^{\gamma}(\eta_{0,1})+\partial_{0,1}^{P}(\eta_{1,0})&=0, \label{EQ2}\\
\partial_{1,0}^{\gamma}(\eta_{1,0})+\partial_{2,-1}^{\sigma}(\eta_{0,1})&=0.\label{EQ3}
\end{align}
In the next section, we describe the infinitesimal automorphisms of coupling Poisson structures in terms of the solutions of these equations.

\paragraph{Coupling Neighborhood of a Symplectic Leaf} Let $(M,\Psi)$ be a Poisson manifold and $B\subset M$ an embedded symplectic leaf. Let $\pi:E\rightarrow B,$ $E=T_{B}M\diagup TB$ be the normal bundle of the leaf. By a tubular neighborhood of a symplectic leaf $B$, we mean an open neighborhood $N$ of $B$ in $M$ together with an exponential map $\mathbf{f}:U\rightarrow N$, that is, a diffeomorphism from an open neighborhood $U$  of the zero section $B\hookrightarrow E$ onto $N$ satisfying the conditions: $\mathbf{f}|_{B}=\operatorname{id}_{B}$ and $\nu\circ d_{B}\mathbf{f}=\tau $. Here, $\tau:T_{B}E\rightarrow E$ is the projection along $TB$ according to the decomposition $T_{B}E=TB\oplus E$ and  $\mathbf{\nu}:T_{B}M\rightarrow E$ is the natural projection. These properties imply that the differential $d_{B}\mathbf{f}:T_{B}E\rightarrow  T_{B}M$ sends the fibers of the normal bundle to transverse subspaces to the leaf $B\subset M$, $T_{B}M=TB\oplus(d_{B}\mathbf{f})(E)$.

\begin{definition}
A tubular neighborhood $(N,\mathbf{f})$ of the symplectic leaf $B$ of $(M,\Psi)$ is said to be a \emph{coupling neighborhood} if the pull-back $\Pi:=\mathbf{f}^{\ast}(\Psi|_{N})$ is a coupling Poisson structure on the fiber bundle $\pi_{U}:U\rightarrow B$.
\end{definition}

Given a coupling neighborhood $(N,\mathbf{f})$ of $B$, we have the bigraded decomposition $\Pi=\Pi_{2,0}+\Pi_{0,2}$. Hence, $\Psi|_{N}=\Psi_{H}+\Psi_{V}$, where $\Psi_{H}=\mathbf{f}_{\ast}\Pi_{2,0}$ is a bivector field on $N$ of constant rank, $\operatorname{rank}\Psi_{H}=\dim B$, and $\Psi_{V}=$ $\mathbf{f}_{\ast}\Pi_{0,2}$ is a Poisson tensor on $N$ vanishing at $B$ and tangent to the vertical subbundle $d\mathbf{f}(\ker d\pi_{U})\subset T_{N}M$ over the tubular neighborhood. The bivector field $\Psi_{V}$ is said to be a \emph{transverse Poisson structure} around the leaf $B$ and can be viewed as the result of gluing the local transverse Poisson structures on the vertical fibers due to the local splitting Weinstein theorem \cite{We-83}. Furthermore, one can show that the different choices of exponential maps lead to isomorphic transverse Poisson structures. Notice that, in the case when the symplectic leaf $B$ is regular, the coupling neighborhood $N$ may be chosen in such a way that the transverse Poisson structure is identically zero, $\Psi_{V}\equiv0$. This follows from the property: $\operatorname{rank}_{m}\Psi=\dim B + \operatorname{rank}_{m}\Psi_{V}$ for every $m\in N$. Observe also that the linearization of $\Pi_{0,2}$ at $B$ gives a vertical fiberwise linear Poisson structure $\Pi_{0,2}^{(1)}$. This Lie algebra is called the \emph{linearized transverse Poisson structure} of the leaf $B$ \cite{We-83}, which is well defined on the whole total space $E$. As a consequence, we get an intrinsic locally trivial Lie-Poisson
bundle $(E,\Pi_{0,2}^{(1)})$ over $B$ whose typical fiber is the co-algebra $\mathfrak{g}^{\ast}$ of a Lie algebra $\mathfrak{g}$ called the \emph{isotropy algebra} of the symplectic leaf.

As is known \cite{Vo-01}, each embedded symplectic leaf $B$ admits a coupling neighborhood and hence, by Proposition \ref{prop:IsoCompx}, the computation of the Poisson cohomology around $B$ is reduced to the study of the bigraded cochain complex $(\mathfrak{M}^{\ast\ast},\partial)$ attributed to a coupling Poisson structure $\Pi$.

\section{Infinitesimal Automorphisms of Coupling Poisson Structures}\label{Sec:InfPoiss}

Suppose we are given a coupling Poisson tensor $\Pi$ on a fiber bundle $\pi:E\rightarrow B$ associated with an integrable geometric data $(\gamma,\sigma,P)$. As we saw in the previous section, the infinitesimal Poisson automorphisms of $\Pi$ are related to the solutions of equations \eqref{EQ1}-\eqref{EQ3}. Our goal is to describe these solutions in terms of the geometric data $(\gamma,\sigma,P)$. To formulate the main results, let us introduce the following objects.

\paragraph{The coboundary operator $\bar{\partial}^{\gamma}$} Consider the space $\operatorname{Casim}(E,P)$ of all Casimir functions of the vertical Poisson tensor $P$ on $E$. It is clear that $\pi^{\ast}C^{\infty}(B)\subseteq\operatorname{Casim}(E,P)$. Define the $C^{\infty}(B)$-submodules $\mathcal{C}^{p}\subseteq\Omega_{B}^{p,0}(E)$ of the form
\[
\mathcal{C}^{p}:=\Omega^{p}(B)\otimes_{C^{\infty}(B)}\operatorname{Casim}(E,P).
\]
In particular, $\mathcal{C}^{0}=\operatorname{Casim}(E,P)$. Since the Poisson vector fields of $P$ preserve the space of Casimir functions, by  \eqref{CC2} and definition \eqref{Def}, we have $\partial_{1,0}^{\gamma}(\mathcal{C}^{p})\subset\mathcal{C}^{p+1}$. Hence one can define the  operator
\begin{equation}\label{DEF}
\bar{\partial}^{\gamma}:=\partial_{1,0}^{\gamma}|_{\mathcal{C}^{p}}.
\end{equation}
Then, by \eqref{Cur2} and the curvature identity \eqref{CC3}, we conclude that $\bar{\partial}^{\gamma}$ is a coboundary operator, $\bar{\partial}^{\gamma}\circ\bar{\partial}^{\gamma}=0$. The $p$-cohomology space of $\bar{\partial}^{\gamma}$ is $H_{\bar{\partial}^{\gamma}}^{p} := \frac{Z_{\bar{\partial}^{\gamma}}^{p}}{B_{\bar{\partial}^{\gamma}}^{p}}$, where $Z_{\bar{\partial}^{\gamma}}^{p}$ and $B_{\bar{\partial}^{\gamma}}^{p}$ are the spaces of $\bar{\partial}^{\gamma}$-closed and $\bar{\partial}^{\gamma}$-exact $p$-forms, respectively.

Consider the Lie algebra $\operatorname{Poiss}(E,\Pi)$ of Poisson vector fields of the coupling Poisson structure $\Pi$. Let $\sharp_{H}:\Omega_{B}^{1,0}(E)\rightarrow\Gamma(\mathbb{H})$ be a linear mapping given by
\[
\sharp_{H}(\alpha):=\Pi_{2,0}^{\sharp}(\pi^{\ast}\alpha).
\]
By the horizontal nondegeneracy of $\Pi_{2,0}$, it follows that $\sharp_{H}:\Omega_{B}^{1,0}(E)\rightarrow\Gamma(\mathbb{H})$ is an isomorphism.

\begin{lemma}\label{lemma:IsoPoissHor}
The image of the space of 1-cocycles $Z_{\bar{\partial}^{\gamma}}^{1}$ under the isomorphism $\sharp_{H}$ coincides with the space of Poisson vector fields of $\Pi$ tangent to the horizontal distribution,
\begin{equation}\label{IJ}
\sharp_{H}(Z_{\bar{\partial}^{\gamma}}^{1})=\Gamma(\mathbb{H})\cap\operatorname{Poiss}(E,\Pi).
\end{equation}
\end{lemma}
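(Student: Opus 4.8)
The plan is to transport the statement through the complex isomorphism $\flat_{\sigma}$ of Proposition \ref{prop:IsoCompx} and then read off the cocycle equations in the lowest bidegree. First I would pin down how the two ``horizontal'' isomorphisms interact. Both $\sharp_{H}\colon\Omega_{B}^{1,0}(E)\to\Gamma(\mathbb{H})$ and the restriction $\flat_{\sigma}|_{\Gamma(\mathbb{H})}\colon\Gamma(\mathbb{H})\to\mathfrak{M}^{1,0}=\Omega_{B}^{1,0}(E)$ are linear isomorphisms between the same pair of spaces, and a short computation using $(\pi^{\ast}\sigma)^{\flat}|_{\mathbb{H}}=-(\Pi_{2,0}^{\sharp}|_{\mathbb{V}^{0}})^{-1}$ together with $(\pi^{\ast}\alpha)(\operatorname{hor}^{\gamma}u)=\alpha(u)$ shows that $\flat_{\sigma}\circ\sharp_{H}=-\operatorname{id}$. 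In particular $\flat_{\sigma}^{-1}=-\sharp_{H}$, so $\sharp_{H}$ carries any linear subspace of $\Omega_{B}^{1,0}(E)$ onto its $\flat_{\sigma}^{-1}$-image, the sign being irrelevant since all the subspaces involved are linear.

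Next I would locate the horizontal Poisson vector fields inside the bigraded complex. Because $\flat_{\sigma}$ is a \emph{bigraded} isomorphism, $\Gamma(\mathbb{H})=\Gamma(\wedge^{1,0}TE)$ corresponds exactly to the pure component $\mathfrak{M}^{1,0}=\Omega_{B}^{1,0}(E)$; and by Proposition \ref{prop:IsoCompx} a field is Poisson precisely when its image is a $\partial$-cocycle. Thus a horizontal $X$ lies in $\operatorname{Poiss}(E,\Pi)$ iff $\eta:=\flat_{\sigma}X$ solves the cocycle equations \eqref{EQ1}--\eqref{EQ3} with $\eta_{0,1}=0$. Under this substitution \eqref{EQ1} is vacuous, while \eqref{EQ2} and \eqref{EQ3} collapse to the two separate conditions $\partial_{0,1}^{P}\eta=0$ and $\partial_{1,0}^{\gamma}\eta=0$ (equivalently, among the three bigraded pieces of $\partial\eta$ the $\partial_{2,-1}^{\sigma}$-term vanishes on bidegree $(1,0)$ and the remaining two must vanish on their own).

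Then I would interpret these two conditions. The first, $\partial_{0,1}^{P}\eta=\delta_{P}\eta=0$, reads $[P,\eta(u)]=0$ for every $u\in\mathfrak{X}(B)$, i.e.\ each value $\eta(u)$ is a Casimir function of $P$; since $\pi^{\ast}C^{\infty}(B)\subseteq\operatorname{Casim}(E,P)$ and the Casimirs form a ring, this is exactly the assertion $\eta\in\mathcal{C}^{1}=\Omega^{1}(B)\otimes_{C^{\infty}(B)}\operatorname{Casim}(E,P)$. Once $\eta\in\mathcal{C}^{1}$, definition \eqref{DEF} gives $\partial_{1,0}^{\gamma}\eta=\bar{\partial}^{\gamma}\eta$, so the second condition becomes $\bar{\partial}^{\gamma}\eta=0$, that is $\eta\in Z_{\bar{\partial}^{\gamma}}^{1}$. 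Hence $\flat_{\sigma}$ maps $\Gamma(\mathbb{H})\cap\operatorname{Poiss}(E,\Pi)$ isomorphically onto $Z_{\bar{\partial}^{\gamma}}^{1}$, and applying $\flat_{\sigma}^{-1}=-\sharp_{H}$ from the first step yields the desired equality \eqref{IJ}.

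All the computations are routine once this framework is set up; the only genuinely delicate point is the bookkeeping in the first step---establishing the precise relation (including sign) between $\sharp_{H}$ and $\flat_{\sigma}|_{\Gamma(\mathbb{H})}$ and confirming that $\flat_{\sigma}$ respects the bidegree so that horizontal fields really correspond to pure $(1,0)$-cochains. The key conceptual observation is that vanishing of $\partial_{0,1}^{P}$ on a $(1,0)$-cochain is literally the Casimir condition carving $\mathcal{C}^{1}$ out of $\Omega_{B}^{1,0}(E)$; this is what permits replacing $\partial_{1,0}^{\gamma}$ by $\bar{\partial}^{\gamma}$ and makes the $1$-cocycle condition for $\bar{\partial}^{\gamma}$ emerge.
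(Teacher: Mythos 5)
Your proof is correct, but it follows a different route from the paper's own proof of this lemma: the authors argue by direct computation with $\Pi^{\sharp}$, using the identity $[\Pi,\Pi^{\sharp}(\pi^{\ast}\alpha)]=-\Pi^{\sharp}(d\pi^{\ast}\alpha)$, the relation $\pi^{\ast}(\bar{\partial}^{\gamma}\alpha)=(d\pi^{\ast}\alpha)_{2,0}$ from \eqref{PROP}, and the characterization of $\mathcal{C}^{1}$ as those $\alpha$ with $\mathbf{i}_{P^{\sharp}\eta}\,d\pi^{\ast}\alpha=0$ for all $\eta\in\Omega^{1}(E)$, so that $[\Pi,\sharp_{H}(\alpha)]=0$ becomes equivalent to $\bar{\partial}^{\gamma}\alpha=0$ without ever invoking Proposition \ref{prop:IsoCompx}. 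Your argument instead transports everything through the cochain isomorphism $\flat_{\sigma}$ and reads off the cocycle equations \eqref{EQ1}--\eqref{EQ3} in pure bidegree $(1,0)$; this is precisely the alternative derivation the authors sketch in the Remark immediately following the lemma, including your key identity $\sharp_{H}=-(\flat_{\sigma}|_{\Gamma(\mathbb{H})})^{-1}$ and the identification $Z_{\bar{\partial}^{\gamma}}^{1}=\Omega_{B}^{1,0}(E)\cap Z_{\partial}^{1}$, which you unpack in more detail (vanishing of the $\operatorname{ad}_{\sigma}$-term on $(1,0)$-cochains because the Schouten bracket of two functions is zero, and the Casimir condition carving $\mathcal{C}^{1}$ out of $\Omega_{B}^{1,0}(E)$). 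What the paper's computation buys is self-containedness and explicit identities such as \eqref{eq:LemaSharp1} that make the role of $d\pi^{\ast}\alpha$ transparent; what your route buys is conceptual economy, since Proposition \ref{prop:IsoCompx} does the heavy lifting and the lemma reduces to bigraded bookkeeping, which is also the viewpoint used later in the spectral-sequence discussion. The only point you gloss slightly is the step asserting that a $(1,0)$-cochain all of whose values $\eta(u)$ are Casimirs actually lies in the tensor product $\Omega^{1}(B)\otimes_{C^{\infty}(B)}\operatorname{Casim}(E,P)$; this needs a local frame and partition-of-unity argument on $B$ (using that $\operatorname{Casim}(E,P)$ is a $C^{\infty}(B)$-submodule of $C^{\infty}(E)$), but it is routine and does not affect the validity of the proof.
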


\begin{proof}
Consider the vector bundle morphism $\Pi^{\sharp}:T^{\ast}M\rightarrow TM$ associated with the bivector field $\Pi$. For every $2$-form $\mu$ $\in\Omega^{2}(E)$, one can associate a bivector field $\Pi^{\sharp}\mu\in\mathfrak{X}^{2}(E)$ defined by $(\Pi^{\sharp}\mu)(\eta_{1},\eta_{2}) :=
\mu(\Pi^{\sharp}\eta_{1},\Pi^{\sharp}\eta_{2})$. Then, one has $[\Pi,\Pi^{\sharp}(\pi^{\ast}\alpha)]=-\Pi^{\sharp}(d\pi^{\ast}\alpha)$.
Moreover, we observe that, for any $\eta\in\Omega^{1}(E)$ and $\mu\in\Omega^{2}(E)$ such that $\eta_{0,1}=0$ and $\mu_{0,2}=0$, the following identities hold: $\Pi^{\sharp}\eta=\Pi_{2,0}^{\sharp}\eta$ and $\Pi_{2,0}^{\sharp}\mu=\Pi^{\sharp}_{2,0}\mu_{2,0}$. Setting $\eta=\pi^{\ast}\alpha$, and $\mu=d\pi^{\ast}\alpha$ and combining these properties with \eqref{PROP}, we get
\begin{equation}\label{eq:LemaSharp1}
[\Pi,\sharp_{H}(\alpha)] = [\Pi,\Pi_{2,0}^{\sharp}(\pi^{\ast}\alpha)] = [\Pi,\Pi^{\sharp}(\pi^{\ast}\alpha)]= -\Pi^{\sharp}(d\pi^{\ast}\alpha)
\end{equation}
and
\begin{equation}\label{eq:LemaSharp2}
\Pi^{\sharp}_{2,0}(\pi^{\ast}(\bar{\partial}^{\gamma}\alpha)) = \Pi^{\sharp}_{2,0}(d\pi^{\ast}\alpha)_{2,0} = \Pi^{\sharp}_{2,0}(d\pi^{\ast}\alpha).
\end{equation}
Finally, observe that $\alpha\in\mathcal{C}^{1}$ if and only if $\mathrm{i}_{P^{\sharp}\eta}d\pi^{*}\alpha=0$ $\forall\eta\in\Omega^{1}(E)$, which is equivalent to $\Pi^{\sharp}(d\pi^{\ast}\alpha)=\Pi^{\sharp}_{2,0}(d\pi^{\ast}\alpha)$. Therefore, from \eqref{eq:LemaSharp1} and \eqref{eq:LemaSharp2}, it follows that $[\Pi,\sharp_{H}(\alpha)] =0$ if and only if $\bar{\partial}^{\gamma}\alpha=0$.
\end{proof}

\begin{remark}
Notice that Lemma \ref{lemma:IsoPoissHor} can be deduced from Proposition \ref{prop:IsoCompx}. Indeed, the cochain complex isomorphism
$\flat_{\sigma}:\Gamma(\wedge TE)\rightarrow\mathfrak{M}$ satisfies $\sharp_{H} = -(\flat_{\sigma}|_{\Gamma(\mathbb{H})})^{-1}$. Thus, $\sharp_{H}(\Omega^{1,0}_{B}(E)\cap Z^{1}_{\partial}) = \Gamma(\mathbb{H})\cap\operatorname{Poiss}(E,\Pi)$. Since $Z_{\bar{\partial}^{\gamma}}^{1}=\Omega^{1,0}_{B}(E)\cap Z^{1}_{\partial}$, the result follows.
\end{remark}

\paragraph{The Lie Algebra $\mathcal{A}^{\gamma}$} Let $\operatorname{Ham}(E,P)\subset\Gamma(\mathbb{V})$ be the Lie algebra of Hamiltonian vector fields of the vertical Poisson tensor $P$ on $E$. Consider the Poisson connection $\gamma$ on $(E,P)$. The set of all vertical Poisson vector fields is a Lie algebra
\[
\operatorname{Poiss}_{V}(E,P):=\{Y\in\Gamma(\mathbb{V})\mid L_{Y}P=0\}
\]
for which $\operatorname{Ham}(E,P)$ is an ideal. Furthermore, by \eqref{Pr} and \eqref{CC2}, we have
\[
[\operatorname{hor}^{\gamma}(u),\operatorname{Poiss}_{V}(E,P)]\subseteq\operatorname{Poiss}_{V}(E,P)\qquad\forall u\in\mathfrak{X}(B).
\]
One can associate to the triple $(E,P,\gamma)$ the subspace $\mathcal{A}^{\gamma}\mathcal{\subset}\operatorname{Poiss}_{V}(E,P)$ of vertical Poisson vector fields determined by the condition
\[
[\operatorname{hor}^{\gamma}(u),\mathcal{A}^{\gamma}]\subseteq\operatorname{Ham}(E,P)\qquad\forall u\in\mathfrak{X}(B)
\]
or, more precisely,
\begin{equation}\label{ALP}
\mathcal{A}^{\gamma}:=\{Y\in\operatorname{Poiss}_{V}(E,P)\mid[\operatorname{hor}^{\gamma}(u),Y]\in\operatorname{Ham}(E,P)\quad\forall u\in\mathfrak{X}(B)\}.
\end{equation}
Observe that $\mathcal{A}^{\gamma}$ is a Lie algebra and $\operatorname{Ham}(E,P)\subseteq\mathcal{A}^{\gamma}$ is an ideal. These properties follow from the identity
\[
[X,P^{\sharp}dF]=P^{\sharp}dL_{X}F
\]
for any Poisson vector field $X$ of $P$. Moreover, for every $Y\in\mathcal{A}^{\gamma}$ there exists a 1-form $\beta_{Y}\in\Omega_{B}^{1,0}(E)=\Omega^{1}(B)\otimes_{C^{\infty}(B)} C^{\infty}(E)$ such that
\begin{equation}\label{3F}
[\operatorname{hor}^{\gamma}(u),Y]=-P^{\sharp}d\beta_{Y}(u) \qquad\forall u\in\mathfrak{X}(B).
\end{equation}
This follows from a partition of unity argument applied to an open coordinate covering of the base $B$, and the fact that $P$ is vertical.

\paragraph{The homomorphism $\rho^{\gamma}:\mathcal{A}^{\gamma}\rightarrow H_{\bar{\partial}^{\gamma}}^{2}$} Given an arbitrary vector field $Y\in\mathcal{A}^{\gamma}$ and fixing a 1-form $\beta_{Y}\in\Omega^{1}(B)\otimes_{C^{\infty}(B)} C^{\infty}(E)$ in \eqref{3F}, we associate to $Y$ an element $\tau_{Y}\in\Omega^{2}(B)\otimes_{C^{\infty}(B)} C^{\infty}(E)$ given by
\begin{equation}\label{2F}
\tau_{Y}:=\partial_{1,0}^{\gamma}\beta_{Y}+L_{Y}\sigma.
\end{equation}
Here, the Lie derivative $L_{Y}:\Omega_{B}^{p,q}(E) \rightarrow \Omega_{B}^{p,q}(E)$ along an arbitrary vertical vector field $Y$
is given by the standard formula $(L_{Y}\eta)(u_{1},\ldots,u_{k}):= L_{Y}\eta(u_{1},\ldots,u_{k})$. Note that we also have the equality $L_{Y}\sigma = \partial_{2,-1}^{\sigma}Y$.

By using the structure equations \eqref{CC1}-\eqref{CC4}, one can show that the 2-form $\tau_{Y}$ takes values in Casimir functions, $\tau_{Y}\in\mathcal{C}^{2}$. Indeed, from \eqref{2F}, we have
\begin{equation}\label{4F}
\tau_{Y}(u_{1},u_{2})  = L_{\operatorname{hor}^{\gamma}(u_{1})}\beta_{Y}(u_{2}) -
L_{\operatorname{hor}^{\gamma}(u_{2})}\beta_{Y}(u_{1}) - \beta_{Y}([u_{1},u_{2}]) + L_{Y}\sigma(u_{1},u_{2}).
\end{equation}
Next, for every Poisson vector field $Z$ of $P$, we have $L_{Z}\circ P^{\sharp}=P^{\sharp}\circ L_{Z}$. \ Using this property, equality \eqref{4F}, the curvature identity \eqref{CC3} and \eqref{3F}, by direct computation we verify that $P^{\sharp}d\tau_{Y}(u_{1},u_{2})=0$.

Now, we observe that the 2-form $\tau_{Y}$ is $\bar{\partial}^{\gamma}$-closed, $\bar{\partial}^{\gamma}\tau_{Y}=0$. Indeed, this can be verified by straightforward computations and by applying again \eqref{2F}, \eqref{Cur2}, \eqref{CC3}, \eqref{3F}, and \eqref{CC4}. Moreover, the cohomology class $[\tau_{Y}]\in H_{\bar{\partial}^{\gamma}}^{2}$ is independent of the choice of $\beta_{Y}$ in \eqref{3F}. To see this, observe that any other element $\beta'_{Y}\in\Omega_{B}^{1,0}(E)$ satisfying \eqref{3F} is of the form $\beta_{Y}'=\beta_{Y}+c_{Y}$ for some $c_{Y}\in\mathcal{C}^{1}$. Then, the corresponding $\tau_{Y}$ and $\tau_{Y}'$ are related by $\tau_{Y}' = \tau_{Y}+\bar{\partial}^{\gamma }c_{Y}$ and hence, $[\tau_{Y}']=[\tau_{Y}]$. So, we have proved the following fact.

\begin{lemma}\label{lemma:DefRho}
There exists an intrinsic homomorphism
\begin{equation}\label{Hom}
\rho^{\gamma}:\mathcal{A}^{\gamma}\rightarrow H_{\bar{\partial}^{\gamma}}^{2}
\end{equation}
which assigns to every vertical vector field $Y\in\mathcal{A}^{\gamma}$ the $\bar{\partial}^{\gamma}$-cohomology class of the 2-form $\tau_{Y}$,
\[
\rho^{\gamma}(Y):=[\tau_{Y}].
\]
\end{lemma}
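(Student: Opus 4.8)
The plan is to verify that the assignment $Y\mapsto[\tau_Y]$ is a well-defined $\R$-linear map into $H_{\bar\partial^\gamma}^{2}$; the substantive content, already sketched in the discussion preceding the statement, is that $\tau_Y$ is a $\bar\partial^\gamma$-cocycle whose class is insensitive to the auxiliary choice of $\beta_Y$. First I would confirm that $\tau_Y$ takes values in Casimir functions, $\tau_Y\in\mathcal{C}^{2}$. Writing $\tau_Y(u_1,u_2)$ out as in \eqref{4F}, this amounts---by verticality of $P$---to the pointwise identity $P^\sharp d\tau_Y(u_1,u_2)=0$, which I would establish by applying $P^\sharp d$ termwise. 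On each $L_{\operatorname{hor}^\gamma(u_i)}\beta_Y(u_j)$ I use that $\operatorname{hor}^\gamma(u_i)$ is a Poisson vector field of $P$ (equation \eqref{CC2}), so that $L_{\operatorname{hor}^\gamma(u_i)}$ commutes with $P^\sharp$, and then feed in \eqref{3F} to replace $P^\sharp d\beta_Y(\cdot)$ by $-[\operatorname{hor}^\gamma(\cdot),Y]$; on the term $L_Y\sigma(u_1,u_2)$ I invoke the curvature identity \eqref{CC3} to rewrite $P^\sharp d\sigma$ through $\operatorname{Curv}^\gamma$. The total then collapses to zero via the Jacobi identity for $\operatorname{hor}^\gamma(u_1),\operatorname{hor}^\gamma(u_2),Y$ and the definition of $\operatorname{Curv}^\gamma$.

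Next I would check $\bar\partial^\gamma\tau_Y=0$. Since $\bar\partial^\gamma=\partial_{1,0}^\gamma|_{\mathcal{C}^\bullet}$ and $\tau_Y=\partial_{1,0}^\gamma\beta_Y+\partial_{2,-1}^\sigma Y$ (using $L_Y\sigma=\partial_{2,-1}^\sigma Y$), applying $\partial_{1,0}^\gamma$ produces the square $(\partial_{1,0}^\gamma)^2\beta_Y$, which \eqref{Cur2} expresses through the curvature, together with $\partial_{1,0}^\gamma L_Y\sigma$, which I handle by commuting $\partial_{1,0}^\gamma$ past $L_Y$ and using $\partial_{1,0}^\gamma\sigma=0$ (equation \eqref{CC4}), \eqref{3F} and \eqref{CC3}; the Bianchi identity $\partial_{1,0}^\gamma\operatorname{Curv}^\gamma=0$ then forces the surviving curvature terms to cancel. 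These two interlocking bracket computations are the \emph{main obstacle}: all four structure equations \eqref{CC1}--\eqref{CC4}, the intertwining $L_Z\circ P^\sharp=P^\sharp\circ L_Z$ for Poisson vector fields $Z$, and the Bianchi identity must be combined so that every curvature term generated by \eqref{Cur2} is matched and annihilated by one coming from \eqref{CC3} and \eqref{3F}. Everything else is formal.

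With $\tau_Y\in Z_{\bar\partial^\gamma}^{2}$ in hand, well-definedness is immediate: two solutions $\beta_Y,\beta_Y'$ of \eqref{3F} have $P^\sharp d(\beta_Y'-\beta_Y)(u)=0$ for all $u$, so their difference $c_Y$ lies in $\mathcal{C}^{1}$, and \eqref{2F} gives $\tau_Y'=\tau_Y+\partial_{1,0}^\gamma c_Y=\tau_Y+\bar\partial^\gamma c_Y$, whence $[\tau_Y']=[\tau_Y]$ and $\rho^\gamma(Y):=[\tau_Y]$ is intrinsic. Linearity follows since \eqref{3F} and \eqref{2F} are linear in $(Y,\beta_Y)$: for $Y=aY_1+bY_2$ with $a,b\in\R$ one takes $\beta_Y=a\beta_{Y_1}+b\beta_{Y_2}$, so $\tau_Y=a\tau_{Y_1}+b\tau_{Y_2}$. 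Should one further want $\rho^\gamma$ to annihilate the Lie bracket (its target being abelian), the Jacobi identity together with the identity $[X,P^\sharp dF]=P^\sharp d\,L_X F$ for Poisson vector fields $X$ yields $\beta_{[Y_1,Y_2]}=L_{Y_1}\beta_{Y_2}-L_{Y_2}\beta_{Y_1}$, and a parallel computation exhibits $\tau_{[Y_1,Y_2]}$ as $\bar\partial^\gamma$-exact.
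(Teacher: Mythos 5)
Your proposal is correct and takes essentially the same route as the paper: the paper's own proof is exactly the discussion preceding the lemma, which verifies $\tau_{Y}\in\mathcal{C}^{2}$ by showing $P^{\sharp}d\tau_{Y}(u_{1},u_{2})=0$ (using \eqref{4F}, the intertwining $L_{Z}\circ P^{\sharp}=P^{\sharp}\circ L_{Z}$ for Poisson vector fields, \eqref{CC3} and \eqref{3F}), checks $\bar{\partial}^{\gamma}\tau_{Y}=0$ via \eqref{2F}, \eqref{Cur2}, \eqref{CC3}, \eqref{3F}, \eqref{CC4}, and proves independence of the choice of $\beta_{Y}$ by the same argument $\beta_{Y}'=\beta_{Y}+c_{Y}$, $c_{Y}\in\mathcal{C}^{1}$, $\tau_{Y}'=\tau_{Y}+\bar{\partial}^{\gamma}c_{Y}$. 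The only cosmetic differences are your invocation of the Bianchi identity, which is not actually needed (the curvature terms generated by \eqref{Cur2} cancel directly against those coming from \eqref{CC3} and \eqref{3F}, as you yourself note), and your extra remarks on linearity and the Lie bracket, which go beyond what the paper records but are harmless.
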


It is easy to see that every Hamiltonian vector field of $P$ belongs to the kernel of $\rho^{\gamma}$ and hence we have the inclusions: \begin{equation}\label{IN}
\operatorname{Ham}(E,P)\subseteq\ker\rho^{\gamma} \subseteq \mathcal{A}^{\gamma}\subseteq\operatorname{Poiss}_{V}(E,P).
\end{equation}

Consider the projection $\operatorname{pr}_{V}:\mathfrak{X}(E)\rightarrow\Gamma(\mathbb{V})$ associated with the splitting $TE=\mathbb{H}\oplus\mathbb{V}$, $\operatorname{pr}_{V}(X)=X_{0,1}$. It is clear that $\ker \operatorname{pr}_{V}=\Gamma(\mathbb{H})$.
\begin{lemma}\label{lemma:KerRho}
The image of $\operatorname{Poiss}(E,\Pi)$ under the projection $\operatorname{pr}_{V}$ coincides with the kernel of $\rho^{\gamma}$,
\[
\operatorname{pr}_{V}(\operatorname{Poiss}(E,\Pi))=\ker\rho^{\gamma}.
\]
\end{lemma}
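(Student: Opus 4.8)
The plan is to transport the problem to the bigraded complex via the isomorphism $\flat_{\sigma}$ of Proposition \ref{prop:IsoCompx} and then read off the kernel of $\rho^{\gamma}$ directly from the cocycle equations \eqref{EQ1}--\eqref{EQ3}. The starting point is the observation that $\flat_{\sigma}$ preserves bidegree and reduces to the identity on vertical vector fields, so that a Poisson vector field $Z = Z_{1,0} + Z_{0,1} \in \operatorname{Poiss}(E,\Pi)$ corresponds to a $1$-cocycle $\eta = \eta_{1,0} + \eta_{0,1}$ with $\eta_{0,1} = Z_{0,1} = \operatorname{pr}_{V}(Z)$ and $\eta_{1,0} = \flat_{\sigma}(Z_{1,0}) \in \Omega_{B}^{1,0}(E)$. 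Thus $\operatorname{pr}_{V}(\operatorname{Poiss}(E,\Pi))$ is precisely the set of vertical fields $Y = \eta_{0,1}$ that can be completed to a solution of \eqref{EQ1}--\eqref{EQ3} by some $\eta_{1,0}$.

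First I would decode the three cocycle equations one by one. Equation \eqref{EQ1} reads $[P,Y] = 0$, that is, $Y \in \operatorname{Poiss}_{V}(E,P)$. Evaluating \eqref{EQ2} on an arbitrary $u \in \mathfrak{X}(B)$ and using $[P,f] = -P^{\sharp}df$ gives $[\operatorname{hor}^{\gamma}(u),Y] = -P^{\sharp}d(\eta_{1,0}(u))$, which is exactly condition \eqref{3F} with the choice $\beta_{Y} = \eta_{1,0}$; hence \eqref{EQ1} and \eqref{EQ2} together say that $Y \in \mathcal{A}^{\gamma}$ and that $\eta_{1,0}$ is an admissible potential $\beta_{Y}$. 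Finally, using $\partial_{2,-1}^{\sigma}(\eta_{0,1}) = L_{Y}\sigma$ together with the definition \eqref{2F}, equation \eqref{EQ3} becomes $\partial_{1,0}^{\gamma}\eta_{1,0} + L_{Y}\sigma = \tau_{Y} = 0$ for this choice of $\beta_{Y}$.

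With this dictionary both inclusions follow at once. If $Z \in \operatorname{Poiss}(E,\Pi)$, then $Y = \operatorname{pr}_{V}(Z)$ lies in $\mathcal{A}^{\gamma}$ and admits the potential $\beta_{Y} = \eta_{1,0}$ for which $\tau_{Y} = 0$, so $\rho^{\gamma}(Y) = [\tau_{Y}] = 0$. Conversely, given $Y \in \ker\rho^{\gamma}$, the class $[\tau_{Y}] \in H_{\bar{\partial}^{\gamma}}^{2}$ vanishes; since replacing $\beta_{Y}$ by $\beta_{Y} - c$ with $c \in \mathcal{C}^{1}$ changes $\tau_{Y}$ into $\tau_{Y} - \bar{\partial}^{\gamma}c$, I can absorb this exact term and choose $\beta_{Y}$ so that $\tau_{Y} = 0$ outright. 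Setting $\eta_{0,1} := Y$ and $\eta_{1,0} := \beta_{Y}$ then produces a $1$-cocycle by the decoding above, whose image $Z = \flat_{\sigma}^{-1}\eta$ is a Poisson vector field with $\operatorname{pr}_{V}(Z) = Y$.

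The genuinely structural step --- the one doing the real work --- is recognizing that the freedom in the choice of potential $\beta_{Y}$ in \eqref{3F} is exactly the $\mathcal{C}^{1}$-ambiguity that makes $\rho^{\gamma}$ well defined on cohomology, so that solvability of \eqref{EQ3} by \emph{some} $\eta_{1,0}$ is equivalent to the vanishing of the whole class $\rho^{\gamma}(Y)$, and not merely of a single representative. The remaining points are bookkeeping rather than substance: one must confirm that $\flat_{\sigma}$ restricts to the identity in vertical bidegree and carefully track the sign conventions in $\partial_{0,1}^{P}$ and $\partial_{2,-1}^{\sigma}$, so that \eqref{EQ2} matches \eqref{3F} and \eqref{EQ3} matches \eqref{2F} on the nose.
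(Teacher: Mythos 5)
Your proof is correct and takes essentially the same route as the paper's: both transport the statement through the isomorphism $\flat_{\sigma}$ of Proposition \ref{prop:IsoCompx}, decode the cocycle equations \eqref{EQ1}--\eqref{EQ3} as membership in $\operatorname{Poiss}_{V}(E,P)$, condition \eqref{3F} with $\beta_{Y}=\eta_{1,0}$, and the vanishing of $\tau_{Y}$, and in the converse direction exploit the $\mathcal{C}^{1}$-ambiguity of $\beta_{Y}$ to absorb the $\bar{\partial}^{\gamma}$-exact term (the paper phrases this as the explicit extension $X_{Y}=-\sharp_{H}(\beta_{Y}-c_{Y})+Y$ in \eqref{VC}). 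No gaps.
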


\begin{proof}
Let $Z\in\operatorname{Poiss}(E,\Pi)$ be an infinitesimal automorphism of $\Pi$. Since the map $\sharp_{H}:\Omega_{B}^{1,0}(E)\rightarrow\Gamma(\mathbb{H})$ is an isomorphism, there exist unique $Y\in\Gamma(\mathbb{V})$ and $\beta\in\Omega_{B}^{1,0}(E)$ such that $Z=-\sharp_{H}\beta + Y$. Let us show that $Y\in\ker\rho^{\gamma}$. If $\flat_{\sigma}:(\mathfrak{X}^{\ast}(E),\delta_{\Pi})\rightarrow(\mathfrak{M}^{\ast\ast},\partial)$ is the cochain complex isomorphism \eqref{eq:IsoCompx}, then $\flat_{\sigma}Z = \beta + Y$. Since $Z\in Z^{1}_{\Pi}(E)$, we have $\beta+Y\in Z^{1}_{\partial}$. Explicitly, this means that $\eta:=\beta+Y$ must satisfy equations \eqref{EQ1}-\eqref{EQ3}. Note that \eqref{EQ1} means that $Y\in\operatorname{Poiss}_{V}(E,P)$. Moreover, by evaluating the left-hand side of \eqref{EQ2} on $u\in\mathfrak{X}(B)$, we get that $\beta$ and $Y$ satisfy \eqref{3F}, so $Y\in\mathcal{A}^{\gamma}$. Finally, \eqref{EQ3} implies that $\tau_{Y}=0$ and hence, $\rho^{\gamma}(Y)=0$, as desired. Conversely, pick an arbitrary $Y\in\ker\rho^{\gamma}$. Since $\ker\rho^{\gamma}\subseteq\mathcal{A}^{\gamma}$, there exists a 1-form $\beta_{Y}\in\Omega_{B}^{1,0}(E)$ satisfying \eqref{3F}. Next, by the definition of $\rho^{\gamma}$, there exists a primitive $c_{Y}\in\mathcal{C}^{1}$ of the $2$-cocycle $\tau_{Y}\in Z^{2}_{\bar{\partial}^{\gamma}}$ in \eqref{2F} so that $\bar{\partial}^{\gamma}c_{Y}=\tau_{Y}$. Then, one can easily verify that
\begin{equation}\label{VC}
X_{Y}:=-\sharp_{H}(\beta_{Y}-c_{Y})+Y\in\operatorname{Poiss}(E,\Pi).
\end{equation}
This means that every element $Y\in\ker\rho^{\gamma}$ can be extended to a Poisson vector field $X$ of $\Pi$ in the sense that $X_{0,1}=Y$.
\end{proof}

\begin{corollary}
The Poisson vector fields of the coupling Poisson structure $\Pi$ are of the form
\begin{equation}\label{PVF1}
X=\sharp_{H}(\alpha)+X_{Y},
\end{equation}
where $\alpha\in Z_{\bar{\partial}^{\gamma}}^{1}$ and $Y\in\ker \rho^{\gamma}\subset\mathcal{A}^{\gamma}$ \ are
arbitrary elements.
\end{corollary}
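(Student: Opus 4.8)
The plan is to read off the decomposition directly from the two structural lemmas already proved, using the fiberwise splitting $TE=\mathbb{H}\oplus\mathbb{V}$ to separate the horizontal and vertical parts of a Poisson vector field.

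First I would take an arbitrary $X\in\operatorname{Poiss}(E,\Pi)$ and set $Y:=\operatorname{pr}_{V}(X)=X_{0,1}$. By Lemma~\ref{lemma:KerRho}, the vertical projection of any Poisson vector field lands in $\ker\rho^{\gamma}$, so $Y\in\ker\rho^{\gamma}\subset\mathcal{A}^{\gamma}$. The proof of that same lemma already furnishes an explicit lift: the vector field $X_{Y}$ defined in \eqref{VC} is a Poisson vector field of $\Pi$ satisfying $\operatorname{pr}_{V}(X_{Y})=Y$.

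Next I would form the difference $X-X_{Y}$. Since $\operatorname{Poiss}(E,\Pi)$ is an $\mathbb{R}$-vector space, this difference is again a Poisson vector field, and by construction $\operatorname{pr}_{V}(X-X_{Y})=Y-Y=0$, whence $X-X_{Y}\in\ker\operatorname{pr}_{V}=\Gamma(\mathbb{H})$. Thus $X-X_{Y}\in\Gamma(\mathbb{H})\cap\operatorname{Poiss}(E,\Pi)$, and Lemma~\ref{lemma:IsoPoissHor} provides a unique $\alpha\in Z_{\bar{\partial}^{\gamma}}^{1}$ with $X-X_{Y}=\sharp_{H}(\alpha)$. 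Rearranging gives $X=\sharp_{H}(\alpha)+X_{Y}$, the asserted form.

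For the converse I would run the argument backwards: given any $\alpha\in Z_{\bar{\partial}^{\gamma}}^{1}$ and any $Y\in\ker\rho^{\gamma}$, the term $\sharp_{H}(\alpha)$ is Poisson by Lemma~\ref{lemma:IsoPoissHor} and $X_{Y}$ is Poisson by the construction \eqref{VC}, so their sum lies in $\operatorname{Poiss}(E,\Pi)$ by linearity. I do not expect a genuine obstacle, since the two lemmas have already carried out all the substantive work; the only point worth flagging is that the lift $X_{Y}$, and hence $\alpha$, depends on the auxiliary choices of $\beta_{Y}$ and of the primitive $c_{Y}$ entering \eqref{VC}, so the decomposition is not canonical---but this is harmless for the statement, which only asserts that every Poisson vector field arises in this way for suitable $\alpha$ and $Y$.
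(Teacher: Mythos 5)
Your proposal is correct and follows exactly the route the paper intends: the corollary is stated without a separate proof precisely because it is the combination of Lemma~\ref{lemma:IsoPoissHor} (horizontal Poisson fields are $\sharp_{H}(Z_{\bar{\partial}^{\gamma}}^{1})$) and Lemma~\ref{lemma:KerRho} (whose proof supplies the explicit lift $X_{Y}$ in \eqref{VC} with $\operatorname{pr}_{V}(X_{Y})=Y$), applied to the difference $X-X_{Y}$ just as you do. Your closing remark that the decomposition is non-canonical, depending on the choices of $\beta_{Y}$ and the primitive $c_{Y}$, is also accurate and consistent with the paper's subsequent passage to the short exact sequence \eqref{X1} and the cohomological splitting \eqref{PC}, where this ambiguity is quotiented out.
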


In particular, it follows that a Poisson vector field $X\in\operatorname{Poiss}(E,\Pi)$ is tangent to the symplectic foliation of $\Pi$ if and only if $\operatorname{pr}_{V}(X)$ is tangent to the symplectic foliation of the vertical Poisson structure $P$.

By Lemma \ref{lemma:DefRho} and Lemma \ref{lemma:KerRho}, we conclude that $\operatorname{Poiss}(E,\Pi)$ fits into the short
exact sequence of vector spaces
\begin{equation}\label{X1}
0\rightarrow Z_{\bar{\partial}^{\gamma}}^{1}\overset{\sharp_{H}}{\longrightarrow}\operatorname{Poiss}(E,\Pi)\overset{\operatorname{pr}_{V}}{\longrightarrow}\ker\rho^{\gamma}\rightarrow0.
\end{equation}
Summarizing the above considerations, we get the following splitting theorem for infinitesimal automorphisms of a coupling Poisson structure.

\begin{theorem}
Let $\Pi=\Pi_{2,0}+P$ be the coupling Poisson tensor on $E$ and $(\gamma,\sigma,P)$ its integrable geometric data. Let $(\mathcal{A}^{\gamma},\bar{\partial}^{\gamma},\rho^{\gamma})$ be the associated set up. Then, there is an isomorphism
\begin{equation}\label{eq:PoissSplit}
  \operatorname{Poiss}(E,\Pi)\cong Z_{\bar{\partial}^{\gamma}}^{1}\oplus\ker\rho^{\gamma}.
\end{equation}
\end{theorem}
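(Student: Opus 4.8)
The plan is to read the isomorphism \eqref{eq:PoissSplit} directly off the short exact sequence \eqref{X1}, which has already been assembled from Lemmas \ref{lemma:IsoPoissHor}, \ref{lemma:DefRho}, and \ref{lemma:KerRho}. Recall that \eqref{X1} asserts
\[
0\rightarrow Z_{\bar{\partial}^{\gamma}}^{1}\overset{\sharp_{H}}{\longrightarrow}\operatorname{Poiss}(E,\Pi)\overset{\operatorname{pr}_{V}}{\longrightarrow}\ker\rho^{\gamma}\rightarrow0
\]
is exact. Since we are working in the category of vector spaces (over $\mathbb{R}$), every short exact sequence splits, so the existence of the isomorphism \eqref{eq:PoissSplit} is immediate once \eqref{X1} is established. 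Thus the entire content of the theorem is already packaged in the exact sequence, and the proof amounts to verifying exactness at each of the three nonzero spots and then invoking the splitting lemma for vector spaces.

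First I would check exactness at $Z_{\bar{\partial}^{\gamma}}^{1}$, i.e.\ injectivity of $\sharp_{H}$: this is immediate, since $\sharp_{H}:\Omega_{B}^{1,0}(E)\rightarrow\Gamma(\mathbb{H})$ is an isomorphism (by the horizontal nondegeneracy of $\Pi_{2,0}$) and hence injective on the subspace $Z_{\bar{\partial}^{\gamma}}^{1}$. By Lemma \ref{lemma:IsoPoissHor}, $\sharp_{H}(Z_{\bar{\partial}^{\gamma}}^{1})=\Gamma(\mathbb{H})\cap\operatorname{Poiss}(E,\Pi)$, so the image indeed lands in $\operatorname{Poiss}(E,\Pi)$ and consists precisely of the horizontal Poisson vector fields. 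Next, exactness at $\ker\rho^{\gamma}$, i.e.\ surjectivity of $\operatorname{pr}_{V}$ onto $\ker\rho^{\gamma}$, is exactly the statement of Lemma \ref{lemma:KerRho}, whose proof also shows (via the explicit lift $X_{Y}$ in \eqref{VC}) that every $Y\in\ker\rho^{\gamma}$ extends to a genuine Poisson vector field. Finally, exactness at the middle term $\operatorname{Poiss}(E,\Pi)$ requires $\im\sharp_{H}=\ker\operatorname{pr}_{V}|_{\operatorname{Poiss}(E,\Pi)}$: the inclusion $\im\sharp_{H}\subseteq\ker\operatorname{pr}_{V}$ holds because $\sharp_{H}$ takes values in $\Gamma(\mathbb{H})=\ker\operatorname{pr}_{V}$, while the reverse inclusion follows because any $Z\in\operatorname{Poiss}(E,\Pi)$ with $\operatorname{pr}_{V}(Z)=0$ is horizontal, hence by Lemma \ref{lemma:IsoPoissHor} lies in $\sharp_{H}(Z_{\bar{\partial}^{\gamma}}^{1})$.

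To realize the splitting concretely rather than merely abstractly, I would use the decomposition $Z=\sharp_{H}(\alpha)+X_{Y}$ from \eqref{PVF1}, which expresses every $Z\in\operatorname{Poiss}(E,\Pi)$ as a sum of a horizontal Poisson vector field $\sharp_{H}(\alpha)$ with $\alpha\in Z_{\bar{\partial}^{\gamma}}^{1}$ and the canonical lift $X_{Y}$ of $Y:=\operatorname{pr}_{V}(Z)\in\ker\rho^{\gamma}$. The map $Z\mapsto(\alpha,Y)$ is the desired isomorphism: it is well-defined and linear because $\alpha=\sharp_{H}^{-1}(Z-X_{Y})$ is determined uniquely once $Y=\operatorname{pr}_{V}(Z)$ and the lift $X_{Y}$ are fixed, and it is bijective by the exactness established above together with the linearity of $Y\mapsto X_{Y}$.

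I do not expect any serious obstacle here: the theorem is a formal consequence of the three lemmas, and the only genuine mathematical work—constructing the transversal lift and proving surjectivity of $\operatorname{pr}_{V}$—has already been carried out in Lemma \ref{lemma:KerRho}. The one point deserving a sentence of care is the well-definedness of the splitting map, namely confirming that the assignment $Y\mapsto X_{Y}$ can be chosen \emph{linearly} in $Y$ (so that the resulting decomposition is an isomorphism of vector spaces and not merely a bijection of sets). This follows since the choices involved—the $1$-form $\beta_{Y}$ solving \eqref{3F} and the primitive $c_{Y}$ with $\bar{\partial}^{\gamma}c_{Y}=\tau_{Y}$—can be made linearly, for instance by fixing a linear section of each of the surjections in question; alternatively, one simply invokes that a short exact sequence of vector spaces always admits a linear splitting, which suffices to conclude \eqref{eq:PoissSplit}.
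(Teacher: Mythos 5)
Your proposal is correct and follows essentially the same route as the paper: the paper likewise assembles the short exact sequence \eqref{X1} from Lemmas \ref{lemma:IsoPoissHor}, \ref{lemma:DefRho} and \ref{lemma:KerRho} (with the explicit lift $X_{Y}$ of \eqref{VC} providing surjectivity of $\operatorname{pr}_{V}$) and then reads off \eqref{eq:PoissSplit} as the splitting of a short exact sequence of vector spaces. Your extra remark on choosing the splitting linearly is a fair point of care, but, as you note, it is subsumed by the standard fact that every such sequence of vector spaces admits a linear splitting, which is all the theorem asserts.
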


Now, let us consider the spaces of Hamiltonian vector fields $\operatorname{Ham}(E,\Pi)$ and $\operatorname{Ham}(E,P)$ of the Poisson structures $\Pi$ and $P$, respectively. Recall that the space $B_{\bar{\partial}^{\gamma}}^{1}$ consists of $\bar{\partial}^{\gamma}$-exact 1-forms $\bar\partial^{\gamma}k$, with $k\in\operatorname{Casim}(E,P)$. Then, by using \eqref{PROP} and the fact that $P^{\sharp}(dk)=0$, we get
\[
\sharp_{H}(\bar\partial^{\gamma}k)=\Pi_{2,0}^{\sharp}(\pi^{\ast}(\bar\partial^{\gamma}k)) = \Pi_{2,0}^{\sharp}(dk)_{1,0} = \Pi_{2,0}^{\sharp}dk = \Pi^{\sharp}dk.
\]
This shows that the image of $B_{\bar{\partial}^{\gamma}}^{1}$ under the mapping $\sharp_{H}$ belongs to $\operatorname{Ham}(E,\Pi)$ and is of the form
\[
\sharp_{H}(B_{\bar{\partial}^{\gamma}}^{1})=\{\Pi_{2,0}^{\sharp}dk\mid k\in\operatorname{Casim}(E,P)\}.
\]
Furthermore, we have the following result.

\begin{proposition}
There is a short exact sequence:
\begin{equation}\label{X2}
0\rightarrow B_{\bar{\partial}^{\gamma}}^{1}\overset{\sharp_{H}}{\longrightarrow}\operatorname{Ham}(E,\Pi)\overset{\operatorname{pr}_{V}}{\longrightarrow}\operatorname{Ham}(E,P)\rightarrow0.
\end{equation}
\end{proposition}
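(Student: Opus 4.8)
The plan is to verify exactness of \eqref{X2} at each of its three terms, after first confirming that both arrows are well defined. The one computational input I would rely on throughout is the bigraded splitting of a Hamiltonian vector field: writing $\Pi=\Pi_{2,0}+P$ gives, for every $F\in C^{\infty}(E)$,
\[
X_{F}=\Pi^{\sharp}dF=\Pi_{2,0}^{\sharp}dF+P^{\sharp}dF ,
\]
and since $\Pi_{2,0}\in\Gamma(\wedge^{2}\mathbb{H})$ and $P\in\Gamma(\wedge^{2}\mathbb{V})$, the two summands are respectively horizontal and vertical. This is precisely the bidegree decomposition of $X_{F}$, so that $\operatorname{pr}_{V}(X_{F})=P^{\sharp}dF\in\operatorname{Ham}(E,P)$; this already shows the right-hand arrow is well defined and lands in $\operatorname{Ham}(E,P)$. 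The left-hand arrow is well defined by the computation immediately preceding the statement, namely $\sharp_{H}(\bar{\partial}^{\gamma}k)=\Pi^{\sharp}dk\in\operatorname{Ham}(E,\Pi)$ for $k\in\operatorname{Casim}(E,P)$.

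Next I would dispatch the two easy ends. For surjectivity of $\operatorname{pr}_{V}$ onto $\operatorname{Ham}(E,P)$, given $P^{\sharp}dg$ I would simply take the same Hamiltonian $X_{g}=\Pi^{\sharp}dg\in\operatorname{Ham}(E,\Pi)$ and read off $\operatorname{pr}_{V}(X_{g})=P^{\sharp}dg$ from the splitting above. For injectivity of $\sharp_{H}$ on $B_{\bar{\partial}^{\gamma}}^{1}$, I would invoke the fact recalled before Lemma~\ref{lemma:IsoPoissHor} that $\sharp_{H}:\Omega_{B}^{1,0}(E)\to\Gamma(\mathbb{H})$ is an isomorphism, so that it is injective on the subspace $B_{\bar{\partial}^{\gamma}}^{1}\subseteq\mathcal{C}^{1}\subseteq\Omega_{B}^{1,0}(E)$.

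The remaining task, exactness in the middle, is the one I would treat most carefully, though it too is short. For $\operatorname{im}\sharp_{H}\subseteq\ker\operatorname{pr}_{V}$, every element of $\sharp_{H}(B_{\bar{\partial}^{\gamma}}^{1})$ has the form $\Pi_{2,0}^{\sharp}dk\in\Gamma(\mathbb{H})$ and is therefore killed by $\operatorname{pr}_{V}$. For the reverse inclusion, I would start from $X_{F}\in\operatorname{Ham}(E,\Pi)$ with $\operatorname{pr}_{V}(X_{F})=0$; by the splitting this forces $P^{\sharp}dF=0$, i.e.\ $F\in\operatorname{Casim}(E,P)$, and then the identity $\sharp_{H}(\bar{\partial}^{\gamma}F)=\Pi^{\sharp}dF=X_{F}$ exhibits $X_{F}$ in the image, since $\bar{\partial}^{\gamma}F\in B_{\bar{\partial}^{\gamma}}^{1}$.

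I do not expect a genuine obstacle: once the splitting $\Pi^{\sharp}dF=\Pi_{2,0}^{\sharp}dF+P^{\sharp}dF$ is recorded, the whole statement reduces to bookkeeping with the isomorphism $\sharp_{H}$ and the identity $\sharp_{H}\circ\bar{\partial}^{\gamma}=\Pi^{\sharp}\circ d$ on Casimir functions. The only point I would double-check is the equivalence $P^{\sharp}dF=0\Leftrightarrow F\in\operatorname{Casim}(E,P)$ used to identify the kernel of $\operatorname{pr}_{V}|_{\operatorname{Ham}(E,\Pi)}$ with Hamiltonians of Casimirs, but this is immediate from the definition of a Casimir function of $P$.
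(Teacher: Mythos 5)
Your proof is correct and follows essentially the same route as the paper: both arguments rest on the bigraded splitting $\Pi^{\sharp}dF=\Pi_{2,0}^{\sharp}dF+P^{\sharp}dF$, the injectivity of $\sharp_{H}$ coming from the nondegeneracy of $\Pi_{2,0}^{\sharp}$ on $\mathbb{V}^{0}$, and the identification of $\ker\operatorname{pr}_{V}|_{\operatorname{Ham}(E,\Pi)}$ with Hamiltonian vector fields of Casimir functions via $\sharp_{H}(\bar{\partial}^{\gamma}k)=\Pi^{\sharp}dk$. You merely spell out the middle-exactness bookkeeping in slightly more detail than the paper does; no gap.
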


\begin{proof}
By the nondegeneracy property of $\Pi_{2,0}^{\sharp}$, the mapping $\sharp_{H}$ is injective and hence, $\ker\sharp_{H}=\{0\}$. On the other hand, by the bigraded decomposition $\Pi=\Pi_{2,0}+P$, we conclude that $\Pi^{\sharp}df=\Pi_{2,0}^{\sharp}df+P^{\sharp}df$. This implies the equality  $\operatorname{pr}_{V}(\operatorname{Ham}(E,\Pi))=\operatorname{Ham}(E,P)$. It follows also that $\operatorname{pr}_{V}(\Pi^{\sharp}df)=P^{\sharp}df=0$ if and only if $f\in\operatorname{Casim}(E,P)$ and hence, $\ker\operatorname{pr}_{V}=\operatorname{Im}\sharp_{H}$.
\end{proof}

We observe that a necessary condition for a vector field $X$ being Hamiltonian relative to $\Pi$ and a function $f$ is that, the vertical part $X_{0,1}$ of $X$ is Hamiltonian relative to $P$ and the same function $f$. Notice also that the Poisson vector field $\sharp_{H}(\alpha)$ is Hamiltonian relative to $\Pi$ if and only if $\alpha$ is $\bar{\partial}^{\gamma}$-exact.

By \eqref{X1}, \eqref{X2}, we have the following short exact sequence of the cohomology spaces
\[
0\rightarrow\frac{Z_{\bar{\partial}^{\gamma}}^{1}}{B_{\bar{\partial}^{\gamma}}^{1}}\overset{\sharp_{H}}{\longrightarrow}\frac{\operatorname{Poiss}(E,\Pi)}{\operatorname{Ham}(E,\Pi)}\overset{\operatorname{pr}_{V}}{\longrightarrow}\frac{\ker\rho^{\gamma}}{\operatorname{Ham}(E,P)}\rightarrow0.
\]

So, we arrive at the main result.

\begin{theorem}
Let $H_{\Pi}^{1}(E)$ be the first Poisson cohomology of the coupling Poisson tensor $\Pi$ on a fiber bundle $\pi:E\rightarrow B$. Then, there exists an isomorphism
\begin{equation}\label{PC}
H_{\Pi}^{1}(E)\cong H_{\bar{\partial}^{\gamma}}^{1}\oplus\frac{\ker\rho^{\gamma}}{\operatorname{Ham}(E,P)}.
\end{equation}
\end{theorem}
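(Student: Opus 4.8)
The plan is to deduce the decomposition \eqref{PC} formally from the two short exact sequences \eqref{X1} and \eqref{X2}, so that no new geometric input is needed beyond what the preceding lemmas already supply. The starting observation is that, by the very definition of Poisson cohomology, $H^{1}_{\Pi}(E)=\operatorname{Poiss}(E,\Pi)/\operatorname{Ham}(E,\Pi)$, and likewise $H^{1}_{\bar{\partial}^{\gamma}}=Z^{1}_{\bar{\partial}^{\gamma}}/B^{1}_{\bar{\partial}^{\gamma}}$; thus \eqref{PC} is an assertion purely about these three quotient spaces.

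First I would set \eqref{X1} and \eqref{X2} side by side as the two exact rows of a commutative diagram whose vertical arrows are the canonical inclusions $B^{1}_{\bar{\partial}^{\gamma}}\hookrightarrow Z^{1}_{\bar{\partial}^{\gamma}}$, $\operatorname{Ham}(E,\Pi)\hookrightarrow\operatorname{Poiss}(E,\Pi)$ and $\operatorname{Ham}(E,P)\hookrightarrow\ker\rho^{\gamma}$. The two squares commute because the arrows in \eqref{X2} are simply the restrictions of $\sharp_{H}$ and $\operatorname{pr}_{V}$ occurring in \eqref{X1}. Since all three vertical maps are injective, the snake lemma collapses to an exact sequence of cokernels, namely
\[
0\rightarrow\frac{Z^{1}_{\bar{\partial}^{\gamma}}}{B^{1}_{\bar{\partial}^{\gamma}}}\xrightarrow{\ \sharp_{H}\ }\frac{\operatorname{Poiss}(E,\Pi)}{\operatorname{Ham}(E,\Pi)}\xrightarrow{\ \operatorname{pr}_{V}\ }\frac{\ker\rho^{\gamma}}{\operatorname{Ham}(E,P)}\rightarrow0,
\]
which is exactly the sequence displayed just before the statement. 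The checks I would carry out here are that $\sharp_{H}$ and $\operatorname{pr}_{V}$ descend to the quotients (immediate from the compatibility built into \eqref{X2}) and exactness at the central term.

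The last step is to split this sequence: being a short exact sequence of $\mathbb{R}$-vector spaces, it splits automatically, which yields the direct sum \eqref{PC}. If an explicit splitting is wanted, I would take the section of $\operatorname{pr}_{V}$ on cohomology given by $[Y]\mapsto[X_{Y}]$, with $X_{Y}$ the Poisson field constructed in \eqref{VC}, and verify that altering $Y$ by a $P$-Hamiltonian field alters $X_{Y}$ by a $\Pi$-Hamiltonian field, so that the assignment is well defined on $\ker\rho^{\gamma}/\operatorname{Ham}(E,P)$. I expect no serious obstacle, since all the analytic and geometric content has already been packaged into Lemmas \ref{lemma:IsoPoissHor}--\ref{lemma:KerRho} and into \eqref{X1}--\eqref{X2}; the only point demanding mild care is exactness at the central term of the quotient sequence, where one must use the surjectivity of $\operatorname{pr}_{V}\colon\operatorname{Ham}(E,\Pi)\to\operatorname{Ham}(E,P)$ from \eqref{X2} to correct a Poisson field whose vertical part is merely $P$-Hamiltonian back into the image of $\sharp_{H}$, together with the fact that $\sharp_{H}(\alpha)$ is $\Pi$-Hamiltonian precisely when $\alpha$ is $\bar{\partial}^{\gamma}$-exact.
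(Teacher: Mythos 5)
Your proof is correct and takes essentially the same approach as the paper: the paper also passes from the two short exact sequences \eqref{X1} and \eqref{X2} to the short exact sequence of the quotient (cohomology) spaces and then obtains \eqref{PC} from the automatic splitting of short exact sequences of real vector spaces. Your snake-lemma formalization and the explicit section $[Y]\mapsto[X_{Y}]$ simply make precise the diagram chase that the paper leaves implicit.
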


By taking into account the facts in Section \ref{Sec:coupling}, as a consequence of this theorem, we derive the statement of Claim \ref{Claim:Semilocal}.

\paragraph{Regular symplectic leaves} As we have mentioned above, in the regular case, formula \eqref{PC} coincides with \eqref{F1}. Recall that the semilocal model for a Poisson structure $\Psi$ on $M$ around an embedded regular symplectic leaf $(B,\omega_{B})$ is represented by a coupling Poisson structure $\Pi$ on the normal vector bundle $\pi:E\rightarrow B$ with associated geometric data of the form $(\gamma^{0},\sigma=\omega_{B}\otimes 1+C,P=0)$ and having the zero section $B\hookrightarrow E$ as a symplectic leaf. Here,
$\gamma^{0}$ is a flat Ehresmann connection on $E$ whose horizontal distribution is just the tangent bundle $T\mathcal{S}$ of the symplectic foliation $(\mathcal{S},\omega)$ of $\Pi$. The coupling form $\sigma$ is determined by the symplectic form $\omega_{B}$ of the leaf and a
$\partial_{1,0}^{\gamma^{0}}$-closed 2-form $C\in\Omega^{2,0}_{B}(E)$ vanishing at $B$.

\begin{proposition}
We have the following relations:
\[
H^{1}_{\bar\partial^{\gamma^{0}}} \cong H^{1}_{\mathrm{dR}}(\mathcal{S}) \qquad\text{and}\qquad \ker\rho^{\gamma^{0}} = \left\{Y\in\Gamma_{\mathcal{S}\text{-}\mathrm{pr}}(\mathbb{V})\mid L_{Y}\omega\text{ is }d_{\mathcal{S}}\text{-exact}\right\}.
\]
\end{proposition}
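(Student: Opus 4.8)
The plan is to exploit the two special features of the regular model: the vanishing of the vertical Poisson tensor, $P=0$, and the flatness of the connection $\gamma^{0}$, whose horizontal distribution is $\mathbb{H}=T\mathcal{S}$. First I would observe that $P=0$ forces $\operatorname{Casim}(E,P)=C^{\infty}(E)$, so that $\mathcal{C}^{p}=\Omega^{p}(B)\otimes_{C^{\infty}(B)}C^{\infty}(E)=\Omega_{B}^{p,0}(E)$ and $\bar{\partial}^{\gamma^{0}}=\partial_{1,0}^{\gamma^{0}}$ on all of $\Omega_{B}^{\ast,0}(E)$. Using the identification $\Omega_{B}^{p,0}(E)\cong\Gamma(\wedge^{p}\mathbb{V}^{0})$, $\eta\mapsto\pi^{\ast}\eta$, together with relation \eqref{PROP}, which reads $\pi^{\ast}(\bar{\partial}^{\gamma^{0}}\eta)=\bigl(d(\pi^{\ast}\eta)\bigr)_{p+1,0}$, I would show that under the further identification $\mathbb{V}^{0}\cong(T\mathcal{S})^{\ast}$ induced by $\mathbb{H}=T\mathcal{S}$ the operator $\bar{\partial}^{\gamma^{0}}$ becomes the foliated exterior differential $d_{\mathcal{S}}$. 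Here flatness is essential: since $\operatorname{Curv}^{\gamma^{0}}=0$ the brackets $[\operatorname{hor}^{\gamma^{0}}(u_{i}),\operatorname{hor}^{\gamma^{0}}(u_{j})]$ remain horizontal, so extracting the $(p+1,0)$-component of $d$ is exactly the leafwise Cartan formula. This identifies $(\mathcal{C}^{\ast},\bar{\partial}^{\gamma^{0}})$ with the foliated de Rham complex $(\Gamma(\wedge^{\ast}T^{\ast}\mathcal{S}),d_{\mathcal{S}})$ and yields $H^{p}_{\bar{\partial}^{\gamma^{0}}}\cong H^{p}_{\mathrm{dR}}(\mathcal{S})$ in every degree, in particular the claimed isomorphism for $p=1$.

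Next I would compute $\mathcal{A}^{\gamma^{0}}$ directly from definition \eqref{ALP}. Since $P=0$ we have $\operatorname{Ham}(E,P)=\{0\}$ and $\operatorname{Poiss}_{V}(E,P)=\Gamma(\mathbb{V})$, so the defining condition collapses to $[\operatorname{hor}^{\gamma^{0}}(u),Y]=0$ for all $u\in\mathfrak{X}(B)$. By \eqref{Pr} this bracket is automatically vertical, so requiring it to vanish is the same as requiring it to be horizontal, i.e.\ tangent to $\mathbb{H}=T\mathcal{S}$. Because horizontal lifts generate $\Gamma(\mathbb{H})$ over $C^{\infty}(E)$ and $Y(f)\operatorname{hor}^{\gamma^{0}}(u)$-terms stay horizontal, this is precisely the projectability condition $[Y,\Gamma(T\mathcal{S})]\subseteq\Gamma(T\mathcal{S})$. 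Hence $\mathcal{A}^{\gamma^{0}}=\Gamma_{\mathcal{S}\text{-}\mathrm{pr}}(\mathbb{V})$.

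It then remains to identify $\rho^{\gamma^{0}}$. For $Y\in\mathcal{A}^{\gamma^{0}}$ the bracket $[\operatorname{hor}^{\gamma^{0}}(u),Y]$ vanishes, so \eqref{3F} is solved by $\beta_{Y}=0$ and formula \eqref{2F} reduces to $\tau_{Y}=L_{Y}\sigma$, giving $\rho^{\gamma^{0}}(Y)=[L_{Y}\sigma]\in H^{2}_{\bar{\partial}^{\gamma^{0}}}$. I would then transport this class through the complex isomorphism of the first step. Using that the horizontal $2$-form $\pi^{\ast}\sigma$ restricts on the leaves to the leafwise symplectic form $\omega$ and that $[\operatorname{hor}^{\gamma^{0}}(u),Y]=0$, a short Cartan-formula computation gives, for $u_{1},u_{2}\in\mathfrak{X}(B)$, the equality $(\pi^{\ast}(L_{Y}\sigma))(\operatorname{hor}^{\gamma^{0}}(u_{1}),\operatorname{hor}^{\gamma^{0}}(u_{2}))=Y(\sigma(u_{1},u_{2}))=(L_{Y}\omega)(\operatorname{hor}^{\gamma^{0}}(u_{1}),\operatorname{hor}^{\gamma^{0}}(u_{2}))$, so that $\pi^{\ast}(L_{Y}\sigma)=L_{Y}\omega$ as foliated $2$-forms; note $L_{Y}\omega$ is well defined and $d_{\mathcal{S}}$-closed precisely because $Y$ is projectable. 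Therefore $\rho^{\gamma^{0}}(Y)=0$ if and only if $L_{Y}\omega$ is $d_{\mathcal{S}}$-exact, which combined with $\mathcal{A}^{\gamma^{0}}=\Gamma_{\mathcal{S}\text{-}\mathrm{pr}}(\mathbb{V})$ gives the desired description of $\ker\rho^{\gamma^{0}}$.

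The main obstacle is the last identification: one must carefully track the chain of isomorphisms $\mathcal{C}^{p}\xrightarrow{\pi^{\ast}}\Gamma(\wedge^{p}\mathbb{V}^{0})\cong\Omega^{p}(\mathcal{S})$ and verify both that it intertwines $\bar{\partial}^{\gamma^{0}}$ with $d_{\mathcal{S}}$ and that it carries the $\bar{\partial}^{\gamma^{0}}$-class $[L_{Y}\sigma]$ to the leafwise class $[L_{Y}\omega]$. The flatness of $\gamma^{0}$ (equivalently $\operatorname{Curv}^{\gamma^{0}}=0$, which by \eqref{Cur2} makes $\partial_{1,0}^{\gamma^{0}}$ a genuine coboundary operator and forces the relevant brackets to stay horizontal) is exactly what makes all of these identifications consistent.
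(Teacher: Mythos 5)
Your proof is correct and follows essentially the same route as the paper: identify $(\mathcal{C}^{\ast},\bar{\partial}^{\gamma^{0}})$ with the foliated de Rham complex using $P=0$ and the flatness of $\gamma^{0}$, show $\mathcal{A}^{\gamma^{0}}=\Gamma_{\mathcal{S}\text{-}\mathrm{pr}}(\mathbb{V})$ from definition \eqref{ALP} with $\operatorname{Ham}(E,P)=\{0\}$, and then recognize $\rho^{\gamma^{0}}(Y)$ as the class $[L_{Y}\omega]$. You simply spell out in more detail (e.g.\ the choice $\beta_{Y}=0$ in \eqref{3F} and the verification $\pi^{\ast}(L_{Y}\sigma)=L_{Y}\omega$) what the paper's proof states tersely.
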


\begin{proof}
Because of the triviality of the transverse Poisson structure of $B$, we have $\operatorname{Casim}(E,P)=C^{\infty}(E)$. Then, taking into account that $(\mathcal{C}^{\ast},\bar{\partial}^{\gamma^{0}})=(\Omega_{B}^{*,0}(E),\partial_{1,0}^{\gamma^{0}})$, we conclude that there exists a natural identification of the cochain complexes $(\Gamma(\wedge T^{\ast}\mathcal{S}),d_{\mathcal{S}})$ and $(\mathcal{C}^{\ast},\bar{\partial}^{\gamma^{0}})$. In particular, the leafwise symplectic form $\omega$ of $\Pi$ coincides with the coupling form $\sigma$. Moreover, by definition \eqref{ALP} and the relations $\operatorname{Poiss}_{V}(E,P)=\Gamma(\mathbb{V})$ and $\operatorname{Ham}(E,P)=\{0\}$, we get that
\[
\mathcal{A}^{\gamma^{0}}=\{Y\in\Gamma(\mathbb{V})\mid[\operatorname{hor}^{\gamma_{0}}(u),Y]=0\quad\forall u\in\mathfrak{X}(B)\}
\]
coincides with the space of vertical vector fields preserving the symplectic foliation, $\mathcal{A}^{\gamma^{0}}=\Gamma_{\mathcal{S}\text{-}\mathrm{pr}}(\mathbb{V})$. So, we can think of the homomorphism $\rho^{\gamma^{0}}:\mathcal{A}^{\gamma^{0}}\rightarrow H^{2}_{\bar\partial^{\gamma^{0}}}$ as a mapping $\Gamma_{\mathcal{S}\text{-}\mathrm{pr}}(\mathbb{V})\rightarrow H_{\operatorname{dR}}^{2}(\mathcal{S})$ which sends an element $Y$ to the $d_{\mathcal{S}}$-cohomology class $[L_{Y}\omega]$.
\end{proof}

In the next three sections, we discuss some other particular cases to which formula \eqref{PC} can be effectively applied.

\paragraph{Spectral Sequences} Here we briefly discuss an alternative algebraic approach to the computation of the first cohomology of the bigraded complex introduced in Section \ref{Sec:coupling} (see also \cite{Va-90,Va-94}, for the use of spectral sequences in the computation of Poisson cohomology).

Consider the nonnegative cochain complex $(\mathfrak{M}^{\ast}=\bigoplus_{n\in\mathbb{Z}}\mathfrak{M}^{n},\partial)$, where
\[
\mathfrak{M}^{n}=\bigoplus_{p+q=n}\Omega_{B}^{p,q}(E)
\]
and the differential operator $\partial$ is given by \eqref{Co}. Consider a filtration $\text{\emph{F}}$ of $\mathfrak{M}$ defined as
\[
\text{\emph{F}}^{p}\mathfrak{M}^{n}:=\bigoplus_{p\leq k\leq n}\Omega_{B}^{k,n-k}(E).
\]
Then,
\[
\mathfrak{M}^{n}=\text{\emph{F}}^{0}\mathfrak{M}^{n}\supset\text{\emph{F}}^{1}\mathfrak{M}^{n}\supset\cdots\supset\text{\emph{F}}^{n}\mathfrak{M}^{n}=\Omega_{B}^{n,0}(E)\supset\text{\emph{F}}^{n+1}\mathfrak{M}^{n}=\{0\}
\]
and hence the filtration $\text{\emph{F}}$ is \emph{bounded} \cite{McCleary}. Moreover, the bigraded decomposition \eqref{Co} provides the inclusions $\partial(\text{\emph{F}}^{p}\mathfrak{M}^{n})\subseteq\text{\emph{F}}^{p}\mathfrak{M}^{n+1}$ for all $p$ and $n$. Therefore, $(\mathfrak{M},\partial,\text{\emph{F}})$ is a graded filtered complex.

Now, consider the spectral sequence $(E_{r},d_{r})_{r\geq0}$ associated with $(\mathfrak{M},\partial,\text{\emph{F}})$. Observe that $(E_{r},d_{r})_{r\geq0}$ is a first quadrant spectral sequence.

\begin{lemma}\label{lemma:SpecSeq}
The spectral sequence $(E_{r},d_{r})_{r\geq0}$ converges to the cohomology of the cochain complex $(\mathfrak{M},\partial)$. Moreover, we have
\begin{equation}\label{eq:SplitSpec1}
  H^{1}(\mathfrak{M},\partial) \cong E_{2}^{1,0}\oplus E_{3}^{0,1}.
\end{equation}
\end{lemma}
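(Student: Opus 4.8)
The plan is to analyze the first-quadrant spectral sequence of the bounded filtration $\text{\emph{F}}$ directly, computing the relevant low-degree terms on the $E_2$ and $E_3$ pages. Convergence is immediate from the boundedness of $\text{\emph{F}}$: since $\text{\emph{F}}^{n+1}\mathfrak{M}^n=\{0\}$ and $\text{\emph{F}}^0\mathfrak{M}^n=\mathfrak{M}^n$, the filtration on each $\mathfrak{M}^n$ is finite, so by the standard convergence theorem for filtered complexes (see \cite{McCleary}) the spectral sequence stabilizes and converges to $H^*(\mathfrak{M},\partial)$ with $E_\infty^{p,q}$ the associated graded of the induced filtration on cohomology. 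The associated graded of $H^1(\mathfrak{M},\partial)$ then consists of the terms $E_\infty^{p,q}$ with $p+q=1$, namely $E_\infty^{1,0}$ and $E_\infty^{0,1}$, and since we are working over a field (the $C^\infty(B)$-module structure notwithstanding, these are real vector spaces) the extension splits, giving $H^1(\mathfrak{M},\partial)\cong E_\infty^{1,0}\oplus E_\infty^{0,1}$.

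The substance of the argument is identifying when these $E_\infty$ terms stabilize. First I would identify the $E_0$ and $E_1$ pages: the $d_0$ differential is the bidegree-$(0,1)$ part $\partial_{0,1}^P=\delta_P$, so $E_1^{p,q}=H^q(\Omega_B^{p,\bullet}(E),\delta_P)$ is the vertical Poisson cohomology in each horizontal degree. The $d_1$ differential is induced by $\partial_{1,0}^\gamma$. Because the spectral sequence lies in the first quadrant, the differentials entering and leaving a given spot eventually vanish for degree reasons. For the term $E^{1,0}$, the incoming differential $d_1:E_1^{0,0}\to E_1^{1,0}$ and outgoing $d_1:E_1^{1,0}\to E_1^{2,0}$ act, and all higher differentials $d_r$ ($r\ge 2$) out of position $(1,0)$ land in $(1+r,1-r)$ with negative second index, hence vanish; the incoming $d_r$ for $r\ge 2$ comes from $(1-r,r-1)$, which has negative first index, hence also vanishes. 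Therefore $E_\infty^{1,0}=E_2^{1,0}$. For the term $E^{0,1}$, the outgoing $d_2:E_2^{0,1}\to E_2^{2,0}$ is generally nonzero, but $d_r$ for $r\ge 3$ out of $(0,1)$ lands in $(r,2-r)$ with negative second index for $r\ge 3$, and incoming differentials into $(0,1)$ vanish for all $r\ge 1$ by the first-quadrant condition; hence $E_\infty^{0,1}=E_3^{0,1}$. Substituting these identifications into $H^1\cong E_\infty^{1,0}\oplus E_\infty^{0,1}$ yields \eqref{eq:SplitSpec1}.

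The main obstacle I anticipate is not the degreewise vanishing bookkeeping, which is routine once the first-quadrant property is established, but rather verifying the splitting of the extension cleanly in the filtered-module setting and matching conventions so that the stabilization occurs precisely at $E_2$ and $E_3$ respectively and not earlier or later. In particular one must confirm that position $(0,1)$ genuinely survives to $E_3$ (i.e.\ that no earlier differential kills it) and that $E_3^{0,1}=E_\infty^{0,1}$ exactly; this hinges on checking that the only potentially nontrivial differential affecting $(0,1)$ beyond $E_2$ is the $d_2$ already accounted for, which follows from the shape of the quadrant. I would close by remarking that these abstract identifications are consistent with the concrete splitting \eqref{PC}: one expects $E_2^{1,0}\cong H^1_{\bar\partial^\gamma}$ and $E_3^{0,1}\cong \ker\rho^\gamma/\operatorname{Ham}(E,P)$, which provides an independent check on the computation and ties the spectral-sequence derivation back to the geometric one.
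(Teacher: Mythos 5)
Your proposal is correct and follows essentially the same route as the paper: the paper's proof likewise invokes the first-quadrant property to get $E_{\infty}^{p,q}=E_{N}^{p,q}$ with $N=\max\{p+1,q+2\}$ (giving $N=2$ at $(1,0)$ and $N=3$ at $(0,1)$) and cites the boundedness of the filtration for convergence; your explicit differential bookkeeping and the remark that the extension splits over $\mathbb{R}$ are just the unpacked version of that argument.
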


\begin{proof}
  Since $(E_{r},d_{r})_{r\geq0}$ is a first quadrant spectral sequence, for every $p,q\in\mathbb{Z}$ we have $E^{p,q}_{\infty}=E^{p,q}_{N}$, where $N=\max\{p+1,q+2\}$. Then, the convergence and relation \eqref{eq:SplitSpec1} follow from the fact that $\text{\emph{F}}$ is bounded (see \cite{McCleary}).
\end{proof}

To compute $E_{2}^{1,0}$ and $E_{3}^{0,1}$ in the notations introduced in Section \ref{Sec:coupling},  we use the explicit general formulas for the $E$-terms of a spectral sequence (see, for example, \cite{DZ}):
\begin{equation*}
  E_{r}^{p,q} = \frac{\ker(\partial|_{\mathfrak{M}^{p+q}})\cap\text{\emph{F}}^{p}\mathfrak{M}^{p+q}+\text{\emph{F}}^{p+1}\mathfrak{M}^{p+q}}{\operatorname{Im}(\partial|_{\mathfrak{M}^{p+q-1}})\cap\text{\emph{F}}^{p}\mathfrak{M}^{p+q}+\text{\emph{F}}^{p+1}\mathfrak{M}^{p+q}}, \qquad r\geq\max\{p+1,q+2\}.
\end{equation*}
The direct computations give
\begin{align*}
E_{2}^{1,0} &= \frac{\ker(\partial|_{\mathfrak{M}^{1,0}})}{\operatorname{Im}(\partial^{\gamma}_{1,0}|_{\ker\partial^{P}_{0,1}\cap\mathfrak{M}^{0}})} = \frac{Z^{1}_{\bar{\partial}^{\gamma}}}{B^{1}_{\bar{\partial}^{\gamma}}} = H^{1}_{\bar{\partial}^{\gamma}},
\\
E_{3}^{0,1} &= \frac{\ker(\partial|_{\mathfrak{M}^{1}})+\mathfrak{M}^{1,0}}{\operatorname{Im}(\partial|_{\mathfrak{M}^{0}})+\mathfrak{M}^{1,0}} =\frac{\operatorname{pr}_{V}(\ker(\partial|_{\mathfrak{M}^{1}}))\oplus\mathfrak{M}^{1,0}}{\operatorname{pr}_{V}(\operatorname{Im}(\partial|_{\mathfrak{M}^{0}}))\oplus\mathfrak{M}^{1,0}} \cong\frac{\ker\rho^{\gamma}}{\operatorname{Ham}(E,P)},
\end{align*}
where we use the relations: $\operatorname{pr}_{V}(\ker(\partial|_{\mathfrak{M}^{1}}))=\ker\rho^{\gamma}$ and $\operatorname{pr}_{V}(\operatorname{Im}(\partial|_{\mathfrak{M}^{0}}))=\operatorname{Ham}(E,P)$ (see Lemma \ref{lemma:KerRho}). This shows that formula \eqref{eq:SplitSpec1} coincides with \eqref{PC} under the cochain complex isomorphism $\flat_{\sigma}$ (see the proof of Proposition \ref{prop:IsoCompx}).

\section{Vanishing of the First Poisson Cohomology}\label{Sec:Vanish}

Here, by using the results of the previous section, we present some sufficient conditions for the vanishing of the first Poisson cohomology.

Let $\pi:E\rightarrow B$ be a fiber bundle. Suppose that we start again with a coupling Poisson tensor $\Pi$ on $E$ with associated geometric data $(\gamma,\sigma,P)$. We make the following assumptions. Assume that the \emph{first vertical cohomology group} of $P$ is trivial, that is,
\begin{equation}\label{PC1}
\operatorname{Poiss}_{V}(E,P)=\operatorname{Ham}(E,P).
\end{equation}

It follows from \eqref{CC2} and \eqref{PC1} that the horizontal lifts of every $u\in\mathfrak{X}(B)$ with respect to two Poisson connections on $(E,P)$ differ by a Hamiltonian vector field. Then, by \eqref{Def} and definition \eqref{DEF}, we conclude that the coboundary operator $\bar{\partial}^{\gamma}$ is independent of the choice of the Poisson connection $\gamma$ on $(E\xrightarrow{\pi}B,P)$ and will be simply denoted by $\bar{\partial}$. Therefore, under condition \eqref{PC1}, one can associate to the Poisson bundle the intrinsic cochain complex $(\mathcal{C}^{*}=\oplus_{p}\mathcal{C}^{p},\bar{\partial})$. Taking into account \eqref{PC1} and \eqref{IN}, we conclude that $\ker\rho^{\gamma}=\operatorname{Ham}(E,P)$ and hence,
\[
\operatorname{Poiss}(E,\Pi)\cong Z_{\bar{\partial}}^{1}\oplus\operatorname{Ham}(E,P).
\]
So, in this case, formula \eqref{PC} for the first Poisson cohomology of $\Pi$ reads
\begin{equation}\label{PC5}
H_{\Pi}^{1}(E)\cong H_{\bar{\partial}}^{1}.
\end{equation}

Next, let us assume that $(E,P)$ is a \emph{flat Poisson bundle}, that is,
\begin{equation}\label{PC4}
\text{there exists a \emph{flat Poisson connection} } \gamma^{0}\text{ on }(E,P).
\end{equation}
Equivalently, condition \eqref{PC4} can be reformulated as follows: there exists a regular foliation $\mathcal{F}$ on $E$ such that
\begin{equation}\label{PC6}
TE=T\mathcal{F\oplus}\mathbb{V}\quad(\mathbb{V}:=\ker d\pi)
\end{equation}
and every $\pi$-projectable section of $T\mathcal{F}$ is a Poisson vector field on $(E,P)$,
\begin{equation}\label{PC7}
L_{Z}P=0\qquad\forall Z\in\Gamma_{\pi\text{-}\mathrm{pr}}(T\mathcal{F}).
\end{equation}
Then, the horizontal subbundle $\mathbb{H}^{\gamma^{0}}$ of the flat Poisson connection $\gamma^{0}$ is just the tangent bundle $T\mathcal{F}$ of the foliation. Recall that $\Gamma_{\pi\text{-}\mathrm{pr}}(T\mathcal{F}) = \{\operatorname{hor}^{\gamma^{0}}(u)\mid u\in\mathfrak{X}(B)\}$ denotes the space of all $\pi$-projectable, $\gamma^{0}$-horizontal vector fields on $E$.

Let $\Omega^{p}(\mathcal{F}):=\Gamma(\wedge^{p}(T\mathcal{F})^{\ast})$ be the space of foliated $p$-forms on $E$. In particular, $\Omega^{0}(\mathcal{F})=C^{\infty}(E)$. Consider the \emph{foliated de Rham complex} $(\Omega^{\ast}(\mathcal{F})=\bigoplus_{p\in\mathbb{Z}}\Omega^{p}(\mathcal{F}),d_{\mathcal{F}})$, where $d_{\mathcal{F}}:\Omega^{p}(\mathcal{F})\rightarrow\Omega^{p+1}(\mathcal{F})$ is the \emph{foliated exterior differential} given by the standard formula
\begin{align*}
(d_{\mathcal{F}}\mu)(X_{0},\ldots,X_{p}):=&\sum_{i=0}^{p}(-1)^{i}L_{X_{i}}(\mu(X_{0},\ldots,\hat{X}_{i},\dots,X_{p}))\\
&+\sum_{i<j} (-1)^{i+j}\mu([X_{i},X_{j}],X_{0},\ldots,\hat{X}_{i},\ldots,\hat{X}_{j},\ldots,X_{p}).\nonumber
\end{align*}
The cohomology of $(\Omega^{*}(\mathcal{F}),d_{\mathcal{F}})$ is called the \emph{foliated de Rham cohomology} and denoted by $H_{\operatorname{dR}}^{*}(\mathcal{F})$. Observe that $(\Omega^{*}(\mathcal{F}),d_{\mathcal{F}})$ is isomorphic to the cochain complex $(\Gamma(\wedge^{*}\mathbb{V}^{0}),d^{\gamma^{0}}_{1,0})$, where $d^{\gamma^{0}}_{1,0}$ is the component of bidegree $(1,0)$ of the exterior differential relative to the flat connection $\gamma^{0}$ \cite{Va-04}. More precisely,
\[
d^{\gamma^{0}}_{1,0}\beta(Y_{0},\ldots,Y_{p}):=d\beta(p_{T\mathcal{F}}Y_{0},\ldots,p_{T\mathcal{F}}Y_{p})
\]
for any $\beta\in\Gamma(\wedge^{p}\mathbb{V}^{0})$ and vector fields $Y_{0},\ldots,Y_{p}$ on $E$. Here, $p_{T\mathcal{F}}:TE\rightarrow T\mathcal{F}$ is the projection along $\mathbb{V}$. It follows from \eqref{Cur2} and \eqref{PROP} that $(\Omega^{*,0}_{B}(E),\partial_{1,0}^{\gamma^{0}})$ is a cochain complex is isomorphic to $(\Gamma(\wedge^{*}\mathbb{V}^{0}),d^{\gamma^{0}}_{1,0})$.

Now let us associate to the flat Poisson bundle $(E\overset{\pi}{\rightarrow}B,P,\mathcal{F})$ the following subalgebra of Hamiltonian vector fields:
\[
\operatorname{Ham}_{\mathcal{F}}(E,P):=\{Y\in\operatorname{Ham}(E,P)\mid[Y,\Gamma_{\pi\text{-}\mathrm{pr}}(T\mathcal{F})]=0\}.
\]
Observe that a Hamiltonian vector field $Y$ on $(E,P)$ belongs to $\operatorname{Ham}_{\mathcal{F}}(E,P)$ if and only if the flow of
$Y$ preserves the foliation $\mathcal{F}$. \ Let $C_{\mathcal{F}}^{\infty}(E):=\{H\in C^{\infty}(E)\mid d_{\mathcal{F}}H=0\}$ be the space of smooth functions on $E$ which are constant along the leaves of $\mathcal{F}$.

\begin{theorem}\label{teo:Vanish1}
Suppose that in addition to hypotheses \eqref{PC1}, \eqref{PC4}, the following conditions hold:
\begin{enumerate}
\item[(i)] the first foliated de Rham cohomology group of $(E,\mathcal{F})$ is trivial,
\begin{equation}\label{A2}
H_{\operatorname{dR}}^{1}(\mathcal{F})=\{0\};
\end{equation}
\item[(ii)] the subalgebra of Hamiltonian vector fields on $(E,P)$ preserving the foliation $\mathcal{F}$ is generated by the subspace $C_{\mathcal{F}}^{\infty}(E)$,
\begin{equation}\label{A4}
\operatorname{Ham}_{\mathcal{F}}(E,P) = \{P^{\sharp}dH\mid H\in C_{\mathcal{F}}^{\infty}(E)\}.
\end{equation}
\end{enumerate}
Then, the first Poisson cohomology of the coupling Poisson tensor \ $\Pi$ vanishes,
\begin{equation}\label{TC}
H_{\Pi}^{1}(E)=\{0\}.
\end{equation}
\end{theorem}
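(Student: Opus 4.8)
The plan is to reduce the vanishing of $H_{\Pi}^{1}(E)$ to the vanishing of the tangential cohomology $H_{\bar{\partial}}^{1}$ and then to prove the latter by a ``primitive correction'' argument exploiting the flat Poisson connection. First, under hypothesis \eqref{PC1} the transversal contribution in \eqref{PC} collapses, since $\ker\rho^{\gamma}=\operatorname{Ham}(E,P)$, and formula \eqref{PC5} already gives the isomorphism $H_{\Pi}^{1}(E)\cong H_{\bar{\partial}}^{1}$. Hence it suffices to show $H_{\bar{\partial}}^{1}=\{0\}$. By hypothesis \eqref{PC4} I would fix the flat Poisson connection $\gamma^{0}$, so that $\bar{\partial}=\partial_{1,0}^{\gamma^{0}}|_{\mathcal{C}^{*}}$ and $(\mathcal{C}^{*},\bar{\partial})$ becomes a subcomplex of $(\Omega_{B}^{*,0}(E),\partial_{1,0}^{\gamma^{0}})$, which is in turn identified with the foliated de Rham complex $(\Omega^{*}(\mathcal{F}),d_{\mathcal{F}})$.

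Next, I would take a class in $H_{\bar{\partial}}^{1}$ represented by $\alpha\in\mathcal{C}^{1}$ with $\bar{\partial}\alpha=0$, so that, viewed as a foliated $1$-form, $\alpha$ is $d_{\mathcal{F}}$-closed. By hypothesis (i), $H_{\operatorname{dR}}^{1}(\mathcal{F})=\{0\}$ furnishes a function $H\in C^{\infty}(E)$ with $\partial_{1,0}^{\gamma^{0}}H=\alpha$, that is, $L_{\operatorname{hor}^{\gamma^{0}}(u)}H=\alpha(u)$ for all $u\in\mathfrak{X}(B)$. The obstruction is that $H$ need not be a Casimir function of $P$, so a priori $\alpha$ is exact only in the ambient foliated complex and not in the subcomplex $\mathcal{C}^{*}$. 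The heart of the argument is to correct $H$ into a Casimir primitive, and this is precisely where condition (ii) enters.

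To carry out the correction, I would examine the Hamiltonian vector field $Y:=P^{\sharp}dH\in\operatorname{Ham}(E,P)$. Since $\gamma^{0}$ is a Poisson connection, each horizontal lift $\operatorname{hor}^{\gamma^{0}}(u)$ is a Poisson vector field of $P$ by \eqref{PC7}, so the identity $[X,P^{\sharp}dF]=P^{\sharp}dL_{X}F$ yields
\[
[\operatorname{hor}^{\gamma^{0}}(u),Y]=P^{\sharp}d\bigl(L_{\operatorname{hor}^{\gamma^{0}}(u)}H\bigr)=P^{\sharp}d\bigl(\alpha(u)\bigr)=0,
\]
because $\alpha(u)\in\operatorname{Casim}(E,P)$. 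As the projectable sections of $T\mathcal{F}$ are exactly the horizontal lifts, this shows $Y\in\operatorname{Ham}_{\mathcal{F}}(E,P)$. Hypothesis (ii) then produces $G\in C_{\mathcal{F}}^{\infty}(E)$ with $P^{\sharp}dH=Y=P^{\sharp}dG$, whence $c:=H-G$ satisfies $P^{\sharp}dc=0$, i.e.\ $c\in\operatorname{Casim}(E,P)=\mathcal{C}^{0}$.

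Finally, since $G$ is leafwise constant, $\partial_{1,0}^{\gamma^{0}}G=d_{\mathcal{F}}G=0$, so $\bar{\partial}c=\partial_{1,0}^{\gamma^{0}}(H-G)=\alpha$; thus $\alpha$ is $\bar{\partial}$-exact and $H_{\bar{\partial}}^{1}=\{0\}$, which gives \eqref{TC} through \eqref{PC5}. Conceptually the proof establishes that the natural map $H_{\bar{\partial}}^{1}\to H_{\operatorname{dR}}^{1}(\mathcal{F})$ induced by the inclusion of complexes is injective, so that the vanishing (i) of the target forces the vanishing of the source. I expect the only delicate point to be the verification that $Y\in\operatorname{Ham}_{\mathcal{F}}(E,P)$, namely that the Casimir-valuedness of $\alpha$ matches exactly the foliation-preserving condition built into $\operatorname{Ham}_{\mathcal{F}}$, since this matching is what makes hypothesis (ii) applicable to the primitive.
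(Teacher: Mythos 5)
Your proposal is correct and follows essentially the same route as the paper's proof: reduce to $H^{1}_{\bar{\partial}}$ via \eqref{PC5}, view $(\mathcal{C}^{\ast},\bar{\partial})$ as a subcomplex of the foliated de Rham complex of $\mathcal{F}$, and use hypothesis (ii) to show that the map $H^{1}_{\bar{\partial}}\to H^{1}_{\operatorname{dR}}(\mathcal{F})$ is injective, so that (i) forces the vanishing. The only difference is presentational: the paper packages the use of (ii) as an abstract equivalence $d_{\mathcal{F}}(\bar{\Omega}^{0}(\mathcal{F}))=d_{\mathcal{F}}(\Omega^{0}(\mathcal{F}))\cap\bar{\Omega}^{1}(\mathcal{F})$ expressing that injectivity, while you unwind the same correction of the primitive $H$ by a leafwise-constant $G$ explicitly.
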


\begin{proof}
First, we observe that condition \eqref{A4} can be reformulated as follows: For every function $f\in C_{E}^{\infty}$ with property:
\begin{equation}\label{A3}
L_{Z}f\in\operatorname{Casim}(E,P)\quad\forall Z\in\Gamma_{\pi\text{-}\mathrm{pr}}(T\mathcal{F}),
\end{equation}
there exists $g\in\operatorname{Casim}(E,P)$ such that
\begin{equation}\label{A1}
d_{\mathcal{F}}f=d_{\mathcal{F}}g.
\end{equation}
Indeed, it follows from \eqref{A3}, \eqref{A1} that $[P^{\sharp}df,Z]=-P^{\sharp}dL_{Z}f=0$ and $P^{\sharp}df=P^{\sharp}dH$, where $H=f-g\in$
$C_{\mathcal{F}}^{\infty}(E)$. Now, let us define
\[
\bar{\Omega}^{q}(\mathcal{F}):=\left\{\beta\in\Omega^{q}(\mathcal{F}) \mid  \mathbf{i}_{X_{1}}\ldots\mathbf{i}_{X_{q}}\beta\in\operatorname{Casim}(E,P) \quad\forall X_{i}\in\Gamma_{\pi\text{-}\mathrm{pr}}(T\mathcal{F})\right\}.
\]
In particular, $\bar{\Omega}^{0}(\mathcal{F})=\operatorname{Casim}(E,P)$. Using the property that $\Gamma_{\pi\text{-}\mathrm{pr}}(T\mathcal{F})\subset\operatorname{Poiss}(E,P)$, it easy to see that the foliated differential $d_{\mathcal{F}}$ leaves invariant the subspaces $\bar{\Omega}^{q}(\mathcal{F})$ of $\Omega^{q}(\mathcal{F})$ and hence, the coboundary operator $\bar{d}_{\mathcal{F}}:=d_{\mathcal{F}}|_{\bar{\Omega}^{q}(\mathcal{F})}$ is well defined. Then,  $(\bar{\Omega}^{\ast}(\mathcal{F}):=\oplus_{q}\bar{\Omega}^{q}(\mathcal{F}),\bar{d}_{\mathcal{F}})$ is a subcomplex of the cochain complex $(\Omega^{*}(\mathcal{F}),d_{\mathcal{F}})$ and there is a natural homomorphism $H_{\bar{d}_{\mathcal{F}}}^{q}\rightarrow H_{\operatorname{dR}}^{q}(\mathcal{F})$ between the corresponding cohomology groups. One can show that conditions \eqref{A3}, \eqref{A1} are equivalent to the following:
\[
d_{\mathcal{F}}(\bar{\Omega}^{0}(\mathcal{F})) = d_{\mathcal{F}}(\Omega^{0}(\mathcal{F}))\cap\bar{\Omega}^{1}(\mathcal{F}).
\]
This condition means that the natural homomorphism $H_{\bar{d}_{\mathcal{F}}}^{1}\rightarrow H_{\operatorname{dR}}^{1}(\mathcal{F})$ is injective. Therefore, the hypotheses (i), (ii) of the theorem imply that $H_{\bar{d}_{\mathcal{F}}}^{1}=\{0\}$. Finally, we observe that the cochain complexes associated with $\bar{d}_{\mathcal{F}}$ and $\bar{\partial}:=\partial_{1,0}^{\gamma^{0}}|_{\mathcal{C}^{\ast}}$ are isomorphic and hence, $H_{\bar{\partial}}^{1}$ is trivial. This, together with \eqref{PC5}, proves \eqref{TC}.
\end{proof}

To get more insight for the criterion in Theorem \ref{teo:Vanish1}, let us consider the situation when conditions \eqref{PC1}, \eqref{PC4} are fulfilled and the foliation $\mathcal{F}$ is a fibration. In other words, we assume that the leaf space $K:=E\diagup\mathcal{F}$ of the foliation is a smooth manifold and the natural projection $\nu:E\rightarrow K$ is a surjective submersion. So, we have $T\mathcal{F}=\ker d\nu$.

\begin{lemma}\label{lemma:ExistsPoiss}
There exists a Poisson structure $\Upsilon$ on $K$ such that the projection $\nu:E\rightarrow K$ is a Poisson map. Moreover, condition \eqref{A4} is equivalent to the following property: the Hamiltonian vector field $P^{\sharp}df$ of every function $f\in C^{\infty}(E)$ such that
\begin{equation}\label{HH1}
[Z,P^{\sharp}df]=0\quad\forall Z\in\Gamma_{\pi\text{-}\mathrm{pr}}(T\mathcal{F}),
\end{equation}
is $\nu$-related with a Hamiltonian vector field on $(K,\Upsilon)$,
\begin{equation}\label{HH2}
d\nu\circ P^{\sharp}df=\Upsilon^{\sharp}dh\circ\nu,
\end{equation}
for a certain $h\in C^{\infty}(K)$.
\end{lemma}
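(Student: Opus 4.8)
The plan is to establish the two assertions of Lemma~\ref{lemma:ExistsPoiss} separately, since they are logically independent. For the first assertion, the existence of a Poisson structure $\Upsilon$ on $K$ making $\nu$ a Poisson map, I would verify that the bivector $P$ is projectable along the fibration $\nu:E\rightarrow K$. Concretely, I would show that for any two functions $h_{1},h_{2}\in C^{\infty}(K)$, the bracket $\{\nu^{*}h_{1},\nu^{*}h_{2}\}_{P}=P(d\nu^{*}h_{1},d\nu^{*}h_{2})$ is again of the form $\nu^{*}(\text{something})$, i.e. constant along the leaves of $\mathcal{F}=\ker d\nu$. The key input is hypothesis \eqref{PC7}: every $\pi$-projectable section $Z$ of $T\mathcal{F}$ is a Poisson vector field of $P$. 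Since $d_{\mathcal{F}}(\nu^{*}h)=0$ (pullbacks under $\nu$ are leafwise constant), one computes $L_{Z}\{\nu^{*}h_{1},\nu^{*}h_{2}\}_{P}=\{L_{Z}\nu^{*}h_{1},\nu^{*}h_{2}\}_{P}+\{\nu^{*}h_{1},L_{Z}\nu^{*}h_{2}\}_{P}=0$ for all such $Z$, using $L_{Z}P=0$ and $L_{Z}(\nu^{*}h_{i})=0$. This shows the bracket descends to $K$, and the Jacobi identity for $\Upsilon$ is inherited from that of $P$ via the surjective submersion.

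For the second assertion I would prove the equivalence of \eqref{A4} with the $\nu$-relatedness property \eqref{HH2}. First I would observe that \eqref{HH1} is exactly the condition that $P^{\sharp}df\in\operatorname{Ham}_{\mathcal{F}}(E,P)$, so the reformulation of \eqref{A4} given at the start of the proof of Theorem~\ref{teo:Vanish1} applies: \eqref{A4} holds if and only if, whenever $L_{Z}f\in\operatorname{Casim}(E,P)$ for all $Z\in\Gamma_{\pi\text{-}\mathrm{pr}}(T\mathcal{F})$, there is $g\in\operatorname{Casim}(E,P)$ with $d_{\mathcal{F}}f=d_{\mathcal{F}}g$. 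The plan is then to translate the relation $d_{\mathcal{F}}f=d_{\mathcal{F}}g$ through the fibration. Since $\mathcal{F}=\ker d\nu$ and the fibers are the leaves, a function lies in $C_{\mathcal{F}}^{\infty}(E)$ precisely when it is (locally) of the form $\nu^{*}h$; thus $H=f-g\in C_{\mathcal{F}}^{\infty}(E)$ means $P^{\sharp}df=P^{\sharp}dH=P^{\sharp}d(\nu^{*}h)$ for some $h\in C^{\infty}(K)$ (noting $P^{\sharp}dg=0$ as $g$ is Casimir). Finally, because $\nu$ is a Poisson map, $d\nu\circ P^{\sharp}\circ\nu^{*}=\Upsilon^{\sharp}$, which yields exactly \eqref{HH2}. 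Conversely, if every such $P^{\sharp}df$ is $\nu$-related to a Hamiltonian field $\Upsilon^{\sharp}dh$, I would reconstruct the splitting $f=\nu^{*}h+g$ with $g$ Casimir and deduce \eqref{A4}.

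The main obstacle I anticipate is the careful handling of the global versus local passage from leafwise-constant functions to genuine pullbacks $\nu^{*}h$. The identification $C_{\mathcal{F}}^{\infty}(E)\cong\nu^{*}C^{\infty}(K)$ requires that the leaves of $\mathcal{F}$ be connected, which is automatic here since they are the fibers of the submersion $\nu$; but one must still ensure that a leafwise-constant function descends to a genuinely smooth function on $K$, which follows from $\nu$ being a surjective submersion with the fiber structure. A related delicate point is verifying that the Casimir function $g$ produced in the reformulation of \eqref{A4} can be absorbed correctly, i.e. that $P^{\sharp}dg=0$ lets us replace $f$ by $H=f-g$ without changing the Hamiltonian vector field; this is immediate but must be stated to close the loop. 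The remaining computations---the Leibniz expansion of $L_{Z}\{\cdot,\cdot\}_{P}$ and the intertwining identity $d\nu\circ P^{\sharp}\circ\nu^{*}=\Upsilon^{\sharp}$ coming from the Poisson-map property---are routine and I would not grind through them in detail.
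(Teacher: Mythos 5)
Your proposal is correct and follows essentially the same route as the paper: you construct $\Upsilon$ by the very same $L_{Z}$-invariance computation based on \eqref{PC7}, and you establish the equivalence of \eqref{A4} with \eqref{HH2} through the same reformulation via \eqref{A3}--\eqref{A1} combined with the identification $C_{\mathcal{F}}^{\infty}(E)=\nu^{\ast}C^{\infty}(K)$ and the Poisson-map property of $\nu$. The only cosmetic difference is that the paper phrases the final step as ``the projected infinitesimal automorphism $w$ of $\Upsilon$ is Hamiltonian iff \eqref{A1} holds,'' whereas you substitute $H=f-g=\nu^{\ast}h$ directly and recover $g$ in the converse by the transversality $TE=\ker d\nu\oplus\ker d\pi$; the underlying computations coincide.
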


\begin{proof}
Notice that $C_{\mathcal{F}}^{\infty}(E)=\nu^{\ast}C^{\infty}(K)$. This and condition \eqref{PC7} imply that
\[
L_{Z}(P(d\nu^{\ast}\kappa_{1},d\nu^{\ast}\kappa_{2}))= P(dL_{Z}(\nu^{\ast}\kappa_{1}),d\nu^{\ast}\kappa_{2})+P(d\nu^{\ast}\kappa_{1},dL_{Z}(\nu^{\ast}\kappa_{2}))=0
\]
for any $Z\in\Gamma_{\pi\text{-}\mathrm{pr}}(T\mathcal{F})$ and $\kappa_{1}, \kappa_{2}\in C^{\infty}(K)$. It follows that there exists a bivector field $\Upsilon\in\mathfrak{X}^{2}(K)$ which is uniquely determined by
\[
\nu^{\ast}\Upsilon(d\kappa_{1},d\kappa_{2})=P(d\nu^{\ast}\kappa_{1},d\nu^{\ast}\kappa_{2})
\]
and satisfies the Jacobi identity. Now, condition \eqref{HH1} for $f\in C^{\infty}(E)$ says that $P^{\sharp}df\in\operatorname{Ham}_{\mathcal{F}}(E,P)$ and hence \eqref{A3} holds. It remains to show the equivalence of conditions \eqref{A1} and \eqref{HH2}. Indeed, by \eqref{HH1} the Hamiltonian vector field $P^{\sharp}df$ is $\nu$-related with a vector field $w\in\mathfrak{X}(K)$ which is an infinitesimal automorphism of $\Upsilon$. Then, it is easy to see that $w$ is Hamiltonian, $w=\Upsilon^{\sharp}dh,$ $h\in C^{\infty}(K)$ if and only if $f$ satisfies \eqref{A1}, where $g=f-\nu^{\ast}h\in\operatorname{Casim}(E,P)$.
\end{proof}

Observe that the hypotheses (i) and (ii) of Theorem \ref{teo:Vanish1} are independent in general. Here is a particular case in which condition \eqref{HH2} is satisfied but \eqref{PC1} or \eqref{A2} do not necessarily hold.

\begin{example}
Let $B$ be a manifold and consider a Poisson manifold $K$ equipped with a Poisson tensor $\Upsilon$. Let $(E=B\times K, P)$ be the product of  Poisson manifolds, where $B$ has the trivial Poisson structure. Let us think of $(E,P)$ as the total space of a trivial Poisson bundle over $B$ with projection $\pi=\operatorname{pr}_{1}$ and the vertical Poisson structure $P$. It is clear that $P$ and $\Upsilon$ are $\operatorname{pr}_{2}$-related and $\ker(\operatorname{pr}_{2})\subset TE$ induces a flat Poisson connection for $P$. Fixing $x_{0}\in B$, consider the section $s:K\rightarrow E$ of $\nu$ given by $s(y)=(x_{0},y)$. Pick a function $f\in C^{\infty}(E)$ satisfying \eqref{HH1} and put $h=s^{\ast}f$. Then, one can easily verify that \eqref{HH2} holds.
\end{example}

By the same arguments as in the proof of Lemma \ref{lemma:ExistsPoiss}, we derive the following cohomological criterion.

\begin{lemma}\label{lemma:Hups0}
Under hypotheses \eqref{PC1}, \eqref{PC4}, in the case when the foliation $\mathcal{F}$ is a fibration, condition \eqref{A4} is equivalent to the triviality of the first Poisson cohomology group of $\Upsilon$.
\end{lemma}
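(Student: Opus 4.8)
The plan is to set up a linear correspondence between the Hamiltonian vector fields of $P$ preserving $\mathcal{F}$ and the Poisson vector fields of $\Upsilon$, and to read off both conditions as assertions about this correspondence. Concretely, to every $f\in C^{\infty}(E)$ satisfying \eqref{HH1} the proof of Lemma \ref{lemma:ExistsPoiss} already attaches a Poisson vector field $w\in\operatorname{Poiss}(K,\Upsilon)$ which is $\nu$-related to $P^{\sharp}df$; this defines a map $\Phi\colon\operatorname{Ham}_{\mathcal{F}}(E,P)\to\operatorname{Poiss}(K,\Upsilon)$, $\Phi(P^{\sharp}df)=w$, and the elements of $\operatorname{Ham}_{\mathcal{F}}(E,P)$ are precisely the $P^{\sharp}df$ with $f$ obeying \eqref{HH1}. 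First I would record that $\Phi$ is injective: since $P^{\sharp}df$ is vertical while $\ker d\nu=T\mathcal{F}$, relation \eqref{PC6} gives $T\mathcal{F}\cap\mathbb{V}=\{0\}$, so $w=0$ already forces $P^{\sharp}df=0$. I would also note that $\Phi$ carries the subspace $\{P^{\sharp}dH\mid H\in C_{\mathcal{F}}^{\infty}(E)\}=\{P^{\sharp}d\nu^{\ast}h\}$ onto $\operatorname{Ham}(K,\Upsilon)$, because $\nu$ is a Poisson map and hence $P^{\sharp}d\nu^{\ast}h$ is $\nu$-related to $\Upsilon^{\sharp}dh$.

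Granting these facts, the two implications become transparent. For the direction $H^{1}_{\Upsilon}(K)=\{0\}\Rightarrow\eqref{A4}$, I would take any $P^{\sharp}df\in\operatorname{Ham}_{\mathcal{F}}(E,P)$, project it to $w=\Phi(P^{\sharp}df)\in\operatorname{Poiss}(K,\Upsilon)=\operatorname{Ham}(K,\Upsilon)$, write $w=\Upsilon^{\sharp}dh$, and then invoke the equivalence of \eqref{A1} and \eqref{HH2} established in Lemma \ref{lemma:ExistsPoiss} (with $g=f-\nu^{\ast}h\in\operatorname{Casim}(E,P)$) to conclude $P^{\sharp}df=P^{\sharp}dH$ with $H=f-g=\nu^{\ast}h\in C_{\mathcal{F}}^{\infty}(E)$; this is exactly \eqref{A4}. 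This direction uses only the reformulation of \eqref{A4} from Lemma \ref{lemma:ExistsPoiss} and needs no surjectivity.

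The opposite implication \eqref{A4}$\Rightarrow H^{1}_{\Upsilon}(K)=\{0\}$ is where the work lies, and it amounts to proving that $\Phi$ is surjective; this is the step I expect to be the main obstacle. Given $w\in\operatorname{Poiss}(K,\Upsilon)$, the plan is to produce its canonical vertical lift: since $TE=T\mathcal{F}\oplus\mathbb{V}$ with $\ker d\nu=T\mathcal{F}$, the restriction $d\nu|_{\mathbb{V}}\colon\mathbb{V}\to TK$ is a fiberwise isomorphism, so there is a unique $\tilde{w}\in\Gamma(\mathbb{V})$ that is $\nu$-related to $w$. I would then verify that $\tilde{w}$ is a vertical Poisson vector field: because $P$ is $\nu$-related to $\Upsilon$ and $\tilde w$ to $w$, the bracket $[\tilde{w},P]$ is $\nu$-related to $[w,\Upsilon]=L_{w}\Upsilon=0$; as $[\tilde w,P]$ is vertical and $\wedge^{2}(d\nu|_{\mathbb{V}})$ is injective on vertical bivectors, this yields $L_{\tilde w}P=0$. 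Likewise $\tilde w$ commutes with every projectable horizontal field: for $Z=\operatorname{hor}^{\gamma^{0}}(u)\in\Gamma(T\mathcal{F})$ one has $d\nu(Z)=0$, so $[Z,\tilde w]$ is $\nu$-related to $0$ and hence lies in $T\mathcal{F}$, while $\pi$-projectability of $Z$ keeps $[Z,\tilde w]$ in $\mathbb{V}$; by $T\mathcal{F}\cap\mathbb{V}=\{0\}$ it vanishes.

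Finally, hypothesis \eqref{PC1} lets me write $\tilde w=P^{\sharp}df\in\operatorname{Ham}(E,P)$, and the commutation just proved shows $f$ satisfies \eqref{HH1}, so $\Phi(P^{\sharp}df)=w$ and $\Phi$ is onto $\operatorname{Poiss}(K,\Upsilon)$. With this surjectivity in hand, \eqref{A4} (read through the Lemma \ref{lemma:ExistsPoiss} reformulation) says every $w=\Phi(P^{\sharp}df)$ is $\Upsilon$-Hamiltonian; since every Poisson vector field on $K$ arises as such a $w$, we obtain $\operatorname{Poiss}(K,\Upsilon)=\operatorname{Ham}(K,\Upsilon)$, i.e.\ $H^{1}_{\Upsilon}(K)=\{0\}$. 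The delicate points to get right will be the bookkeeping of $\nu$-relatedness for bivectors and the two applications of $T\mathcal{F}\cap\mathbb{V}=\{0\}$; the genuinely substantial input is that \eqref{PC1} upgrades the lift $\tilde w$ from merely Poisson to Hamiltonian.
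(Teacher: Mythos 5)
Your proof is correct and follows essentially the same route as the paper: both directions hinge on the unique vertical ($\nu$-projectable) lift of $w\in\operatorname{Poiss}(K,\Upsilon)$ through the bi-fibration, with \eqref{PC1} upgrading that lift to a Hamiltonian vector field and \eqref{A4} then forcing $w$ to be Hamiltonian, while the converse direction uses the projection of $P^{\sharp}df$ exactly as in Lemma \ref{lemma:ExistsPoiss}. Your write-up is in fact slightly more complete than the paper's, since you explicitly check that the lift $\tilde{w}$ commutes with $\Gamma_{\pi\text{-}\mathrm{pr}}(T\mathcal{F})$ (which is needed before \eqref{A4} can be applied) and justify $[\tilde{w},P]=0$ via injectivity of $\wedge^{2}(d\nu|_{\mathbb{V}})$ on vertical bivectors, two points the paper leaves implicit.
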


\begin{proof}
First assume that $H_{\Upsilon}^{1}(K)=\{0\}$ and let $f\in C_{E}^{\infty}$ be such that $P^{\sharp}df$ preserves the foliation $\mathcal{F}$. As in the proof of Lemma \ref{lemma:ExistsPoiss}, $P^{\sharp}df$ is $\nu$-related to some infinitesimal automorphism $w\in\mathfrak{X}(K)$ of $\Upsilon$. By hypothesis, $w=\Upsilon^{\sharp}dh$, so $P^{\sharp}d(\nu^{\ast}h)=P^{\sharp}df$, by the uniqueness of the horizontal lift of  $w\in\mathfrak{X}(K)$ in the fibration $\nu:E\rightarrow K$ with horizontal distribution $\ker d\pi$. Therefore, condition \eqref{A4} holds. Conversely, let $w\in\mathfrak{X}(K)$ be an infinitesimal automorphism of $\Upsilon$. If $W\in\mathfrak{X}(E)$ is the horizontal lift of $w$
as described in above, then $[W,P]\in\Gamma(\mathbb{V)}\cap\Gamma(\ker d\nu)=\{0\}$, so $W\in\operatorname{Poiss}_{V}(E,P)$. By \eqref{PC1}, $W$ is
Hamiltonian. Furthermore, it follows from \eqref{A4} that, $W=P^{\sharp}d(\nu^{\ast}h)$ for some $h\in C^{\infty}(K)$. Hence, $w=\Upsilon^{\sharp}dh$.
\end{proof}

Summarizing the above considerations, we arrive at the following result.

\begin{theorem}\label{Teo:BiFib}
Let $K\overset{\nu}{\longleftarrow}E\overset{\pi}{\longrightarrow}B$ be a transversal bi-fibration, that is, $\nu$ and $\pi$ are surjective submersions and
\begin{equation}\label{eq:BiFib}
TE = \ker d\nu\mathcal{\oplus}\ker d\pi.
\end{equation}
Suppose that $\nu$ has connected fibers and satisfies the following compatibility condition with a Poisson tensor $P\in\Gamma(\wedge^{2}\ker d\pi)$:
\begin{equation}\label{ZX0}
\Gamma_{\pi\text{-}\mathrm{pr}}(\ker d\nu)\subset\operatorname{Poiss}(E,P).
\end{equation}
Let $\mathcal{F}$ be the regular foliation on $E$ with $T\mathcal{F}=\ker d\nu$ and $\Upsilon$ be a unique Poisson structure on $K$ for which the natural projection $\nu:E\rightarrow K$ is a Poisson map. Suppose we are given a coupling Poisson structure $\Pi=\Pi_{2,0}+\Pi_{0,2}$ on the fiber
bundle $\pi:E\rightarrow B$ with vertical part $\Pi_{0,2}=P$. Then, the first Poisson cohomology group of $\Pi$ on $E$ is trivial if
\begin{align}
\frac{\operatorname{Poiss}_{V}(E,P)}{\operatorname{Ham}(E,P)}&=\{0\},\label{ZX1}\\
H_{\operatorname{dR}}^{1}(\mathcal{F})&=\{0\}, \label{ZX2}\\
H_{\Upsilon}^{1}(K)&=\{0\}. \label{ZX3}
\end{align}
\end{theorem}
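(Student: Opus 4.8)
The plan is to deduce this statement directly from Theorem \ref{teo:Vanish1} together with Lemma \ref{lemma:Hups0}, since the transversal bi-fibration hypothesis is precisely engineered to realize the flat Poisson bundle setup appearing there. The only genuine work is to translate the three vanishing conditions \eqref{ZX1}--\eqref{ZX3} into the four hypotheses \eqref{PC1}, \eqref{PC4}, (i), (ii) of Theorem \ref{teo:Vanish1}.

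First I would produce the flat Poisson connection required by \eqref{PC4}. Since $\nu$ is a surjective submersion, the distribution $\ker d\nu$ is integrable and defines the regular foliation $\mathcal{F}$ with $T\mathcal{F}=\ker d\nu$; by the transversality \eqref{eq:BiFib} this subbundle is complementary to $\mathbb{V}=\ker d\pi$, so it is the horizontal distribution of a unique Ehresmann connection $\gamma^{0}$ on $\pi:E\rightarrow B$. Integrability of $\ker d\nu$ makes $\gamma^{0}$ flat (zero curvature), and the horizontal lifts $\operatorname{hor}^{\gamma^{0}}(u)$ are exactly the $\pi$-projectable sections of $T\mathcal{F}$; hence the compatibility condition \eqref{ZX0} asserts precisely that these lifts lie in $\operatorname{Poiss}(E,P)$, which is \eqref{PC7}. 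Thus $(E,P,\mathcal{F})$ is a flat Poisson bundle and \eqref{PC4} holds. Condition \eqref{ZX1} is identically \eqref{PC1}, and \eqref{ZX2} is identically hypothesis (i).

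It then remains to verify hypothesis (ii), i.e. the algebraic identity \eqref{A4}. Here I would invoke Lemma \ref{lemma:Hups0}: because $\mathcal{F}$ is not merely a foliation but the fibration $\nu:E\rightarrow K$ with connected fibers and leaf space $K$, that lemma asserts that, under the already-established \eqref{PC1} and \eqref{PC4}, condition \eqref{A4} is equivalent to the triviality of the first Poisson cohomology of the quotient structure $\Upsilon$. This quotient Poisson tensor exists and is unique by Lemma \ref{lemma:ExistsPoiss}, and its cohomology vanishes by assumption \eqref{ZX3}. Consequently \eqref{A4} holds, all hypotheses of Theorem \ref{teo:Vanish1} are in force, and $H^{1}_{\Pi}(E)=\{0\}$ follows; equivalently, under \eqref{PC1} the first Poisson cohomology reduces to $H^{1}_{\bar{\partial}}$ by \eqref{PC5}, and the argument shows this group is trivial.

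The step I expect to be the most delicate is not any single computation but checking that the hypotheses of Lemma \ref{lemma:Hups0} are genuinely met --- in particular that the connectedness of the $\nu$-fibers is what licenses the identification $C^{\infty}_{\mathcal{F}}(E)=\nu^{\ast}C^{\infty}(K)$ underlying the equivalence between \eqref{A4} and $H^{1}_{\Upsilon}(K)=\{0\}$. Everything else is a matter of matching the bi-fibration data $(K,\nu,\pi,P)$ with the flat-Poisson-bundle data $(E\rightarrow B,P,\gamma^{0})$, so that the general vanishing criterion applies verbatim.
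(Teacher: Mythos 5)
Your proposal is correct and takes essentially the same route as the paper: there the theorem is presented precisely as a summary of Theorem~\ref{teo:Vanish1} combined with Lemmas~\ref{lemma:ExistsPoiss} and~\ref{lemma:Hups0}, which is exactly your reduction (the bi-fibration plus \eqref{ZX0} yields the flat Poisson connection of \eqref{PC4}, conditions \eqref{ZX1} and \eqref{ZX2} are \eqref{PC1} and hypothesis (i), and Lemma~\ref{lemma:Hups0} converts \eqref{ZX3} into hypothesis (ii)). Your closing remark about the connectedness of the $\nu$-fibers licensing $C^{\infty}_{\mathcal{F}}(E)=\nu^{\ast}C^{\infty}(K)$ is also precisely the point the paper's lemmas depend on.
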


We end this section with the following remarks about conditions \eqref{ZX0}-\eqref{ZX3}. First we notice that if the $\nu$-fibers are simply connected, then condition \eqref{ZX2} holds (see, for example, \cite{DazHec-91}).

Moreover, by \eqref{eq:BiFib}, the restriction of the surjective submersion $\nu$ to each $\pi$-fiber  $\nu|_{\pi^{-1}(x)}:(\pi^{-1}(x),P_{x})\rightarrow(K,\Upsilon)$ is a local Poisson diffeomorphism. We claim that conditions \eqref{ZX0} and \eqref{ZX1} imply \eqref{ZX3} if the restriction $\nu|_{E_{0}}:E_{0}\rightarrow K$ is bijective for a single fiber $E_{0}:=\pi^{-1}(x_{0})$.
This fact is based on the following observation: the lifting $W\in\Gamma_{\nu\text{-}\operatorname{pr}}(\mathbb{V})$ of every element $w\in\operatorname{Poiss}(K,\Upsilon)$ is a Poisson vector field on $(E,P)$ (see Lemma \ref{lemma:Hups0}). 

\section{Isotropy Algebras of Compact Semisimple type}\label{Sec:Isotropy}

The triviality condition (\ref{PC1}) is realized in the following case. Let $(E\overset{\pi}{\rightarrow}B,P)$ be a locally trivial Lie-Poisson bundle whose typical fiber is the co-algebra $(\mathfrak{g}^{\ast},\Lambda)$ of a semisimple Lie algebra $\mathfrak{g}$ of compact type. Recall that this condition means that the Killing form is negative definite or, equivalently, that the connected and simply connected Lie group integrating $\mathfrak{g}$ is compact. Due to \cite{Conn-85} (see, also \cite{Marcut-13}), we have $H_{\Lambda}^{1}(\mathfrak{g}^{\ast})=0$. Moreover, there exist the linear homotopy operators for the Poisson complex of $(\mathfrak{g}^{\ast},\delta_{\Lambda})$ in degree 1:
\[
C^{\infty}(\mathfrak{g}^{\ast})\overset{h_{0}}{\longleftarrow}\mathfrak{X}(\mathfrak{g}^{\ast})\overset{h_{1}}{\longleftarrow}\mathfrak{X}^{2}(\mathfrak{g}^{\ast}),
\]
\[
\delta_{\Lambda}\circ h_{0} + h_{1}\circ\delta_{\Lambda} = \operatorname{Id}_{\mathfrak{X}(\mathfrak{g}^{\ast})}.
\]
Observe that this fact remains true if instead of $\mathfrak{g}^{\ast}$ we take an open ball (with respect to the invariant inner product in  $\mathfrak{g}^{\ast})$ centered at the origin. The existence of the homotopy operators imply the triviality of the parametrized first Poisson cohomology groups of the Lie-Poisson structure $\Lambda$. Combining this fact with the partition unity argument, we conclude that the first vertical
cohomology group of $P$ is also trivial.

\begin{remark}
The triviality property of the parametrized first cohomology groups appears also in the context of the \emph{tame Poisson structures}, introduced in \cite{MZ-06}.
\end{remark}

Now, as an illustration of Theorem \ref{Teo:BiFib}, let us consider the following situation. Let $M=B\times\mathbb{R}^{k}$ be the product of a compact connected symplectic manifold $B$ and the $k$-dimensional Euclidean space $\mathbb{R}^{k}=\{x=(x^{1},\ldots,x^{k})\}$. Let us view $M$ as
the total space of the trivial vector bundle over $B$. Suppose we are given a Poisson tensor $\Pi$ on $M$ such that the zero section $B\times\{0\}$ is a symplectic leaf of $\Pi$. Assume that
\begin{equation}\label{SC}
c_{\sigma}^{\alpha\beta} := \left.\frac{\partial}{\partial x^{\sigma}}\Pi(dx^{\alpha},dx^{\beta})\right|_{x=0} = \operatorname{const} \text{ on } B.
\end{equation}
Then, $c_{\sigma}^{\alpha\beta}$ are the structure constants of a Lie algebra $\mathfrak{g}$ and condition \eqref{SC} means that the isotropy bundle of the leaf $B\times\{0\}$ is just the trivial Lie bundle $B\times\mathfrak{g}$. Observe that after a change of coordinates on the fiber, condition \eqref{SC} still holds. Combining Theorem \ref{Teo:BiFib} with Conn's results \cite{Conn-85}, we establish the following criterion which implies Claim \ref{Claim:LieBundle}.

\begin{proposition}\label{prop:SympLeaf}
If $B$ is simply connected and compact and the isotropy algebra $\mathfrak{g}$ is semisimple of compact type, then there exists an open neighborhood $E$ of $B\times\{0\}$ in $M=B\times\mathbb{R}^{k}$ such that \ $H_{\Pi}^{1}(E)=\{0\}$.
\end{proposition}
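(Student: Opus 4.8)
The plan is to bring the problem into the product Lie--Poisson form governed by Theorem~\ref{Teo:BiFib} and then to check its three cohomological hypotheses one by one. Since $B\times\{0\}$ is an embedded symplectic leaf, the coupling neighborhood theorem \cite{Vo-01} lets me write $\Pi=\Pi_{2,0}+P$ near the leaf, with $P=\Pi_{0,2}$ the vertical transverse Poisson structure. Condition~\eqref{SC} says exactly that the linearized transverse structure is the \emph{trivial} Lie--Poisson bundle $B\times(\mathfrak{g}^{\ast},\Lambda)$: the isotropy algebra $\mathfrak{g}$ and its linear Poisson tensor $\Lambda$ are the same over every point of $B$.

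First I would linearize $P$ to product form. By Conn's theorem \cite{Conn-85}, on each fiber the transverse structure is Poisson-diffeomorphic to $\Lambda$; because the isotropy bundle is trivial the common target $\Lambda$ is available over all of $B$, and the smooth dependence on parameters of Conn's homotopy operators (Section~\ref{Sec:Isotropy}) allows these fiberwise linearizations to be glued, by a partition of unity over the compact base $B$, into a single fiber-preserving diffeomorphism $\fy$ on a neighborhood $E=B\times V$, with $V\subset\mathfrak{g}^{\ast}$ an invariant ball about the origin, such that $\fy_{\ast}P$ is the product structure. Since $\fy$ preserves the vertical subbundle it carries the coupling decomposition to a coupling decomposition with vertical part $\Lambda$, and since Poisson cohomology is a diffeomorphism invariant I may replace $\Pi$ by $\fy_{\ast}\Pi$ and assume from the outset that $P$ is the product Lie--Poisson structure on $E=B\times V$.

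Next I would set up the transversal bi-fibration of Theorem~\ref{Teo:BiFib}: put $\pi=\operatorname{pr}_{1}:E\to B$ and $\nu=\operatorname{pr}_{2}:E\to V=:K$, let $\Upsilon:=\Lambda$, and let $\mathcal{F}$ be the foliation whose leaves are the slices $B\times\{v\}$, so that $T\mathcal{F}=\ker d\nu$. The splitting~\eqref{eq:BiFib} is immediate from the product form, and the compatibility~\eqref{ZX0} holds because base-direction vector fields preserve the product tensor $P$; moreover the fibers $B\times\{v\}$ of $\nu$ are connected. It then remains to verify the three vanishing conditions. Condition~\eqref{ZX1} is the triviality of the first vertical cohomology of $P$, which follows from Conn's homotopy operators together with the partition-of-unity argument of Section~\ref{Sec:Isotropy}. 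Condition~\eqref{ZX2} holds because the $\nu$-fibers $B\times\{v\}\cong B$ are simply connected, whence $H^{1}_{\operatorname{dR}}(\mathcal{F})\cong H^{1}_{\operatorname{dR}}(B)=\{0\}$. Condition~\eqref{ZX3} is $H^{1}_{\Lambda}(V)=\{0\}$, again by Conn's theorem on the ball. Theorem~\ref{Teo:BiFib} then gives $H^{1}_{\Pi}(E)=\{0\}$.

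I expect the main obstacle to be the parametrized linearization in the second step. A single application of Conn's theorem only treats one fiber; what is really needed is that the linearizing coordinates can be chosen to depend smoothly on the point of $B$ and to glue into a global product structure. This is precisely where both hypotheses on $B$ are used: the triviality of the isotropy bundle provides a common linear model $\Lambda$ for every fiber, while the compactness of $B$ makes the partition-of-unity gluing effective. The smoothness in parameters of Conn's homotopy operators, highlighted in Section~\ref{Sec:Isotropy}, is exactly the ingredient that upgrades the pointwise linearization statement to the parametrized one on which the whole argument rests.
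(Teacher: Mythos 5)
Your proposal is correct and follows essentially the same route as the paper: both reduce the statement to Theorem~\ref{Teo:BiFib} by first linearizing the vertical Poisson structure via Conn's theorem into the product form $B\times(\mathfrak{g}^{\ast},\Lambda)$ through a global fiber-preserving diffeomorphism (the paper cites \cite{Vo-05} for exactly the parametrized gluing you sketch), and then verifying conditions \eqref{ZX1}--\eqref{ZX3} using Conn's homotopy operators with a partition of unity, the simple connectedness of $B$ for the foliated cohomology, and Conn's vanishing $H^{1}_{\Lambda}(K)=\{0\}$ on a ball.
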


\begin{proof}
We have to verify that the hypotheses of the proposition imply conditions \eqref{ZX1}-\eqref{ZX3}. First, we observe that $\Pi=\Pi_{2,0}+\Pi_{0,2}$ is a coupling Poisson structure in a neighborhood $E$ of $B\times\{0\}$ in $M=B\times\mathbb{R}^{k}$ which is viewed as \ the total space of the fiber bundle $\pi:=\operatorname{pr}_{1}|_{E}$ over $B$. \ By \eqref{SC}, the linearization of the vertical Poisson structure \ $P=\Pi_{0,2}$ at
$B\times\{0\}$ gives
\begin{equation}\label{LL}
P^{(1)} = \frac{1}{2}c_{\sigma}^{\alpha\beta}x^{\sigma}\frac{\partial}{\partial x^{\alpha}}\wedge\frac{\partial}{\partial x^{\beta}}.
\end{equation}
By the linearization Conn theorem, for every $b\in B$, the Poisson structure $P_{b}$ on the fiber $E_{b}$ around $0$ is isomorphic to the Lie-Poisson structure $\Lambda$ on $\mathfrak{g}^{\ast}$. Then, one can show \cite{Vo-05} that the neighborhood $E$ can be chosen in such a way that there exists a fiber preserving diffeomorphism $g:E\rightarrow g(E)$ identical on $B$ and $g_{\ast}P=P^{(1)}.$ \ So, we obtain the coupling Poisson tensor $g_{\ast}\Pi=$ $g_{\ast}\Pi_{2,0}+P^{(1)}$ defined on the neighborhood $g(E)$ of $B$. \ Then, as we mentioned above, condition \eqref{ZX1} holds for $P^{(1)}$. \ Moreover, by the compactness of $B$, one can arrange the neighborhood $E$ to have $g(E)=B\times K$, where $K$ is an open ball centered at $0$. Then, condition \eqref{ZX3} holds for $\Upsilon=\Lambda$, $H_{\Lambda}^{1}(K)=\{0\}$. Finally, by condition \eqref{SC}, there exists a flat Poisson connection $\gamma^{0}$ on $(B\times K,P^{(1)})$ associated with the horizontal foliation $\mathcal{F}$ with leaves  $B\times\{x\},$ $x\in K$. Then, the foliated de Rham cohomology of $d_{\mathcal{F}}$ \ is the same thing as the de Rham cohomology of the forms in $B$ depending smoothly on $x\in K$ as a parameter. \ Since $B$ is simply connected, according to results in \cite{DazHec-91,GLSW}, we conclude that
$H_{\operatorname{dR}}^{1}(\mathcal{F})=\{0\}$.
\end{proof}

\begin{example}
Consider the case when $B=\mathbb{S}^{2}\subset\mathbb{R}^{3}$ is the unit 2-sphere equipped with the area form $\omega=dp\wedge
dq$. \ Here, the Darboux coordinates $p,q$ can be defined as the azimuthal angle $p=\varphi$ and the height function $q=h$ on the sphere. Then, given a vector valued 1-form $\mathbf{\varrho}$ on $B$,
\[
\varrho=\varrho^{(1)}(p,q,x)dp+\varrho^{(2)}(p,q,x)dq,
\]
with $\varrho^{(1)},\varrho^{(2)}\in\mathbb{R}^{3}$, and a constant $c\in\mathbb{R}$, one can define the following Poisson tensor on $M=\mathbb{S}^{2}\times\mathbb{R}^{3}$ \cite{Vo-05}:
\begin{align*}
\Pi_{\varrho,c}  &= \frac{1}{2\left(1 - \Delta_{\varrho} + c\|x\|^{2}\right)} \left(\frac{\partial}{\partial p}+(x\times\varrho^{(1)}) \cdot \frac{\partial}{\partial x}\right) \wedge \left(\frac{\partial}{\partial q} + (x\times\varrho^{(2)})\cdot\frac{\partial}{\partial x}\right) \\
&+\frac{1}{2}\epsilon_{\alpha\beta\gamma}x^{\gamma}\frac{\partial}{\partial x^{\alpha}}\wedge\frac{\partial}{\partial x^{\beta}},
\end{align*}
where $\Delta_{\varrho}:=\frac{\partial\varrho^{(2)}}{\partial p} - \frac{\partial\varrho^{(1)}}{\partial q} + x\cdot\frac{\partial\varrho^{(1)}}{\partial x} \times\frac{\partial\varrho^{(2)}}{\partial x}$. In this case, $\mathbb{S}^{2}\times\{0\}$ is a simply connected, compact symplectic leaf of $\Pi_{\varrho,c}$ whose isotropy Lie algebra is $\mathfrak{g}=\operatorname{so}(3)$. Therefore, by Proposition \ref{prop:SympLeaf} we conclude that the first cohomology of $\Pi_{\varrho,c}$ vanishes for arbitrary data $(\varrho,c)$. In
particular, this is true for the product Poisson structure $\Pi_{0,0}$ on $\mathbb{S}^{2}\times\operatorname{so}^{\ast}(3)$.
\end{example}

\section{Projectability of Casimir Functions}\label{Sec:ProyCasim}

Let $(E\overset{\pi}{\rightarrow}B,P)$ be again a Poisson bundle and $\Pi$ a coupling Poisson structure on $E$ with associated geometric data $(\gamma,\sigma,P)$. Let us consider another extreme situation, assuming that every Casimir function of the vertical Poisson structure $P$ is \emph{projectable} in the sense that
\begin{equation}\label{PC2}
\operatorname{Casim}(E,P)=\pi^{\ast}C^{\infty}(B).
\end{equation}
So, this means that $P^{\sharp}dF=0$ if and only if $F=\pi^{\ast}f$ for a certain $f\in C^{\infty}(B)$.

\begin{example}\label{ex:OpenBook}
Let
\begin{equation}\label{LA2}
\Lambda = \frac{\partial}{\partial x_{1}} \wedge \left(x_{2}\frac{\partial}{\partial x_{2}} + x_{3}\frac{\partial}{\partial x_{3}}\right)
\end{equation}
be the Lie-Poisson structure on the co-algebra $\mathfrak{g}^{\ast}=\mathbb{R}^{3}$ of the 3-dimensional Lie algebra
\begin{equation}\label{LA}
[e_{1},e_{2}]=e_{2},\quad[e_{2},e_{3}]=0,\quad[e_{3},e_{1}]=-e_{3}.
\end{equation}
In this case, the foliation of $\mathbb{R}^{3}$ by the symplectic leaves (the co-adjoint orbits) is an open book type foliation. As a consequence, the corresponding Lie-Poisson structure $\Lambda$ on $\mathfrak{g}^{\ast}$ does not admit any global nontrivial Casimir function on $\mathbb{R}^{3}$, that is, $\operatorname{Casim}(\Lambda,\mathbb{R}^{3})=\mathbb{R}$. One can show that the first cohomology group of the Lie-Poisson structure $\Lambda$ in \eqref{LA2} is generated by the Poisson vector fields
\begin{equation}\label{eq:BasisZ}
Z_{1}=\frac{\partial}{\partial x_{1}},
Z_{2}=x_{2}\frac{\partial}{\partial x_{2}}-x_{3}\frac{\partial}{\partial x_{3}},
Z_{3}=x_{3}\frac{\partial}{\partial x_{2}},
Z_{4}=x_{2}\frac{\partial}{\partial x_{3}}
\end{equation}
and, hence, isomorphic to $\mathbb{R}\times\mathfrak{sl}(2,\mathbb{R})$ as Lie algebras.
\end{example}

It follows that condition \eqref{PC2} holds for any locally trivial Lie-Poisson bundle $(\pi:E\rightarrow B,P)$ over $B$ whose typical fiber is just $\mathbb{R}^{3}$ equipped with linear Poisson bracket \eqref{LA2}.

\begin{remark}
The fact that $H^{1}_{\Lambda}(\mathbb{R}^{3})$ is generated by the basis \eqref{eq:BasisZ} can be stated by direct computations. It is of interest to note that in the regular domain $\mathbb{R}^{3}\backslash\{x_{1}\text{-axis}\}\cong\mathbb{R}^{2}\times\mathbb{S}^{1}$, the Poisson structure \eqref{LA2} has nontrivial Casimir functions and, as a consequence, the first Poisson cohomology group is infinite dimensional and isomorphic to $C^{\infty}(\mathbb{S}^{1})$ \cite{AG}. More examples of explicit computations of the first cohomology of low-dimensional Poisson manifolds with singularities can be also found in \cite{Mon-02,Nak-95}.
\end{remark}

Now, we observe that the condition \eqref{PC2} implies that the cochain complex $(\mathcal{C}^{*},\bar\partial^{\gamma})$ is isomorphic to the de Rham complex $(\Omega^{*}(B),d_{B})$ on the base $B$. Therefore, in this case we have $H_{\bar{\partial}^{\gamma}}^{k}\cong H_{\operatorname{dR}}^{k}(B)$ $\forall k\geq0$.

\begin{proposition}\label{prop:CasimCase0}
  If, in addition to \eqref{PC2}, the second de Rham cohomology of the base $B$ is trivial,
  \begin{equation}\label{PCF2}
  H_{\operatorname{dR}}^{2}(B)=\{0\},
  \end{equation}
  then
  \begin{equation}\label{PCF9}
  \operatorname{Poiss}(E,\Pi)\cong\Omega_{\operatorname{cl}}^{1}(B)\oplus\mathcal{A}^{\gamma},
  \end{equation}
  and the first Poisson cohomology of $\Pi$ is of the form
  \begin{equation}\label{PCF3}
  H_{\Pi}^{1}(E) \cong H_{\operatorname{dR}}^{1}(B)\oplus\frac{\mathcal{A}^{\gamma}}{\operatorname{Ham}(E,P)}.
  \end{equation}
  Here, $\mathcal{A}^{\gamma}$ is a Lie subalgebra of vertical Poisson vector fields of $P$ defined in \eqref{ALP}.
\end{proposition}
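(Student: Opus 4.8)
The plan is to read off both isomorphisms directly from the two splitting results already established for a general coupling structure --- the isomorphism \eqref{eq:PoissSplit} for $\operatorname{Poiss}(E,\Pi)$ and the cohomological isomorphism \eqref{PC} for $H_{\Pi}^{1}(E)$ --- and then to insert the simplifications forced by the hypotheses \eqref{PC2} and \eqref{PCF2}.

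First I would invoke the isomorphism of cochain complexes $(\mathcal{C}^{*},\bar{\partial}^{\gamma})\cong(\Omega^{*}(B),d_{B})$ which is a consequence of \eqref{PC2} and was recorded just above the statement. Since a cochain isomorphism carries cocycles to cocycles and coboundaries to coboundaries in each degree, it identifies $Z_{\bar{\partial}^{\gamma}}^{1}$ with the space $\Omega_{\operatorname{cl}}^{1}(B)$ of closed $1$-forms on $B$, and induces $H_{\bar{\partial}^{\gamma}}^{p}\cong H_{\operatorname{dR}}^{p}(B)$ in every degree $p$.

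The key observation is then that, by \eqref{PCF2}, the target of the homomorphism $\rho^{\gamma}\colon\mathcal{A}^{\gamma}\rightarrow H_{\bar{\partial}^{\gamma}}^{2}$ from \eqref{Hom} satisfies $H_{\bar{\partial}^{\gamma}}^{2}\cong H_{\operatorname{dR}}^{2}(B)=\{0\}$. Hence $\rho^{\gamma}$ is necessarily the zero map, so $\ker\rho^{\gamma}=\mathcal{A}^{\gamma}$. Substituting $Z_{\bar{\partial}^{\gamma}}^{1}\cong\Omega_{\operatorname{cl}}^{1}(B)$ and $\ker\rho^{\gamma}=\mathcal{A}^{\gamma}$ into \eqref{eq:PoissSplit} yields \eqref{PCF9}; substituting $H_{\bar{\partial}^{\gamma}}^{1}\cong H_{\operatorname{dR}}^{1}(B)$ together with $\ker\rho^{\gamma}=\mathcal{A}^{\gamma}$ into \eqref{PC} yields \eqref{PCF3}.

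There is no genuine obstacle here, as the statement is essentially a corollary of the general machinery; the only point requiring a moment of care is the degreewise identification $Z_{\bar{\partial}^{\gamma}}^{1}\cong\Omega_{\operatorname{cl}}^{1}(B)$ at the level of cocycles (rather than merely in cohomology), which is automatic for an isomorphism of cochain complexes. One should also keep track that $\operatorname{Ham}(E,P)$ is intrinsic to the vertical Poisson structure $P$ and is unaffected by these identifications, so that the quotient appearing in \eqref{PCF3} is literally $\mathcal{A}^{\gamma}/\operatorname{Ham}(E,P)$.
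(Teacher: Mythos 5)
Your proposal is correct and follows essentially the same route as the paper: the paper likewise uses the identification $(\mathcal{C}^{*},\bar{\partial}^{\gamma})\cong(\Omega^{*}(B),d_{B})$ forced by \eqref{PC2}, deduces $H_{\bar{\partial}^{\gamma}}^{2}=\{0\}$ from \eqref{PCF2} so that $\ker\rho^{\gamma}=\mathcal{A}^{\gamma}$, and then reads off \eqref{PCF9} and \eqref{PCF3} from \eqref{eq:PoissSplit} and \eqref{PC}. Your additional remark on the cocycle-level identification $Z_{\bar{\partial}^{\gamma}}^{1}\cong\Omega_{\operatorname{cl}}^{1}(B)$ is a fine point the paper leaves implicit, but it is the same argument.
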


\begin{proof}
If \eqref{PCF2} holds, then $H_{\bar{\partial}_{1,0}}^{2}=\{0\}$. Therefore, by definition \eqref{Hom}, we get
$\ker\rho^{\gamma}=\mathcal{A}^{\gamma}$. Hence, the decompositions \eqref{eq:PoissSplit} and \eqref{PC} coincide with \eqref{PCF9} and \eqref{PCF3}, respectively.
\end{proof}

As a consequence of Proposition \ref{prop:CasimCase0}, hypotheses \eqref{PC2} and \eqref{PCF2} imply that the properties of the first Poisson cohomology of $\Pi$ are controlled by the $\gamma$-dependent Lie algebra $\mathcal{A}^{\gamma}$. In fact, the algebra $\mathcal{A}^{\gamma}$ depends on an equivalence class of the Poisson connection $\gamma$ on $(E\overset{\pi}{\rightarrow}B,P)$. Indeed, suppose we have another Poisson connection $\widetilde{\gamma}$ on $(E\overset{\pi}{\rightarrow}B,P)$ which is equivalent to $\gamma$ in following sense: there exists a 1-form $\varrho\in\Omega_{B}^{1,0}(E)$ such that $\operatorname{hor}^{\widetilde{\gamma}}(u)=\operatorname{hor}^{\gamma}(u)+P^{\sharp}d\varrho(u)$, for every $u\in\mathfrak{X}(B)$. Then, $\widetilde{\gamma}\sim\gamma$ implies $\mathcal{A}^{\widetilde{\gamma}}=\mathcal{A}^{\gamma}$.

It is useful also to single out the Lie subalgebra of $\mathcal{A}^{\gamma}$ consisting of all vertical Poisson vector fields on $(E,P)$ that preserve the horizontal subbundle of $\gamma$,
\[
\mathcal{A}_{0}^{\gamma}:=\{Y\in\operatorname{Poiss}_{V}(E,P)\mid [\operatorname{hor}^{\gamma}(u),Y]=0\text{\ }\forall u\in\mathfrak{X}(B)\}.
\]
Then, taking into account \eqref{PC2}, we get
\[
\operatorname{Ham}_{0}(E,P) := \mathcal{A}_{0}^{\gamma}\cap\operatorname{Ham}(E,P)=\{P^{\sharp}dF\mid L_{\operatorname{hor}^{\gamma}(u)}F\in\pi^{\ast}C^{\infty}(B)~\forall
u\in\mathfrak{X}(B)\}.
\]

An interesting situation occurs when
\begin{equation}\label{N}
\mathcal{A}^{\gamma}\cong\frac{\mathcal{A}_{0}^{\gamma}}{\operatorname{Ham}_{0}(E,P)}\oplus\operatorname{Ham}(E,P).
\end{equation}
In this case,
\begin{equation}\label{HH}
H_{\Pi}^{1}(E)\cong H_{\operatorname{dR}}^{1}(B)\oplus\frac{\mathcal{A}_{0}^{\gamma}}{\operatorname{Ham}_{0}(E,P)}.
\end{equation}

\begin{lemma}
Condition \eqref{N} is equivalent to the following: every $Y\in\mathcal{A}^{\gamma}$ admits the decomposition
\begin{equation}\label{N2}
Y=P^{\sharp}dG+Y_{0},
\end{equation}
where $Y_{0}\in\mathcal{A}_{0}^{\gamma}$ and $G\in C^{\infty}(E)$. Furthermore, given $\beta\in\Omega^{1}(B)\otimes_{C^{\infty}(B)} C^{\infty}(E)$ in \eqref{3F}, there exists $c\in\Omega^{1}(B)$ such that
\begin{equation}\label{N3}
\partial^{\gamma}_{1,0}G = \beta - c\otimes 1.
\end{equation}
\end{lemma}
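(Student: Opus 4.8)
The plan is to strip condition \eqref{N} down to a single sum decomposition and then read off both assertions. First I would observe that, by definition, $\operatorname{Ham}(E,P)=\{P^{\sharp}dG\mid G\in C^{\infty}(E)\}$, and that $\operatorname{Ham}(E,P)$ is an ideal of $\mathcal{A}^{\gamma}$ while $\mathcal{A}_{0}^{\gamma}\subseteq\mathcal{A}^{\gamma}$ with $\mathcal{A}_{0}^{\gamma}\cap\operatorname{Ham}(E,P)=\operatorname{Ham}_{0}(E,P)$. Consequently, asserting that every $Y\in\mathcal{A}^{\gamma}$ has the form \eqref{N2} with $Y_{0}\in\mathcal{A}_{0}^{\gamma}$ and $G\in C^{\infty}(E)$ is literally the statement $\mathcal{A}^{\gamma}=\mathcal{A}_{0}^{\gamma}+\operatorname{Ham}(E,P)$. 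So the first part of the lemma reduces to proving that \eqref{N} is equivalent to this sum decomposition.

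For that equivalence I would use the quotient map $q\colon\mathcal{A}^{\gamma}\rightarrow\mathcal{A}^{\gamma}/\operatorname{Ham}(E,P)$. Its restriction to $\mathcal{A}_{0}^{\gamma}$ has kernel $\mathcal{A}_{0}^{\gamma}\cap\operatorname{Ham}(E,P)=\operatorname{Ham}_{0}(E,P)$ and image $(\mathcal{A}_{0}^{\gamma}+\operatorname{Ham}(E,P))/\operatorname{Ham}(E,P)$, so it induces an injection $\mathcal{A}_{0}^{\gamma}/\operatorname{Ham}_{0}(E,P)\hookrightarrow\mathcal{A}^{\gamma}/\operatorname{Ham}(E,P)$ which is an isomorphism exactly when $\mathcal{A}^{\gamma}=\mathcal{A}_{0}^{\gamma}+\operatorname{Ham}(E,P)$. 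In that case the sequence $0\rightarrow\operatorname{Ham}(E,P)\rightarrow\mathcal{A}^{\gamma}\rightarrow\mathcal{A}_{0}^{\gamma}/\operatorname{Ham}_{0}(E,P)\rightarrow0$ is exact and, being a sequence of vector spaces, splits, which is precisely \eqref{N}; conversely, reading \eqref{N} through these canonical maps forces the projection onto $\mathcal{A}_{0}^{\gamma}/\operatorname{Ham}_{0}(E,P)$ to be surjective, i.e.\ gives the sum decomposition. This step is routine once the identifications are in place.

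For the ``Furthermore'' part I would start from a decomposition $Y=P^{\sharp}dG+Y_{0}$ and evaluate $[\operatorname{hor}^{\gamma}(u),Y]$ in two ways. Since $Y_{0}\in\mathcal{A}_{0}^{\gamma}$ gives $[\operatorname{hor}^{\gamma}(u),Y_{0}]=0$, and since $\operatorname{hor}^{\gamma}(u)$ is a Poisson vector field of $P$ by \eqref{CC2}, the identity $[X,P^{\sharp}dF]=P^{\sharp}d\,L_{X}F$ together with $(\partial_{1,0}^{\gamma}G)(u)=L_{\operatorname{hor}^{\gamma}(u)}G$ from \eqref{Def} yields $[\operatorname{hor}^{\gamma}(u),Y]=P^{\sharp}d\big((\partial_{1,0}^{\gamma}G)(u)\big)$. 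On the other hand, the defining relation \eqref{3F} for the given $\beta$ reads $[\operatorname{hor}^{\gamma}(u),Y]=-P^{\sharp}d(\beta(u))$. Comparing, $P^{\sharp}d\big((\partial_{1,0}^{\gamma}G)(u)+\beta(u)\big)=0$ for all $u\in\mathfrak{X}(B)$, so $\partial_{1,0}^{\gamma}G+\beta$ takes values in $\operatorname{Casim}(E,P)$ and hence lies in $\mathcal{C}^{1}$. Invoking the projectability hypothesis \eqref{PC2}, every element of $\mathcal{C}^{1}=\Omega^{1}(B)\otimes_{C^{\infty}(B)}\operatorname{Casim}(E,P)$ is of the form $c\otimes1$ with $c\in\Omega^{1}(B)$, since $\operatorname{Casim}(E,P)=\pi^{\ast}C^{\infty}(B)$ allows one to absorb the fiber factor into the base via $\alpha\otimes\pi^{\ast}g=(g\alpha)\otimes1$. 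This gives the relation \eqref{N3} (with the sign dictated by the conventions fixed in \eqref{N2} and \eqref{3F}).

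The main obstacle I anticipate is interpretive rather than computational: one must argue that the abstract direct sum in \eqref{N} is to be read through the natural inclusion of $\operatorname{Ham}(E,P)$ and the natural projection to $\mathcal{A}_{0}^{\gamma}/\operatorname{Ham}_{0}(E,P)$, so that it genuinely records the surjectivity of $\mathcal{A}_{0}^{\gamma}/\operatorname{Ham}_{0}(E,P)\rightarrow\mathcal{A}^{\gamma}/\operatorname{Ham}(E,P)$ and hence the decomposition $\mathcal{A}^{\gamma}=\mathcal{A}_{0}^{\gamma}+\operatorname{Ham}(E,P)$, rather than being a vacuous dimension count. A secondary point worth verifying carefully is the exact sign relating $\beta$ and $\partial_{1,0}^{\gamma}G$ in \eqref{N3}, which is forced by the sign of $P^{\sharp}dG$ chosen in \eqref{N2} and by the minus sign in \eqref{3F}.
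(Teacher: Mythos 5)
The paper states this lemma without proof, so there is no argument of the authors' to compare against; your proposal supplies the missing one, and its two-step structure is surely the intended route. The first step is correct as you give it: demanding \eqref{N2} for every $Y\in\mathcal{A}^{\gamma}$ is literally $\mathcal{A}^{\gamma}=\mathcal{A}_{0}^{\gamma}+\operatorname{Ham}(E,P)$, and \eqref{N} --- read through the natural inclusion of $\operatorname{Ham}(E,P)$ and the injection $\mathcal{A}_{0}^{\gamma}/\operatorname{Ham}_{0}(E,P)\hookrightarrow\mathcal{A}^{\gamma}/\operatorname{Ham}(E,P)$, which is exactly how the paper uses \eqref{N} to pass from \eqref{PCF3} to \eqref{HH} --- holds precisely when that injection is onto, i.e.\ when the sum decomposition holds. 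Your interpretive caveat is the right one and is settled by the paper's own use of the formula.

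The one real defect is the point you deferred: the sign in \eqref{N3}. Your computation is correct --- with $Y=P^{\sharp}dG+Y_{0}$, $[\operatorname{hor}^{\gamma}(u),Y_{0}]=0$, condition \eqref{CC2} and the identity $[X,P^{\sharp}dF]=P^{\sharp}dL_{X}F$ give $[\operatorname{hor}^{\gamma}(u),Y]=P^{\sharp}d\bigl((\partial_{1,0}^{\gamma}G)(u)\bigr)$, and comparing with \eqref{3F} and using \eqref{PC2} yields $\partial_{1,0}^{\gamma}G+\beta=c\otimes 1$ for some $c\in\Omega^{1}(B)$ --- but this is \emph{not} \eqref{N3}: it reads $\partial_{1,0}^{\gamma}G=-\beta+c\otimes 1$. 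The existential quantifier over $c$ absorbs the sign of $c$, not the relative sign between $\partial_{1,0}^{\gamma}G$ and $\beta$, so your parenthetical claim that the conventions ``dictate'' the printed sign is not tenable. Indeed the printed sign cannot be correct: if \eqref{N3} as printed held together with the relation your computation forces, then $2\beta\in\mathcal{C}^{1}$, hence $[\operatorname{hor}^{\gamma}(u),Y]=-P^{\sharp}d\beta(u)=0$ for all $u\in\mathfrak{X}(B)$, so every $Y\in\mathcal{A}^{\gamma}$ would lie in $\mathcal{A}_{0}^{\gamma}$ --- a condition strictly stronger than \eqref{N} that would trivialize the lemma. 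The correct conclusion is that the relative sign in \eqref{N3} is an erratum of the paper (equivalently, \eqref{N2} should read $Y=-P^{\sharp}dG+Y_{0}$), and your proof should assert the relation it actually derives, $\partial_{1,0}^{\gamma}G=-\beta+c\otimes 1$, rather than claim agreement with the printed formula. With that emendation your argument is complete, including the observation (needed to produce $c$) that under \eqref{PC2} any element of $\Omega_{B}^{1,0}(E)$ with Casimir values is of the form $c\otimes 1$.
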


Next, let us consider the following particular case. Let $E=B\times K$ be the product of a manifold $B$ equipped with zero Poisson structure and a Poisson manifold $(K,\Upsilon)$. Let $P$ be the product Poisson structure on $E$. Then, we have the trivial Poisson bundle $(E=B\times K,P)$ over $B$ with projection $\pi_{B}=\operatorname{pr}_{1}$ and the typical fiber $(K,\Upsilon)$. Consider the trivial Poisson connection $\gamma^{0}$ on $E$ associated with the canonical horizontal distribution $\ker(d\pi_{K})$, where $\pi_{K}=\operatorname{pr}_{2}$.

\begin{proposition}\label{prop:CasimCase}
Let $\Pi$ be a compatible coupling Poisson tensor on the trivial Poisson bundle $(E=B\times K,P)$ in the sense that $\Pi_{0,2}=P$ and the associated Poisson connection $\gamma$ is equivalent to the trivial one, $\gamma\sim\gamma^{0}$. Assume that $B$ is connected,
\begin{equation}\label{CC}
\operatorname{Casim}(K,\Upsilon)=\mathbb{R},
\end{equation}
condition \eqref{PCF2} holds, and $\mathcal{A}^{\gamma^{0}}$ admits splitting \eqref{N}. Then,
\begin{equation}\label{NN}
H_{\Pi}^{1}(E)\cong H_{\operatorname{dR}}^{1}(B)\oplus H_{\Upsilon}^{1}(K).
\end{equation}
\end{proposition}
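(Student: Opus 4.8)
The plan is to reduce the statement to the general formula \eqref{PCF3} of Proposition \ref{prop:CasimCase0} and then to identify the transversal summand with $H_{\Upsilon}^{1}(K)$ by exploiting the product structure $E=B\times K$. First I would verify that the projectability hypothesis \eqref{PC2} holds here. Since $P$ is the product Poisson structure and hence vertical, a function $F\in C^{\infty}(E)$ is a Casimir of $P$ if and only if, for each fixed $b\in B$, the restriction $F(b,\cdot)$ lies in $\operatorname{Casim}(K,\Upsilon)$. By hypothesis \eqref{CC} the latter consists only of constants, so $F(b,\cdot)$ is constant for every $b$, i.e. $F=\pi_{B}^{\ast}f$ for some $f\in C^{\infty}(B)$. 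This yields $\operatorname{Casim}(E,P)=\pi_{B}^{\ast}C^{\infty}(B)$, which is \eqref{PC2}. Combined with the standing hypothesis \eqref{PCF2}, Proposition \ref{prop:CasimCase0} applies and gives $H_{\Pi}^{1}(E)\cong H_{\operatorname{dR}}^{1}(B)\oplus \mathcal{A}^{\gamma}/\operatorname{Ham}(E,P)$. Because $\gamma\sim\gamma^{0}$ we have $\mathcal{A}^{\gamma}=\mathcal{A}^{\gamma^{0}}$ by the remark following Proposition \ref{prop:CasimCase0}, and since $\mathcal{A}^{\gamma^{0}}$ is assumed to admit the splitting \eqref{N}, formula \eqref{HH} gives
\[
H_{\Pi}^{1}(E)\cong H_{\operatorname{dR}}^{1}(B)\oplus\frac{\mathcal{A}_{0}^{\gamma^{0}}}{\operatorname{Ham}_{0}(E,P)}.
\]

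It then remains to identify the second summand with $H_{\Upsilon}^{1}(K)$. For the trivial connection $\gamma^{0}$ the horizontal lifts $\operatorname{hor}^{\gamma^{0}}(u)$ are exactly the vector fields $(u,0)$ on $B\times K$. Hence a vertical Poisson vector field $Y$ lies in $\mathcal{A}_{0}^{\gamma^{0}}$ precisely when $[(u,0),Y]=0$ for all $u\in\mathfrak{X}(B)$; writing $Y$ in product coordinates, this forces the components of $Y$ to be independent of $b$, where I crucially use that $B$ is connected. Consequently $Y$ is the pullback of a vector field $w$ on $K$, and the condition $L_{Y}P=0$ translates into $L_{w}\Upsilon=0$. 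This sets up a linear isomorphism $\mathcal{A}_{0}^{\gamma^{0}}\cong\operatorname{Poiss}(K,\Upsilon)$, $Y\mapsto w$, with inverse given by lifting $w$ to the $b$-independent vertical field $(0,w)$.

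Next I would check that this isomorphism carries $\operatorname{Ham}_{0}(E,P)$ exactly onto $\operatorname{Ham}(K,\Upsilon)$. By the description of $\operatorname{Ham}_{0}(E,P)$ recorded just before \eqref{N}, an element has the form $P^{\sharp}dF$ with $L_{\operatorname{hor}^{\gamma^{0}}(u)}F\in\pi_{B}^{\ast}C^{\infty}(B)$; the latter condition says that the $b$-derivatives of $F$ are independent of the $K$-variable, so (again using connectedness of $B$) one can write $F=\pi_{B}^{\ast}\phi+\pi_{K}^{\ast}\psi$. Then $P^{\sharp}d(\pi_{B}^{\ast}\phi)=0$ since $\pi_{B}^{\ast}\phi$ is a Casimir, and $P^{\sharp}dF=P^{\sharp}d(\pi_{K}^{\ast}\psi)$ corresponds under the above isomorphism to $\Upsilon^{\sharp}d\psi\in\operatorname{Ham}(K,\Upsilon)$. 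Conversely, any $\Upsilon^{\sharp}d\psi$ lifts to $P^{\sharp}d(\pi_{K}^{\ast}\psi)\in\operatorname{Ham}_{0}(E,P)$. Passing to quotients therefore gives $\mathcal{A}_{0}^{\gamma^{0}}/\operatorname{Ham}_{0}(E,P)\cong\operatorname{Poiss}(K,\Upsilon)/\operatorname{Ham}(K,\Upsilon)=H_{\Upsilon}^{1}(K)$, which combined with the displayed formula proves \eqref{NN}.

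I expect the main obstacle to lie not in any single hard computation but in the careful bookkeeping of the two identifications with $K$-objects. Specifically, one must ensure that the $b$-independence argument for $\mathcal{A}_{0}^{\gamma^{0}}$ genuinely uses the connectedness of $B$, and that the decomposition $F=\pi_{B}^{\ast}\phi+\pi_{K}^{\ast}\psi$ produces a \emph{global} smooth Hamiltonian $\psi$ on $K$ (obtained by fixing a base point and integrating the vanishing $b$-derivative) rather than merely a local one, so that the correspondence $\operatorname{Ham}_{0}(E,P)\to\operatorname{Ham}(K,\Upsilon)$ is simultaneously well defined, injective, and surjective.
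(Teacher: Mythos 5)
Your proof is correct and follows essentially the same route as the paper's: verify that \eqref{CC} forces \eqref{PC2}, reduce via Proposition \ref{prop:CasimCase0} and the splitting \eqref{N} to formula \eqref{HH}, and then identify $\mathcal{A}_{0}^{\gamma^{0}}\cong\operatorname{Poiss}(K,\Upsilon)$ and $\operatorname{Ham}_{0}(E,P)\cong\operatorname{Ham}(K,\Upsilon)$ using the connectedness of $B$ (your decomposition $F=\pi_{B}^{\ast}\phi+\pi_{K}^{\ast}\psi$ is exactly the paper's subtraction $\tilde{F}(x,y)=F(x,y)-F(x,y^{0})$). The only difference is one of exposition: you make explicit the reduction to \eqref{HH} and the use of $\gamma\sim\gamma^{0}\Rightarrow\mathcal{A}^{\gamma}=\mathcal{A}^{\gamma^{0}}$, which the paper leaves implicit.
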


\begin{proof}
By the connectedness of $B$, it is easy to see that $\mathcal{A}_{0}^{\gamma^{0}}\cong\operatorname{Poiss}(K,\Upsilon)$, where the isomorphism is given by the horizontal lift in $E\overset{\pi_{K}}{\rightarrow}K$ with horizontal distribution $\ker d\pi_{B}$. Moreover, we claim that $\operatorname{Ham}_{0}(E,P)$ $\cong\operatorname{Ham}(K,\Upsilon)$. Indeed, pick a $Y=P^{\sharp}dF\in\operatorname{Ham}_{0}(E,P)$. Since hypothesis \eqref{CC} implies \eqref{PC2}, we conclude that $L_{\operatorname{hor}^{\gamma^{0}}(u)}F\in\pi_{B}^{\ast}C^{\infty}(B)$ for any $u\in\mathfrak{X}(B)$. Then, fixing $y^{0}\in K$, we see that the function $\tilde{F}\in C^{\infty}(E)$ given by $\tilde{F}(x,y)=F(x,y)-F(x,y^{0})$ for $x\in B,y\in K$ is of the form $\tilde{F}=\pi_{K}^{\ast}f$ for a certain $f\in C^{\infty}(K)$. Consequently, $Y$ is $\pi_{N}$-related with the Hamiltonian vector field $\Upsilon^{\sharp}df$.
\end{proof}

\begin{example}
Consider the trivial Poisson bundle $(\pi=\operatorname{pr}_{1}:E=B\times\mathbb{R}^{3},P)$, where the base $B=\mathbb{R}^{1}\times\mathbb{S}^{1}=\{(t,\varphi\operatorname{mod}2\pi)\}$ is the 2-cylinder and the typical fiber $(\mathbb{R}^{3},\Lambda)$ is given by the Lie-Poisson structure \eqref{LA2}. We already know that in this case, the projectability condition \eqref{PC2} holds. It is clear that \eqref{PCF2} is also satisfied. By analyzing equations \eqref{3F}, \eqref{N2}, \eqref{N3}, one can show that decomposition \eqref{N} is true for $\mathcal{A}^{\gamma^{0}}$. In this case, an arbitrary compatible coupling Poisson structure on the trivial Poisson bundle $E$ such that $\gamma\sim\gamma^{0}$ has the form
\begin{align*}
\Pi_{\varrho}  &  =\frac{1}{2\left(1-\Delta_{\varrho}\right)}\left(\frac{\partial}{\partial t} + (\psi\times\frac{\partial\varrho^{(1)}}{\partial x})\cdot\frac{\partial}{\partial x}\right) \wedge \left(\frac{\partial}{\partial\varphi}+(\psi\times\frac{\partial\varrho^{(2)}}{\partial x})\cdot\frac{\partial}{\partial x}\right) \\
&  +\frac{\partial}{\partial x_{1}} \wedge \left(x_{2}\frac{\partial}{\partial x_{2}} + x_{3}\frac{\partial}{\partial x_{3}}\right),
\end{align*}
where $\psi=(0,-x_{3},x_{2})$, $\varrho=\varrho^{(1)}(t,\varphi,x)dt+\varrho^{(2)}(t,\varphi,x)d\varphi$ is an arbitrary horizontal 1-form on $E$ and
\[
\Delta_{\varrho}:=\frac{\partial\varrho^{(2)}}{\partial t}-\frac{\partial\varrho^{(1)}}{\partial\varphi}+\psi(x)\cdot\left(\frac {\partial\varrho^{(1)}}{\partial x} \times \frac{\partial\varrho^{(2)}}{\partial x}\right)
\]
is the Hamiltonian of the curvature of $\gamma$ \cite{Vo-05}. Applying Proposition \ref{prop:CasimCase} to $\Pi_{\varrho}$ and taking into account Example \ref{ex:OpenBook}, we get
\[
H_{\Pi_{\varrho}}^{1}(E)\cong\mathbb{R}\oplus\mathbb{R}^{4}.
\]
\end{example}

\section{Acknowledgements}

The authors are very grateful to Jos\'e A. Vallejo for fruitful discussions and to anonymous Referees for critical comments and useful observations. The research was partially supported by CONACYT under the grant no. 219631.

\bibliographystyle{acm}
\bibliography{OnTheSplitting(ArXiv).bbl}
\end{document}